\definecolor{light-gray}{gray}{0.60}
\definecolor{heavyred}{rgb}{0.75,0,0}
\definecolor{heavygreen}{rgb}{0,0.75,0}
\definecolor{heavyblue}{rgb}{0,0,0.75}
\newcounter{notes}
\declaretheorem[parent=section]{theorem}
\declaretheorem[sibling=theorem]{fact,corollary,proposition,lemma,claim}
\declaretheorem[style=definition,sibling=theorem]{definition,example,remark,question,conjecture}
\numberwithin{equation}{section}
\renewcommand{\leq}{\leqslant}
\renewcommand{\geq}{\geqslant}
\renewcommand{\epsilon}{\varepsilon}
\renewcommand{\phi}{\varphi}
\newcommand{\C}{\mathbb{C}}
\newcommand{\R}{\mathbb{R}}
\newcommand{\N}{\mathbb{N}}
\newcommand{\Z}{\mathbb{Z}}
\DeclareMathOperator{\SL}{SL}
\DeclareMathOperator{\PSL}{PSL}
\DeclareMathOperator{\GL}{GL}
\DeclareMathOperator{\PGL}{PGL}
\DeclareMathOperator{\PO}{PO}
\let\O\relax
\DeclareMathOperator{\O}{O}
\DeclareMathOperator{\Isom}{Isom}
\DeclareMathOperator{\Hom}{Hom}
\DeclareMathOperator{\Proj}{\mathbb{P}}
\newcommand{\RP}{\mathbb{RP}}
\renewcommand{\H}[1][d]{\mathbb{H}^{#1}}
\newcommand{\W}{\mathcal{W}}
\newcommand{\HLP}{\mathcal{HLP}}
\newcommand{\QP}{\mathbb{P}^1(\mathbb{Q})}
\newcommand{\inn}{\mathrm{inn}}
\DeclareMathOperator{\PrimSys}{PSys}
\DeclareMathOperator{\Christoffel}{Ch}
\DeclareMathOperator{\Ax}{Axis}
\DeclareMathOperator{\Lv}{Lv}
\DeclareMathOperator{\tr}{tr}
\DeclareMathOperator{\Perp}{\perp}
\newcommand{\Lc}{\mathcal{L}}
\newcommand{\hem}[2]{{\mathcal H}_{#1}^{#2}}
\newcommand{\ohem}[2]{\mathrm{Int}(\hem{#1}{#2})}
\newcommand{\hex}{\mathcal{H}}
\newcommand{\hyp}[2]{\mathcal{P}_{#1}^{#2}}
\renewcommand{\P}{\mathcal{P}}
\newcommand{\Q}{\mathcal{Q}}
\newcommand{\Cc}{\mathcal{C}}
\newcommand{\geom}[2]{\ell_{#1}^{\textrm{geom}}(#2)}
\newcommand{\trans}[2][\H]{\ell_{#1}(#2)}
\newcommand{\transhex}[1]{\ell(#1)}
\newcommand{\hor}{\mathcal{H}}
\DeclareMathOperator{\Span}{Span}
\newcommand{\side}[2][]{H_{#2}^{#1}}
\DeclareMathOperator{\Out}{Out}
\DeclareMathOperator{\Aut}{Aut}
\DeclareMathOperator{\Inn}{Inn}
\newcommand{\id}{\mathrm{id}}
\newcommand{\tosub}[1]{\xrightarrow[#1]{}}
\DeclarePairedDelimiter\abs{\lvert}{\rvert}
\DeclarePairedDelimiter\norm{\lVert}{\lVert}
\DeclarePairedDelimiter\gen{\langle}{\rangle}
\newcommand{\resp}{resp.~}
\newcommand{\ie}{\emph{i.e.}~}
\newcommand{\Ch}{Ch.~}
\newcommand{\Lm}{Lm.~}
\newcommand{\Thm}{Thm.~}
\newcommand{\Cor}{Cor.~}
\newcommand{\Rmk}{Rmk.~}
\newcommand{\Prop}{Prop.~}
\newcommand{\Exe}{Exercise~}
\newcommand{\pp}{p.~}
\title[Primitive stability and the $Q$-conditions in $\H$]{Primitive stability and the $Q$-conditions for the rank two free group in hyperbolic $d$-space}
\date{\today}
\author[B. Fléchelles]{Balthazar Fléchelles}
\address{Institut Fourier, Université Grenoble Alpes, CS 40700, 38058 Grenoble Cedex 09}
\email{balthazar.flechelles@univ-grenoble-alpes.fr}
\thanks{The author was supported by the ANR-23-CE40-0012 HilbertXfield and the grant NRF-2022R1I1A1A01072169, and received funding from the European Research Council (ERC) under the European Union's Horizon 2020 research and innovation programme (ERC starting grant DiGGeS, grant agreement No 715982, and ERC consolidator grant GeometricStructures, grant agreement No 614733), as well as the Programme Visibilité Scientifique Junior of the Labex Mathématiques Hadamard (2022).} 
\subjclass[2020]{14M35,51M10,20E05,20E36}
\begin{document}

\begin{abstract}
   The two largest known domains of discontinuity for the action of $\Out(F_2)$ on the $\PSL_2(\C)$-character variety of $F_2$ --- defined by Minsky's primitive stability, and Bowditch's $Q$-conditions --- were proven to be equal independently by Lee--Xu and Series. 
   We prove the equivalence between primitive stability and a generalization of the $Q$-conditions for representations of $F_2$ into the isometry group of hyperbolic $d$-space for $d\geq 3$, under some assumptions. In particular, these assumptions are satisfied by all $W_3$-extensible representations.

   We also generalize Lee--Xu's and Series' results concerning the bounded intersection property to higher dimensions after extending their original definition to this setting.
\end{abstract}

\keywords{dynamics on character varieties, hyperbolic geometry, rank two free group, primitive stability, Q-conditions}

\maketitle

\tableofcontents

\section{Introduction}

Let $F_n$ denote the free group on $n$ generators, and $\H$ the real hyperbolic space of dimension $d$. We aim to study the dynamics of the action of the outer automorphism group $\Out(F_n)$ on the character variety $\chi(F_n,\Isom(\H))$. The character variety is formed as the largest Hausdorff quotient of the set $\Hom(F_n,\Isom(\H))/\Isom(\H)$ of representations from $F_n$ into the isometry group $\Isom(\H)$ of $\H$, modulo conjugation at the target. The action of $\Aut(F_n)$ on $\Hom(F_n,\Isom(\H))$ by precomposition yields an action of $\Out(F_n):=\Aut(F_n)/\Inn(F_n)$ on $\chi(F_n,\Isom(\H))$.

The question of finding the largest (if it exists) \emph{domain of discontinuity} for this action, \ie an open invariant subset of $\chi(F_n,\Isom(\H))$ on which the action is properly discontinuous, has proven to be of interest. For a long time, the set of convex cocompact representations was the largest known domain of discontinuity, following works of Fricke and Klein \cite{FrickeKlein1897} for $d=2$ and Bers and Sullivan \cite{Bers1970,Sullivan1981} for $d=3$.

More recently, Tan, Wong and Zhang \cite{TanWongZhang2008GeneralizedMarkoffMaps} and Minsky \cite{Minsky2013} defined domains of discontinuity that are strictly larger than the convex cocompact locus, in various contexts.
They involve the notion of a \emph{primitive element} of $F_n$, \ie a member of a \emph{basis} of $F_n$, that is, a generating set of cardinal $n$.

\subsection{The \texorpdfstring{$Q$}{Q}-conditions}

Tan, Wong and Zhang's domain of discontinuity is a subset of $\chi(F_2,\allowbreak\PSL_2(\C))$, defined by the \emph{$Q$-conditions}. They generalize earlier work of Bowditch who focused on the case of the type preserving representations of the fundamental group of the one-holed torus (see \cite{TanWongZhang2008GeneralizedMarkoffMaps,Bowditch1998}).

\begin{definition}\label{def:QCondH3orig}
   A representation $\rho: F_2\to\PSL_2(\C)$ satisfies the \emph{$Q$-conditions} if
   \begin{enumerate}
      \item $\rho(w)$ is hyperbolic for all primitive elements $w\in F_2$;
      \item the set of conjugacy classes of primitive elements $w\in F_2$ such that $\abs{\tr\rho(w)} \leq 2$ is finite.
   \end{enumerate}
\end{definition}

Observe that the $Q$-conditions can be seen as a weakening of a characterization of convex cocompactness: if we replace the words “primitive elements” by “nontrivial elements” in Definition \ref{def:QCondH3orig}, then work of Goldman \cite{Goldman2003} shows that a representation $\rho: F_2\to\PSL_2(\R)$ is convex cocompact if and only if it satisfies these modified conditions.

Clearly any convex cocompact representation of $F_2$ into $\PSL_2(\C)$ satisfies the $Q$-conditions. On the other hand, the aforementioned work of Goldman \cite{Goldman2003} shows that many non-discrete representations also satisfy the $Q$-conditions. Hence the domain of discontinuity defined by the $Q$-conditions is strictly larger than the convex cocompact locus.

While this definition apparently relies on the target space being $\PSL_2(\C)$, one can try to generalize it to more general representations of $F_2$ by remembering that the trace of a hyperbolic element $A$ of $\PSL_2(\C)$ is closely linked to its \emph{translation length} $\trans[{\H[3]}]{A}:=\inf\{d(o,Ao),o\in\H[3]\}$ in $\H[3]$. Recall that in a proper geodesic Gromov hyperbolic space $X$, an isometry $\gamma\in\Isom(X)$ is \emph{hyperbolic} if and only if its orbits are quasi-geodesics (see \cite[\S 21 \pp 150]{GhysDelaharpe1990}).

\begin{definition}\label{def:BQcond}
   Let $X$ be a proper geodesic Gromov hyperbolic space and $\lambda>0$. A representation $\rho: F_n\to\Isom(X)$ satisfies the \emph{$Q_\lambda$-conditions} if
   \begin{enumerate}
      \item $\rho(w)$ is hyperbolic for all primitive elements $w\in F_n$;
      \item the set of conjugacy classes of primitive elements with translation length $\trans[X]{\rho(w)}\leq\lambda$ is finite.
   \end{enumerate}
\end{definition}

Notice that if we choose $\lambda$ large enough, then the $Q_\lambda$-conditions imply the $Q$-conditions for characters in $\chi(F_2,\PSL_2(\C))$. It turns out the reciprocal also holds by Tan--Wong--Zhang's work on primitive displacement.

\subsection{Primitive displacement}

Tan, Wong and Zhang \cite{TanWongZhang2008GeneralizedMarkoffMaps} showed that the $Q$-conditions are equivalent to \emph{primitive displacement} in $\chi(F_2,\PSL_2(\C))$.

\begin{definition}\label{def:primDisp}
   Let $X$ be a proper geodesic Gromov hyperbolic space. A representation $\rho: F_n\to\Isom(X)$ is \emph{primitive displacing} if there are constants $K,c>0$ such that for all primitive element $w\in F_n$, $\rho(w)$ is hyperbolic, and
   \begin{equation*}
      \trans[X]{\rho(w)}\geq K\norm{w} - c
   \end{equation*}
   where $\norm{w}:=\inf\{\abs{uwu^{-1}},u\in F_n\}$ denotes the reduced word length of $w$.
\end{definition}

Once more, replacing the condition that $w$ is primitive by the condition that $w$ is nontrivial gives the characterization of convex cocompact representations as well-displacing representations given in \cite{DelzantGuichardLabourieMozes2011}. 
We observe that primitive displacement implies the $Q_\lambda$-conditions for any positive $\lambda$, as there are finitely many conjugacy classes of primitive elements of $F_n$ whose reduced length is smaller than any given constant. 

\subsection{Primitive stability}

Although Minsky's domain of discontinuity was introduced later, it is defined in a broader setting than Tan, Wong and Zhang's. The associated condition on representations is \emph{primitive stability}. Recall that any $w\in F_n$ preserves a unique biinfinite geodesic called its axis in the Cayley graph of $F_n$ with the standard set of generators.

\begin{definition}\label{def:primStab}
   Let $X$ be a proper geodesic Gromov hyperbolic space. A representation $\rho:F_n\to \Isom(X)$ is \emph{primitive stable} if there is a point $o\in X$ and constants $K,c>0$ such that, for every primitive $w\in F_2$, the orbit map based at $o$ sends the axis of $w$ to a $(K,c)$-quasi-geodesic.
\end{definition}

Note that replacing the condition “primitive” with “nontrivial” gives the classical characterization of convex cocompactness in terms of quasi-isometric embeddings. Hence the primitive stable locus clearly contains the convex cocompact locus. In fact, Minsky showed that for any $d\geq 3$, the primitive stable locus in $\chi(F_n,\allowbreak\Isom(\H))$ is strictly larger than the set of convex cocompact representations.

This always defines a domain of discontinuity in $\chi(F_n,\Isom(X))$, as proven by Minsky \cite{Minsky2013} for rank $1$ symmetric spaces, and in the general Gromov hyperbolic case by Remfort-Aurat in \cite{RemfortAurat2023}. 

\subsection{Known implications}

The fact that the $Q$-conditions, primitive stability and primitive displacement can all be interpreted as weakenings of convex cocompactness by “restricting to primitive elements” seems to indicate that these conditions might be related. In fact, it is elementary to observe that primitive stability implies primitive displacement, which implies the $Q_\lambda$-conditions for any $\lambda>0$ (see Proposition \ref{prop:PSinPDinBQ}). It is natural to ask whether the $Q_\lambda$-conditions and primitive stability are equivalent.

In his thesis, Lupi \cite{Lupi2015} showed that the $Q$-conditions are equivalent to primitive stability in $\chi(F_2,\PSL_2(\R))$. Lee and Xu \cite{LeeXu2020}, Series \cite{Series2019,Series2020} and Schlich \cite{Schlich2022} all proved that primitive displacement and primitive stability are equivalent, in $\chi(F_2,\PSL_2(\C))$ for Lee--Xu and Series, and in the general Gromov hyperbolic case for Schlich.

Observe that in $\chi(F_2,\PSL_2(\C))$, the result of Lee, Xu and Series, together with the proof by Tan, Wong and Zhang of the equivalence between primitive displacement and the $Q$-conditions yields the equivalence between the $Q$-conditions and primitive stability. In particular, if $\lambda$ is large enough, then the $Q_\lambda$-conditions are equivalent to primitive stability in $\chi(F_2,\PSL_2(\C))$.

\subsection{Results}

This leaves open the more general question of whether primitive stability and the $Q_\lambda$-conditions are equivalent in $\chi(F_n,\Isom(X))$, where $n\geq 2$ and $X$ is a proper geodesic Gromov hyperbolic space. In this paper, we focus on the special case where $n=2$ and $X=\H$ for $d\geq 3$. In the proofs of Lee--Xu and Series, they implicitly use the hypothesis that their representations are \emph{Coxeter extensible}. It is indeed a classical fact that all actions of $F_2$ on $\H[3]$ are Coxeter extensible (see \cite{Fenchel1989,Goldman2009}), though this fails in higher dimensions (see Remark \ref{rmk:CoxeterExtensibleRepsDimEstimate}). Recall from Mühlherr \cite{Muehlherr1997} that if $W_3 = \gen{x,y,z\mid x^2,y^2,z^2}$ is the universal Coxeter group of rank $3$, then the morphism $\tau: F_2=\gen{a,b}\to W_3$ sending $a$ to $xy$ and $b$ to $yz$ is an embedding.

\begin{definition}\label{def:CoxExtRep}
   A representation $\rho:F_2\to\Isom(\H)$ is \emph{Coxeter extensible} if there is a representation $\rho':W_3\to\Isom(\H)$ such that $\rho = \rho'\circ\tau$.
\end{definition}

\begin{theorem}[see Theorem \ref{thm:PSisPDisBQ}]\label{thm:PSisBQCox}
   Let $\rho: F_2\to\Isom(\H)$ be a Coxeter extensible representation. There exists $\lambda >0$ such that $\rho$ is primitive stable if and only if it satisfies the $Q_\lambda$-conditions.
\end{theorem}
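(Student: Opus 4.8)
The plan is to prove Theorem \ref{thm:PSisBQCox} by establishing the chain
\[
\text{primitive stable} \;\Longrightarrow\; \text{primitive displacing} \;\Longrightarrow\; Q_\lambda \;\Longrightarrow\; \text{primitive stable},
\]
where the first two implications hold for all representations (Proposition \ref{prop:PSinPDinBQ}), and only the last implication uses Coxeter extensibility. For that implication, fix a Coxeter extension $\rho' : W_3 \to \Isom(\H)$ with $\rho = \rho' \circ \tau$, and suppose $\rho$ satisfies the $Q_\lambda$-conditions for a suitably large $\lambda$ (to be pinned down in the course of the argument). I would first transfer the problem to $W_3$: primitive elements of $F_2$ correspond under $\tau$ to a distinguished family of elements of $W_3$ (the images $\tau$ of primitives, which can be described combinatorially via the action of $\Out(F_2)$, and which are all conjugate into the ``alternating'' words in $x,y,z$), and the axis of a primitive $w \in F_2$ in the Cayley graph of $F_2$ maps quasi-isometrically to the axis of $\tau(w)$ in the Cayley graph of $W_3$ with generators $\{x,y,z\}$, since $\tau$ is a quasi-isometric embedding onto its image. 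So it suffices to show that the orbit map of $\rho'$, restricted to the axes of the relevant elements of $W_3$, is uniformly quasi-geodesic.

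The heart of the matter is a local-to-global principle for quasi-geodesics in $\H$: a path in $\H$ which is a concatenation of geodesic segments meeting at definite angles bounded away from $0$ and $\pi$, with segment lengths bounded below, is a uniform quasi-geodesic (this is the classical ``broken geodesics'' / CAT($-1$) criterion; alternatively one can use a local-to-global criterion for quasi-geodesics in hyperbolic spaces applied to long enough subwords). Thus I would study, for each generator pair, the geometry of the ``fans'' of hyperplanes/half-spaces associated to the involutions $\rho'(x), \rho'(y), \rho'(z)$. Concretely: each $\rho'(s)$ for $s \in \{x,y,z\}$ is a (possibly parabolic or elliptic) isometry, but the $Q_\lambda$-conditions force $\rho(xy)=\rho'(x)\rho'(y)$, $\rho(yz)$, $\rho(xz)$ and more generally $\rho'(w)$ for all alternating $w$ of bounded length coming from primitives to be loxodromic with translation length $> \lambda$ except for finitely many conjugacy classes. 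One then shows that a long alternating word $s_1 s_2 \cdots s_m$ (the $s_i$ alternating among two or three of the generators, reflecting the structure of a primitive word in $F_2$) has the property that the successive ``mirrors'' are pairwise ``sufficiently separated'' — which is exactly what the lower bound on translation lengths of the length-$\leq N$ subwords encodes (via the standard dictionary: $\ell(\rho'(s_i s_{i+1} \cdots s_{i+k}))$ large $\iff$ the corresponding Coxeter-reflection hyperplanes are far apart or strongly transverse). Feeding this into the broken-geodesic criterion yields the quasi-geodesic property of the orbit image of the axis, hence primitive stability.

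The main obstacle, and where I expect the real work to be, is \textbf{upgrading the finiteness/quantitative input of the $Q_\lambda$-conditions into a uniform geometric separation statement along every primitive axis simultaneously}. The $Q_\lambda$-conditions only say that \emph{all but finitely many} conjugacy classes of primitives have translation length $> \lambda$; the finitely many exceptions must be handled separately, and one must show that along a single (long) primitive axis the ``bad'' subwords cannot accumulate or conspire — this is where one needs an argument in the spirit of Bowditch's and Tan–Wong–Zhang's combinatorics on the Farey/primitive tree (the tree of primitive elements of $F_2$, on which $\Out(F_2) \cong \GL_2(\Z)$ acts), showing that a sequence of short-translation-length primitives along an axis would force infinitely many conjugacy classes of primitives of bounded translation length. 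The Coxeter extensibility is what makes the geometry tractable: it realizes the abstract combinatorial axis as an honest alternating word in a Coxeter group, so that the reflection-hyperplane picture is available and the local-to-global criterion can be applied with constants depending only on $\lambda$ and the exceptional finite set. I would also need to verify carefully that a single $\lambda$ works — i.e., choose $\lambda$ large enough that (a) the $Q_\lambda$-conditions imply loxodromicity is ``robust'' along subwords and (b) the hyperbolicity constant of $\H$ and the quasi-geodesic constants produced are uniform — and that the resulting basepoint $o$ (e.g. a point near the ``center'' of the fundamental domain of $\rho'(W_3)$, or a point minimizing displacement) can be chosen independently of the primitive.
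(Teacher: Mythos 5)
Your easy chain (primitive stable $\Rightarrow$ primitive displacing $\Rightarrow$ $Q_\lambda$) is exactly Proposition \ref{prop:PSinPDinBQ}, and your decision to make the Coxeter extension do the work only in the converse direction is also how the paper proceeds. But for that converse your outline leaves the core unproven, and the one concrete mechanism you do propose does not work. You want to cut (the alternating image of) a primitive word into bounded-length subwords, argue that the $Q_\lambda$-conditions make the translation lengths of these subwords large, convert that into separation of the corresponding mirrors, and conclude by a broken-geodesic criterion. The $Q_\lambda$-conditions, however, constrain \emph{only primitive} conjugacy classes, and factors of primitive words are in general neither primitive nor powers of primitives: for instance $bababb$ is a factor of the Christoffel word $(ab)^3b$, is cyclically equivalent to $abab^3$, and has abelianization $(2,4)$, so it is not primitive and not a proper power, and no hypothesis of the theorem says anything about $\trans{\rho(bababb)}$. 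Hence your ``standard dictionary'' step ($\ell$ of subwords large $\iff$ mirrors separated) has no input. Moreover, the step you yourself flag as ``where the real work is'' --- upgrading ``all but finitely many primitive classes are long'' into uniform separation along \emph{every} primitive axis, with the exceptional classes controlled --- is precisely the content of the whole proof, and the proposal gives no argument for it beyond an appeal to Bowditch/Tan--Wong--Zhang-style combinatorics.

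For comparison, the paper never decomposes a primitive word into subwords. It uses the Christoffel/Farey structure: every primitive of level at least $N$ is a \emph{positive} word in the two letters of one of the finitely many level-$N$ Farey edges. To each Farey edge $(x,y)$ it attaches the right-angled hexagon $\hex(X,Y)$ furnished by the Coxeter extension, and the half-length property (automatic here by Proposition \ref{prop:CoxExtHasHLP}) says the hexagon side lengths are close to $\tfrac12\trans{X}$, $\tfrac12\trans{Y}$, $\tfrac12\trans{XY}$ --- and $x$, $y$, $xy$ \emph{are} primitive, so the $Q_\lambda$-conditions do control them. Uniformity is then obtained by a descent on the Farey tree rather than along a single axis: if infinitely many Farey edges were badly configured (``non-narrow''), approximate additivity of hexagon side lengths would force the translation lengths of the basis elements to drop by a definite amount $2\epsilon$ at each step of a path in the tree, contradicting the $Q_\lambda$-conditions (Propositions \ref{prop:LnRn} and \ref{prop:finitenessNonAcute}, via Lemmas \ref{lm:hypNotInt} and \ref{lm:transLength}); this is the quantitative replacement for the accumulation argument you gesture at. Finally, narrow high-level edges satisfy the nesting property $\mathcal{O}$ for a single hyperplane (Proposition \ref{prop:narrowImplyPropO}), and the ordered, pairwise disjoint translates of that hyperplane along the axis of any positive word in the basis give the bound $m\,d_e(u,v)-c\leq d(\rho(u)o,\rho(v)o)$ directly (Theorem \ref{thm:highLv}); no broken-geodesic or angle-control criterion is needed. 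To salvage your outline you would have to supply a statement playing the role of Proposition \ref{prop:finitenessNonAcute}; as written, the proposal assumes it.
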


\begin{remark}
   Theorem \ref{thm:PSisPDisBQ} is stated in a more general setting by replacing the assumption that $\rho$ is Coxeter extensible by a weaker condition which we call the half-length property (see Section \ref{sec:HLP} for the definition).
\end{remark}

\begin{remark}\label{rmk:precisionMainThmCox}
   In Theorem \ref{thm:PSisBQCox}, $\lambda$ depends on the choice of $\rho$. More precisely, we show in Theorem \ref{thm:PSisPDisBQ} that we can choose $\lambda = \Lc_d([\rho])$, where $\Lc_d:\chi(F_2,\Isom(\H))\to \R_{>0}$ is an $\Out(F_2)$-invariant function that depends only on the \emph{primitive systole} of $\rho$
   \begin{equation*}
      \PrimSys([\rho]):=\inf\{\trans{\rho(x)}, \text{$x\in F_2$ primitive}\}
   \end{equation*}
   Since this quantity is invariant under the action of $\Aut(F_2)$, the set of characters $[\rho]$ of $\chi(F_2,\Isom(\H))$ satisfying the $Q_{ \Lc_d([\rho])}$-conditions indeed defines a $\Out(F_2)$-invariant subset of $\chi(F_2,\Isom(\H))$.
    In fact, $\Lc_d([\rho])\to + \infty$ as $\PrimSys([\rho])\to 0$, hence the smaller the primitive systole of $\rho$, the larger $\lambda$ needs to be in Theorem \ref{thm:PSisBQCox}.
\end{remark}

\begin{remark}
   Instead of considering Coxeter extensible representations of $F_2$, we may as well consider representations of $W_3$. We say that $\rho: W_3\to\Isom(\H)$ is primitive stable or satisfies the $Q_\lambda$-conditions if $\rho\circ\tau$ does. As a consequence of classical results by Mühlherr \cite{Muehlherr1997}, one can check that the primitive stable locus is a domain of discontinuity for the action of $\Out(W_3)$ on $\chi(W_3,\Isom(\H))$ which is strictly larger than the convex cocompact locus (see Proposition \ref{prop:linkW3F2CharVar}). In this setting, Theorem \ref{thm:PSisBQCox} says that the image in $\chi(W_3,\Isom(\H))$ of the set of representations $\rho$ satisfying the $Q_{ \Lc_d([\rho\circ\tau])}$-conditions coincides with the primitive stable locus.
\end{remark}

\begin{remark}
   Theorem \ref{thm:PSisBQCox} recovers Schlich's result when $X=\H$ and $d\geq 3$, as well as Lee, Xu and Series' result and part of Tan, Wong and Zhang's proof of the equivalence between the $Q$-conditions and primitive displacement in $\chi(F_2,\PSL_2(\C))$.
\end{remark}

Interestingly, Lee, Xu and Series all introduced a new $\Out(F_2)$-invariant subset of $\chi(F_2,\PSL_2(\C))$ which contains the primitive stable locus. This subset, defined by the \emph{bounded intersection property} (see Definition \ref{def:BIP}), can also be seen as a weakening by “restricting to the primitive elements” of a characterization of convex cocompactness in $\chi(F_2,\PSL_2(\C))$ due to Gilman and Keen \cite{GilmanKeen2009}. It is unknown whether this defines a domain of discontinuity, or even if it is strictly larger than the primitive stable locus. We are able to generalize their definition and results to $\chi(F_2,\Isom(\H))$ for $d\geq 3$.

\begin{theorem}\label{thm:PSinBIP}
   Primitive stable representations $F_2\to\Isom(\H)$ satisfy the bounded intersection property. Moreover, discrete and faithful representations that have the bounded intersection property satisfy the $Q_\lambda$-conditions for any $\lambda>0$.
\end{theorem}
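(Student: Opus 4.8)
The plan is to prove the two assertions separately, in both cases translating between the combinatorics of primitive elements of $F_2$ --- organised by the Farey graph and continued fractions --- and the coarse geometry of $\H$ via its Gromov hyperbolicity.

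For the first assertion (primitive stability implies the bounded intersection property), I will start from a primitive stable $\rho$ with basepoint $o$ and constants $K,c$. Since $\H$ is Gromov hyperbolic, the Morse lemma yields $R=R(K,c,d)$ such that for every primitive $x$ the $\rho$-orbit of the Cayley-graph axis of $x$ stays within Hausdorff distance $R$ of the geodesic $\Ax(\rho(x))$; in particular $\rho(x)$ is loxodromic and its axis uniformly fellow-travels the orbit of the combinatorial axis. I will then feed in the relevant combinatorial fact about $F_2$: along a path in the Farey graph the combinatorial axes of the corresponding primitive elements (and of the conjugates appearing in Definition \ref{def:BIP}) overlap in segments of controlled length and diverge linearly once the slopes are far apart. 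Fellow-travelling then turns these bounds into the uniform bound on the coarse intersections of the geodesics $\Ax(\rho(x))$ that the bounded intersection property demands. The one point to check carefully is that the coarse intersection quantity of Definition \ref{def:BIP} is stable under replacing an orbit-quasi-geodesic by the geodesic it shadows and under Hausdorff-bounded perturbations, which is precisely what hyperbolicity of $\H$ supplies.

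For the second assertion (a discrete, faithful $\rho$ with the bounded intersection property satisfies the $Q_\lambda$-conditions for every $\lambda>0$), I will first note that discreteness forces every elliptic element of $\rho(F_2)$ to have finite order, and faithfulness then excludes elliptics, so $\rho(x)$ is loxodromic or parabolic for every $x\neq1$. To get condition (1), suppose some primitive $x$ had $\rho(x)$ parabolic with fixed point $\xi\in\partial\H$; choosing $y$ with $\{x,y\}$ a basis, the elements $x^n y$ are primitive, pairwise non-conjugate, and their axes $\Ax(\rho(x^n y))$ accumulate on a geodesic ending at $\xi$, so this family of axes --- together with the translates under $\langle\rho(x)\rangle$ --- penetrates arbitrarily deep into the cusp at $\xi$ while remaining mutually close, forcing the coarse intersection numbers among primitive axes to be unbounded and contradicting Definition \ref{def:BIP}. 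For condition (2), I will argue by contradiction: if for some $\lambda$ there were infinitely many conjugacy classes $[x_n]$ of primitives with $\trans{\rho(x_n)}\leq\lambda$, then, since discreteness allows only finitely many loxodromic elements of $\rho(F_2)$ of translation length $\leq\lambda$ whose axis meets a fixed ball, no compact region of $\H$ can meet a $\rho(F_2)$-translate of every $\Ax(\rho(x_n))$; choosing translates that do escape to infinity and using that an axis has both endpoints in the limit set, the convex core must be noncompact and these axes must accumulate either on a parabolic fixed point or inside a geometrically infinite end, where they again pile up and contradict the uniform bound of Definition \ref{def:BIP}.

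The first assertion is essentially a soft consequence of the Morse lemma once the (dimension-independent) combinatorics of primitive axes in $F_2$ is in place; the real difficulty is in the second, and its crux is the last step --- showing that the only configurations that can obstruct the $Q_\lambda$-conditions, namely a primitive parabolic or infinitely many uniformly short primitive geodesics, force the coarse intersection data of Definition \ref{def:BIP} to be unbounded. In $\H[3]$ this could be approached through isometric spheres and a Gilman--Keen type intersection algorithm, but for $d\geq4$ neither those complex-analytic tools nor general tameness of the quotient is available, so the argument will have to be run directly with axes and horoballs and carefully with respect to the exact coarse intersection quantity of Definition \ref{def:BIP}; making it uniform in both $d$ and $\lambda$ is the main obstacle I expect.
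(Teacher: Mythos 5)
There is a genuine gap, and it starts with the definition you are working towards. In this paper the bounded intersection property (Definition \ref{def:BIP}) is \emph{not} a bound on coarse intersections between pairs of primitive axes: it asks for a single constant $M$ such that $d(o_X,\Ax\rho(w))\leq M$ for each $X\in\{A,L,B\}$ and every $X$-palindromic primitive $w$, where $o_L=\side{L}\cap\Ax A$, $o_A=\side{A}\cap\Ax AB$, $o_B=\side{B}\cap\Ax B$ are three fixed points coming from the hexagon $\hex(A,B)$. Your first-assertion plan (overlap lengths of combinatorial axes along the Farey graph, linear divergence of slopes, stability of a coarse-intersection quantity) is aimed at a different statement and never produces the required bound on $d(o_X,\Ax\rho(w))$. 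The actual argument is much softer and hinges on one combinatorial observation you do not use: palindromic words are cyclically reduced, so by Proposition \ref{prop:AxisInF2} the identity lies on the combinatorial axis of $w$; primitive stability plus the Morse lemma then puts $\Ax\rho(w)$ within a uniform distance $D$ of the basepoint $o$, hence within $D+\max_X d(o,o_X)$ of each $o_X$. (Conditions (i) and (ii) of Definition \ref{def:BIP} come from Propositions \ref{prop:PSinPDinBQ} and \ref{prop:BQisIrr}.)

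The same misreading derails the second assertion. Condition (1) of the $Q_\lambda$-conditions needs no argument at all, since Definition \ref{def:BIP} already requires primitive elements to have loxodromic images, so your parabolic-exclusion step is superfluous (and, as written, rests on the nonexistent ``coarse intersection numbers''). For condition (2) the correct argument is short and avoids everything you flag as problematic in $d\geq 4$: if infinitely many conjugacy classes of primitives had $\trans{\rho(w)}\leq\lambda$, infinitely many of their Christoffel representatives would be, say, $L$-palindromic; for each such $w$ the bounded intersection property gives a point $n\in\Ax\rho(w)$ with $d(o_L,n)\leq M$, whence $d(o_L,\rho(w)o_L)\leq 2M+\lambda$, so infinitely many distinct elements of $\rho(F_2)$ (distinct by faithfulness) move $o_L$ a bounded amount, contradicting discreteness. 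Your alternative route through convex cores, cusps and geometrically infinite ends is both unnecessary and, as you yourself note, not available in higher dimensions; the step you identify as ``the main obstacle'' disappears once the definition is read correctly.
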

\begin{remark}
   Series \cite{Series2019,Series2020} proves that the $Q$-conditions imply the bounded intersection property. Her proof relies both on the existence of right-angled hexagons and associated trigonometric formulae and on the equivalence of the $Q$-conditions with primitive displacement in $\chi(F_2,\PSL_2(\C))$ proven in \cite{TanWongZhang2008GeneralizedMarkoffMaps}. Hence, it may be possible to generalize her proof to get that primitive displacement implies the bounded intersection property for Coxeter extensible representations, or perhaps even for representations satisfying the half-length property.
\end{remark}

The proof of Theorem \ref{thm:PSisBQCox} is an adaptation of the proof of Lee and Xu. The main ingredient of their proof and ours is the geometry of right-angled hexagons in $\H$, which are defined for Coxeter extensible representations. This is why representations are assumed to be Coxeter extensible in Theorem \ref{thm:PSisBQCox}. We are able to weaken that condition and work with hexagons that are much less regular a priori, allowing to ask for the weaker half-length property instead.

One apparent obstacle is that they make heavy use of trigonometric formulae for right-angled hexagons in $\H[3]$ in a crucial lemma. Such formulae are classical for $d\leq 3$, and Tan, Wong and Zhang \cite{TanWongZhang2012} extended them to $d=4$, but nothing is known in higher dimensions. We are able to circumvent this issue by considering instead the large scale geometry of our hexagons.

The paper is organized as follows. In Section \ref{sec:primElem}, we briefly recall the useful facts about primitive elements of $F_2$ and make the link between the action of the outer automorphism groups of $F_2$ and $W_3$ on their respective character varieties. In Section \ref{sec:HLP}, we introduce the half-length property and show that all Coxeter extensible representations satisfy it. In Section \ref{sec:PSinBQ}, we prove and recall the elementary implications in the proof of Theorem \ref{thm:PSisBQCox}, and present our main result: Theorem \ref{thm:PSisPDisBQ}. In Section \ref{sec:proofOfConverse}, we prove the last implication of our main theorem. In Section \ref{sec:BIP}, we briefly recall and discuss Lee, Xu and Series' definition of the bounded intersection property, and we prove Theorem \ref{thm:PSinBIP}. In Section \ref{sec:furtherQuestions}, we give a few directions at which further research may be aimed.

\subsection*{Acknowledgments}

The author warmly thanks Anna Wienhard for welcoming him in her team under the excellent supervision of Gye-Seon Lee, for much of this work happened during this stay. 
We are grateful to Konstantinos Tsouvalas for his help in proving a lemma, and to Jaejeong Lee, Anne Parreau and Suzanne Schlich for extensive discussions and help in proofreading. Finally, the author would like to express his deepest gratitude to Fanny Kassel and Gye-Seon Lee for taking the time to discuss the different stages of this paper, and without whom it may have never been.

\section{Automorphisms and primitive elements}\label{sec:primElem}

In this section, we study the structure of the (outer) automorphism groups of $F_2$ and $W_3$ in order to show how to build domains of discontinuity for the action of $\Out(W_3)$ on $W_3$-characters by using existing domains of discontinuity for the action of $\Out(F_2)$ on $F_2$-characters. We also derive from this study the main results about primitive elements of $F_2$ that we will need in this paper.

Much of this discussion was inspired by Section $2$ and paragraphs A.1 and A.2 of \cite{LeeXu2020}. The main results are from \cite{Nielsen1917} and \cite{Muehlherr1997}.

\subsection{Primitive elements, bases and automorphisms}

We choose and fix a preferred basis $e=(a,b)$ of $F_2$ for the rest of the paper. We recall some of the definitions given in the introduction.

\begin{definition}
An element $u\in F_2$ is said to be \emph{primitive} if there is a $v\in F_2$ such that the pair $(u,v)$ generates $F_2$.

   Such a pair is called a \emph{basis} of $F_2$.
\end{definition}
\begin{remark}
Observe that if $u\in F_2$ is primitive, then so are $u^{-1}$ and $wuw^{-1}$ for every $w\in F_2$.
\end{remark}

Primitive elements will play a central role in the rest of this paper, and a careful study of them is necessary. It is easier to deal with bases, as a single primitive element is part of infinitely many bases (even modulo simultaneous conjugation).

It is useful to observe that bases of $F_2$ are in bijection with automorphisms of $F_2$. Indeed, the image of $e=(a,b)$ under an automorphism is always a basis, and conversely, there is a unique automorphism sending $e$ to a given basis. In his seminal paper, Nielsen gives a precise description of the automorphisms of $F_2$.

\begin{fact}[\cite{Nielsen1917}]\label{fact:Nielsen}
   The group $\Aut(F_2)$ is generated by the automorphisms $S$, $I$, $L$ and $R$ of $F_2$ respectively sending $(a,b)$ to
   \begin{align*}
      &(b,a)&&(a^{-1},b)&&(a,ab)&&(ab,b)
   \end{align*}
\end{fact}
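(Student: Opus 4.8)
The plan is to prove this via the classical \emph{Nielsen reduction method}, translated into the language of ordered bases. Recall that ordered bases $(u,v)$ of $F_2$ are in bijection with automorphisms, via $(u,v)\leftrightarrow\psi$ with $\psi(a)=u$ and $\psi(b)=v$, and that applying an \emph{elementary Nielsen transformation} to a basis (swap the two entries; invert one entry; replace one entry $x_i$ by $x_ix_j^{\pm1}$ or by $x_j^{\pm1}x_i$, $j\neq i$) corresponds under this bijection to right-multiplying the automorphism by a fixed element of $\Aut(F_2)$. The four automorphisms $S,I,L,R$ are elementary Nielsen transformations, and a short bookkeeping check shows that the subgroup $G:=\gen{S,I,L,R}$ in fact realizes \emph{all} elementary Nielsen transformations: inverting the second entry is $SIS$, one has $L^{-1}=ILI$ as operations on pairs and similarly $R^{-1}\in G$, and conjugating by $S$ interchanges the roles of the two entries. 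So it suffices to prove that any basis $(u,v)$ can be brought to the standard basis $(a,b)$ by a finite sequence of elementary Nielsen transformations; feeding this back through the bijection, if $\phi\in\Aut(F_2)$ then $\phi\cdot g=\id$ for some $g\in G$ (an automorphism being determined by the image of $(a,b)$), hence $\phi=g^{-1}\in G$.

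First I would establish the length-reduction lemma: \emph{if $(u,v)$ is a basis of $F_2$ with $\abs{u}+\abs{v}\geq 3$, then some elementary Nielsen transformation yields a basis of strictly smaller total reduced length.} Granting this, one induces on $\abs{u}+\abs{v}$: repeated reduction brings $(u,v)$ to a basis with $\abs{u}+\abs{v}=2$, and such a basis must be $\{a^{\pm1},b^{\pm1}\}$ in one of the eight possible orders-and-signs, because the abelianized pair must be a $\Z$-basis of $\Z^2$, which excludes $\{a^{\pm1},a^{\pm1}\}$ and $\{b^{\pm1},b^{\pm1}\}$; each of these eight bases is reached from $(a,b)$ by a word in $S$ and $I$, a finite check. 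Crucially, elementary transformations preserve the generated subgroup, so every intermediate pair is genuinely a basis.

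To prove the lemma, suppose that no elementary transformation decreases $\abs{u}+\abs{v}$; this is exactly the statement that $\{u,v\}$ is \emph{Nielsen-reduced}. By the standard cancellation estimates for Nielsen-reduced subsets of a free group, a nonempty reduced word in $\{u^{\pm1},v^{\pm1}\}$ of syllable length $k$ represents an element of reduced word length $\geq k$ in $F_2$. Since $(u,v)$ is a basis, each of $a$ and $b$ is expressible as such a reduced word; as $\abs{a}=\abs{b}=1$, these words have syllable length $1$, so $a,b\in\{u^{\pm1},v^{\pm1}\}$, forcing $\{u,v\}=\{a^{\pm1},b^{\pm1}\}$ and $\abs{u}+\abs{v}=2$, contradicting the hypothesis. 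The main obstacle is precisely this input about Nielsen-reduced sets — the cancellation lemma and its consequence that a length-minimal generating pair has the above form — while the rest is routine. An alternative that sidesteps the subgroup machinery uses the surjection $\Aut(F_2)\to\GL_2(\Z)$: since $\GL_2(\Z)$ is generated by the images of $S$, $I$ and $L$, after multiplying by an element of $G$ one may assume $\phi$ acts trivially on $\Z^2$, i.e. $\phi\in\Inn(F_2)$, and then one checks $\gamma_a,\gamma_b\in G$ directly (for instance $\gamma_a$ sends $(a,b)$ to $(a,ab)$ to $(a,aba^{-1})$); this still requires the classical identification of $\ker(\Aut(F_2)\to\GL_2(\Z))$ with $\Inn(F_2)$, so the reduction argument is the more self-contained option.
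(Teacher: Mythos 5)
The paper does not prove this statement at all: it is quoted as a classical fact with a citation to Nielsen's 1917 paper, so there is no in-paper argument to compare against. Your proposal is the standard Nielsen-reduction proof of that classical fact, and its architecture is sound: the dictionary between ordered bases and automorphisms, the verification that $\gen{S,I,L,R}$ realizes all elementary Nielsen transformations under right composition (your computations $SIS\colon (u,v)\mapsto(u,v^{-1})$ and $ILI\colon (u,v)\mapsto(u,u^{-1}v)$ are correct), the induction on $\abs{u}+\abs{v}$, and the finite check at total length $2$ all work; the alternative route through $\GL_2(\Z)$ and $\Inn(F_2)$ is also fine but, as you say, needs Fact \ref{fact:NielsenKernel} as input. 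The one point to tighten is in your length-reduction lemma: the hypothesis ``no elementary Nielsen transformation decreases $\abs{u}+\abs{v}$'' gives conditions (N0)--(N1) but is not literally the definition of a Nielsen-reduced set, since condition (N2) (no element cancelled entirely in a product $u_1u_2u_3$) is in general arranged only after further \emph{length-preserving} transformations chosen by a tie-breaking order. The cancellation estimate you invoke (syllable length bounds word length from below) uses (N2), so as written the step ``length-minimal $\Rightarrow$ Nielsen-reduced'' has a small hole. The routine repair is to state the classical result in its usual form --- every finite subset of a free group can be carried by elementary Nielsen transformations, without increasing total length, to a Nielsen-reduced set --- and run your syllable-length argument on that reduced pair to conclude it equals $\{a^{\pm1},b^{\pm1}\}$, which is all the induction needs. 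With that adjustment the proof is complete and is exactly the argument the cited reference is standing in for.
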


These four automorphisms are called the \emph{Nielsen transformations} of $F_2$. A pair of elements of $F_2$ is a basis if and only if it can be reached by a finite sequence of Nielsen transformations.

Recall from Definition \ref{def:CoxExtRep} the morphism $\tau: F_2\to W_3$ defined by $\tau(a) = xy$ and $\tau(b) = yz$. This is a well-behaved embedding by work of Mühlherr.

\begin{fact}[\cite{Muehlherr1997}]\label{fact:Muehlherr}
   $\tau$ is an embedding whose image is characteristic in $W_3$. The induced morphism $\iota : \phi\in\Aut(W_3)\to\phi|_{\tau(F_2)}\in\Aut(F_2)$ is an embedding.
\end{fact}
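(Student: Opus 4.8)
The plan is to assemble this statement from Mühlherr's results on automorphisms of universal Coxeter groups together with a direct computation showing that $\iota$ has trivial kernel. First I would recall the structure of $\Aut(W_3)$: by Mühlherr \cite{Muehlherr1998} (building on the normal form theory for free products of finite groups), every automorphism of $W_3 = \gen{x,y,z \mid x^2, y^2, z^2}$ is generated by the permutations of the three generators $\{x,y,z\}$ together with the partial conjugations — the automorphism fixing two generators, say $y$ and $z$, and sending the third generator $x$ to $wxw^{-1}$ for some $w \in \gen{y,z}$. The key structural input is that $\tau(F_2)$, being the kernel of the surjection $W_3 \to (\Z/2)$ sending each generator to $1$ (equivalently, the set of even-length words), is the unique index-two subgroup of $W_3$, hence characteristic: any automorphism of $W_3$ preserves parity of word length and therefore restricts to an automorphism of $\tau(F_2) \cong F_2$.

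Next I would construct the map $\iota$ and check it is a homomorphism, which is immediate from the fact that restriction of automorphisms to a characteristic subgroup is functorial. The content is injectivity. Suppose $\phi \in \Aut(W_3)$ restricts to the identity on $\tau(F_2) = \gen{xy, yz}$. Then $\phi$ fixes $xy$, $yz$, and hence also $xz = (xy)(yz)^{-1}\cdot(\text{using } (yz)^{-1}=zy)$, more carefully $\phi$ fixes every even word; in particular it fixes $xy$ and $xz$. Now $\phi(x)$ is a conjugate of one of $x, y, z$ (automorphisms of free products of finite groups permute the conjugacy classes of the free factors), so write $\phi(x) = w s w^{-1}$ with $s \in \{x,y,z\}$. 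From $\phi(xy) = xy$ I get $\phi(y) = \phi(x)^{-1} \cdot xy = w s w^{-1} x y$, and this must be a conjugate of a generator as well; combining with the analogous constraint coming from $\phi(xz) = xz$ (or from $\phi(yz)=yz$) pins down $w$ to lie in the appropriate factor and forces $s = x$, $\phi(y)=y$, $\phi(z)=z$, so $\phi = \id$. I expect this to reduce to a short finite case-check using the normal form in the free product $\Z/2 * \Z/2 * \Z/2$.

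The main obstacle is the injectivity of $\iota$: one must rule out a nontrivial automorphism of $W_3$ that happens to act trivially on the index-two subgroup. The cleanest route is to observe that $W_3$ is generated by $\tau(F_2)$ together with any single generator, say $x$, since $y = x \cdot (xy)^{-1}\cdot x \cdot(xy)$... more simply $W_3 = \tau(F_2) \cup x\tau(F_2)$, so an automorphism of $W_3$ fixing $\tau(F_2)$ pointwise is determined by $\phi(x)$, and $\phi(x) \in x\tau(F_2)$ must satisfy $\phi(x)^2 = 1$; the only element of $x\tau(F_2)$ of order dividing $2$ that also conjugates $\gen{xy,yz}$ trivially is $x$ itself. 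Verifying that last clause — that no element $x v$ with $v \in \tau(F_2)$, $v \neq 1$, is an involution commuting with the correct action — is the crunchy part; I would do it by noting $(xv)^2 = xvxv = 1$ forces $v^{-1} = xvx$, i.e. $v$ is inverted by conjugation by $x$, and then using that $\phi|_{\tau(F_2)} = \id$ together with $\phi$ being an automorphism forces $v$ to centralize $\tau(F_2)$, which is centerless, giving $v = 1$. The remaining assertions of the Fact (that $\tau$ is an embedding and that its image is characteristic) are then either quoted directly from \cite{Muehlherr1998} or follow from the parity argument above.
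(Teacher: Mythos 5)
The paper itself offers no proof of this Fact --- it is quoted directly from M\"uhlherr --- so any self-contained argument is a bonus; and indeed the core of yours, the injectivity of $\iota$ in your final paragraph, is correct and complete: if $\phi$ is the identity on $\tau(F_2)$, then writing $\phi(x)=xv$ with $v\in\tau(F_2)$ and applying $\phi$ to $xux^{-1}\in\tau(F_2)$ for arbitrary $u\in\tau(F_2)$ forces $v$ to centralize $\tau(F_2)$; since $\tau(F_2)\cong F_2$ is centerless and $v\in\tau(F_2)$, you get $v=1$ and $\phi=\id$ on $W_3=\gen{\tau(F_2),x}$. (Your middle paragraph's case-check via $\phi(x)=wsw^{-1}$ is left vague, but it is superseded by this cleaner argument, so no harm done.)

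There is, however, a genuine error in the step you lean on for everything else: $\tau(F_2)$ is \emph{not} the unique index-two subgroup of $W_3$. The abelianization of $W_3$ is $(\Z/2\Z)^3$, so $W_3$ has seven distinct index-two subgroups (e.g.\ the kernel of the map killing $y$ and $z$ but not $x$), and "unique index two, hence characteristic" fails as stated. Since characteristicity is exactly what you need both for $\iota$ to be defined and for the coset decomposition $\phi(x)\in x\tau(F_2)$ in your injectivity argument, this must be repaired. The standard fix uses an ingredient you already invoke elsewhere: every torsion element of a free product of copies of $\Z/2\Z$ is conjugate into a factor, so any automorphism sends each of the involutions $x,y,z$ to a conjugate of a generator, which has odd parity; hence every automorphism commutes with the parity homomorphism $W_3\to\Z/2\Z$, whose kernel is $\tau(F_2)$ (a one-line check shows $\gen{xy,yz}$ is all of the even subgroup, since $xz=(xy)(yz)$, etc.), and $\tau(F_2)$ is therefore characteristic. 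With that substitution, and either quoting M\"uhlherr for the injectivity of $\tau$ or noting that the even subgroup is free of rank two (Nielsen--Schreier/Euler characteristic) so that the surjection $\tau$ is an isomorphism by Hopficity, your argument goes through; the opening appeal to generation of $\Aut(W_3)$ by permutations and partial conjugations is then unnecessary.
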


\begin{remark}
   Mühlherr's theorem is more general, as it deals with $\tau: F_n\to W_{n+1}$, proving that it is an embedding onto a characteristic subgroup which induces an embedding $\iota: \Aut(W_{n+1})\to\Aut(F_n)$ of the automorphism groups. He also gives a presentation of $\Aut(W_{n+1})$.
\end{remark}

Fact \ref{fact:Muehlherr} allows to write the short exact sequence
\begin{equation*}
   0\to F_2\overset{\tau}{\to} W_3\to \Z/2\Z\to 0
\end{equation*}
where the right morphism sends $x$, $y$ and $z$ to $1\in\Z/2\Z$. Since the morphism $\Z/2\Z\to W_3$ sending $1$ to $x$ is a right splitting of this sequence, we have
\begin{equation}\label{eq:W3SDPF2}
   W_3 = \tau(F_2)\rtimes \gen{x}
\end{equation}
This can also be seen by observing that $y = x\tau(a)$ and $z = x\tau(ab)$.

Observe that as for $F_2$, automorphisms of $W_3$ are uniquely determined by the image of the canonical basis. Hence $\Aut(W_3)$ is in bijection with the set of bases of $W_3$, \ie triplets of involutions $(x',y',z')\in W_3^3$ that generate $W_3$.

It is not difficult to see that each one of the Nielsen transformations of $F_2$ is induced by the action of an automorphism of $W_3$. As in Fact \ref{fact:Nielsen}, we represent an automorphism $\phi$ of $W_3$ by the basis $(\phi(x),\phi(y),\phi(z))$. The automorphisms given by the bases
\begin{align*}
   &(yzy,y,yxy)&&(yxy,y,z)&&(xyx,x,z)&&(x,z,zyz)
\end{align*}
respectively induce on $\tau(F_2)\simeq F_2$ the Nielsen transformations $S$, $I$, $L$ and $R$. Together with Fact \ref{fact:Nielsen}, this implies that $\iota$ is in fact an isomorphism. We collect this fact in the following proposition.

\begin{proposition}\label{prop:AutF2W3Same}
   The morphism $\iota:\Aut(W_3)\to\Aut(F_2)$ induced by the restriction to the image of $\tau$ is an isomorphism. In particular, we have, for any $\phi\in\Aut(F_2)$
   \begin{equation*}
      \tau\circ\phi = \iota^{-1}(\phi)\circ\tau
   \end{equation*}
\end{proposition}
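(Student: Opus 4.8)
The plan is to combine the injectivity of $\iota$, which is already furnished by Fact~\ref{fact:Muehlherr}, with a direct surjectivity argument. Since by Fact~\ref{fact:Nielsen} the group $\Aut(F_2)$ is generated by the four Nielsen transformations $S$, $I$, $L$, $R$, it suffices to produce a preimage under $\iota$ of each of them, and the four triples displayed just above the statement are the natural candidates. So for each of these triples I would check two things: first, that it is a \emph{basis} of $W_3$, i.e.\ a triple of involutions generating $W_3$, so that it corresponds to a genuine automorphism of $W_3$ via the bijection between $\Aut(W_3)$ and bases of $W_3$ recalled above; and second, that this automorphism of $W_3$ restricts on $\tau(F_2)$ to the claimed Nielsen transformation. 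The "basis" check is uniform: each entry of each triple is a conjugate of one of $x$, $y$, $z$, hence an involution, and conjugating the two outer entries by the middle one recovers $\{x,y,z\}$, so the triple generates $W_3$. The restriction check is a one-line computation using only $x^2=y^2=z^2=1$ together with $\tau(a)=xy$, $\tau(b)=yz$, $\tau(ab)=xz$ and $\tau(a)^{-1}=yx$; for instance the automorphism $\psi$ with $\psi(x)=yzy$, $\psi(y)=y$, $\psi(z)=yxy$ sends $\tau(a)=xy$ to $yzy\cdot y=yz=\tau(b)$ and $\tau(b)=yz$ to $y\cdot yxy=xy=\tau(a)$, so that $\iota(\psi)=S$; the three other cases are identical in nature.

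Once $S$, $I$, $L$, $R$ are all exhibited in the image of $\iota$, that image is a subgroup of $\Aut(F_2)$ containing a generating set, hence equals $\Aut(F_2)$; combined with injectivity this shows $\iota$ is an isomorphism. For the displayed identity, fix $\phi\in\Aut(F_2)$ and put $\psi:=\iota^{-1}(\phi)\in\Aut(W_3)$. Since $\tau(F_2)$ is characteristic in $W_3$ (again Fact~\ref{fact:Muehlherr}), $\psi$ preserves $\tau(F_2)$, and by the very definition of $\iota$ the restriction $\psi|_{\tau(F_2)}$, transported to $F_2$ along the embedding $\tau$, equals $\phi$. Unwinding this identification means precisely $\psi\circ\tau=\tau\circ\phi$, that is $\iota^{-1}(\phi)\circ\tau=\tau\circ\phi$.

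There is no genuine obstacle here: the argument is bookkeeping built on results already in hand. The two points deserving a little care are the verification that each of the four triples really does generate $W_3$ (so that it defines an automorphism in the first place), and keeping straight the identification of $\Aut(\tau(F_2))$ with $\Aut(F_2)$ via $\tau$ when unpacking the meaning of $\iota$; both are dispatched by the remarks above. One could alternatively organize the surjectivity step around the semidirect product decomposition $W_3=\tau(F_2)\rtimes\gen{x}$ from \eqref{eq:W3SDPF2}, but checking the four explicit bases is the shortest route.
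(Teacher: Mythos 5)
Your proposal is correct and follows essentially the same route as the paper: injectivity comes from Mühlherr's result, and surjectivity is obtained by checking that the same four explicit bases of $W_3$ define automorphisms restricting to the Nielsen transformations $S$, $I$, $L$, $R$ on $\tau(F_2)$, after which the displayed identity $\tau\circ\phi=\iota^{-1}(\phi)\circ\tau$ is just the unwinding of the definition of $\iota$. Your verifications (involutions, generation, and the one-line computations such as $yzy\cdot y=yz$) match the paper's intended argument, and in fact spell out details the paper leaves implicit.
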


\subsection{(Outer) automorphism groups}\label{par:autGps}

Observe that the abelianization morphism $\pi_e:F_2\to\Z^2$ defined by $\pi_e(a) = (1,0)$ and $\pi_e(b)=(0,1)$ induces a morphism
\begin{equation*}
   (\pi_e)_\ast:\begin{cases}
      \Aut(F_2)&\to\GL(2,\Z) \\
      \phi&\mapsto (\pi_e(w)\in\Z^2\mapsto\pi_e\circ\phi(w))
   \end{cases}
\end{equation*}
It is elementary to check that $(\pi_e)_\ast$ is well-defined once we observe that $\pi_e(w)$ is the sum of the exponents of the letters $a$ and $b$ when writing $w$ in the basis $e=(a,b)$. Note that an automorphism $\phi\in\Aut(F_2)$ is sent to the matrix $(\pi_e\circ\phi(a),\pi_e\circ\phi(b))$, where the two vectors $\pi_e\circ\phi(a)$ and $\pi_e\circ\phi(b)$ form the columns of the matrix.

Moreover, since $\GL_2(\Z)$ is generated by
\begin{align*}
   (\pi_e)_\ast(I) &=\begin{pmatrix}
      -1 & 0 \\ 0 & 1
   \end{pmatrix}&
   (\pi_e)_\ast(L)&=\begin{pmatrix}
      1 & 1 \\ 0 & 1
   \end{pmatrix}&
   (\pi_e)_\ast(R)&=\begin{pmatrix}
      1 & 0 \\ 1 & 1
   \end{pmatrix}
\end{align*}
$(\pi_e)_\ast$ is surjective. Nielsen also computes the kernel of $(\pi_e)_\ast$.

\begin{fact}[\cite{Nielsen1917}]\label{fact:NielsenKernel}
   The kernel of $(\pi_e)_\ast$ is $\Inn(F_2)$.
\end{fact}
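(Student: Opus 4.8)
The plan is to prove the two inclusions separately. The inclusion $\Inn(F_2)\subseteq\ker(\pi_e)_\ast$ is immediate: for $u\in F_2$ one has $\pi_e(uwu^{-1})=\pi_e(w)$ for every $w\in F_2$, since conjugation becomes trivial after abelianizing, so $(\pi_e)_\ast(\inn_u)=\id$. For the reverse inclusion I would use two classical facts from Nielsen's analysis of $F_2$:
\textbf{(A)} the only conjugacy class of primitive elements of $F_2$ with abelianization $(1,0)\in\Z^2$ is that of $a$; and
\textbf{(B)} if $(a,y)$ is a basis of $F_2$, then $y=a^{i}ba^{j}$ or $y=a^{i}b^{-1}a^{j}$ for some $i,j\in\Z$.
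Granting these, let $\phi\in\ker(\pi_e)_\ast$. Then $\phi(a)$ is primitive and $\pi_e(\phi(a))=(\pi_e)_\ast(\phi)\cdot(1,0)=(1,0)$, so by (A) there is $g\in F_2$ with $\phi(a)=gag^{-1}$. Since $\ker(\pi_e)_\ast$ is a subgroup containing $\Inn(F_2)$, we may replace $\phi$ by $\inn_{g^{-1}}\circ\phi\in\ker(\pi_e)_\ast$ (if this is shown to be inner, so is $\phi$), and thus assume $\phi(a)=a$. Now $(a,\phi(b))$ is a basis, so by (B) we have $\phi(b)=a^{i}b^{\pm1}a^{j}$; comparing abelianizations, $\pi_e(\phi(b))=(i+j,\pm1)$ must equal $(0,1)$, which forces the sign to be $+$ and $j=-i$. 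Hence $\phi(b)=a^{i}ba^{-i}=\inn_{a^{i}}(b)$ and $\phi(a)=a=\inn_{a^{i}}(a)$, so $\phi=\inn_{a^{i}}\in\Inn(F_2)$.

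The real content, and the step I expect to be the main obstacle, is establishing facts (A) and (B), which are the genuinely combinatorial input. Both follow from Nielsen's cancellation method: given a pair of elements generating $F_2$ (possibly required to contain a prescribed primitive element), one applies a length-non-increasing sequence of elementary Nielsen transformations to reach a Nielsen-reduced generating pair, and one checks that a Nielsen-reduced basis of $F_2$ must consist of two of the letters $a^{\pm1},b^{\pm1}$ — because in a free group a word of reduced syllable length $k$ over a Nielsen-reduced set has word length at least $k$, so a Nielsen-reduced pair containing an element of length $\geq 2$ generates a subgroup with at most two elements of length $1$ and hence cannot be all of $F_2$. Carefully tracking the effect of the Nielsen transformations on the pair of generators themselves (not merely on their abelianizations, which is not enough) then yields (B), and the analogous bookkeeping for a pair containing a given primitive element yields (A).

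Alternatively — since Fact~\ref{fact:NielsenKernel} is exactly the statement that $(\pi_e)_\ast$ descends to an isomorphism $\Out(F_2)\to\GL_2(\Z)$ — one may instead argue via the commutator $[a,b]$: using Fact~\ref{fact:Nielsen}, one checks on the generators $S,I,L,R$ that every $\psi\in\Aut(F_2)$ sends $[a,b]$ to a conjugate of $[a,b]^{\pm1}$, with the exponent equal to $\det((\pi_e)_\ast\psi)$ (the map $\psi\mapsto$ this exponent is a well-defined homomorphism since $[a,b]$ is not conjugate to its inverse in $F_2$). Then a given $\phi\in\ker(\pi_e)_\ast$ satisfies $\phi([a,b])=v[a,b]v^{-1}$, and after twisting by $\inn_{v}^{-1}$ one reduces to $\phi([a,b])=[a,b]$; one finishes by showing that an automorphism fixing $[a,b]$ and acting trivially on $H_1(F_2)$ is a power of $\inn_{[a,b]}$. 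This last step again rests on the same kind of cancellation analysis (or, equivalently, on a computation with the lower central series of $F_2$ together with residual nilpotence), so the crux is the same in either approach.
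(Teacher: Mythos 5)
The paper does not actually prove this statement: it is recorded as a Fact and attributed to Nielsen's 1917 paper, with no argument given, so there is no in-paper proof to compare yours against. Your proposal is a correct reconstruction of the classical argument. The easy inclusion $\Inn(F_2)\subseteq\ker(\pi_e)_\ast$ is right, and your deduction of the reverse inclusion from (A) and (B) is valid: normalizing $\phi(a)=a$ by an inner twist, invoking (B) to write $\phi(b)=a^ib^{\pm1}a^j$, and pinning down $\pm=+$, $j=-i$ by the homology condition does give $\phi=\inn_{a^i}$. You are also right to flag (A) (primitive elements of $F_2$ are determined up to conjugacy by their image in $\Z^2$) and (B) (the mates of $a$ are exactly $a^ib^{\pm1}a^j$) as the genuine content; both are true theorems of Nielsen, and your Nielsen-reduction sketch — a Nielsen-reduced generating pair of $F_2$ must consist of two letters — is the standard route, although the ``careful tracking'' of the elementary transformations needed to extract (A) and (B) from it is precisely where the nontrivial combinatorics lives, so as written these remain citations-in-disguise rather than proofs. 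One caution on your alternative route: the closing step there, that an automorphism fixing $[a,b]$ and acting trivially on $H_1(F_2)$ is a power of $\inn_{[a,b]}$, is essentially the statement $\ker(\pi_e)_\ast=\Inn(F_2)$ itself restricted to the stabilizer of $[a,b]$ (once one knows the kernel is $\Inn(F_2)$, the intersection with the stabilizer is $\gen{\inn_{[a,b]}}$ because centralizers in a free group are cyclic), so unless that step is proved independently (e.g.\ via the Magnus/free-calculus proof that $IA(F_2)=\Inn(F_2)$), the second approach risks circularity — as you yourself acknowledge, its crux is the same. Net: your first approach, with (A) and (B) either proved in full or cited to Nielsen, is a sound proof of the Fact, and in fact supplies more detail than the paper, which simply cites \cite{Nielsen1917}.
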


It follows that $(\pi_e)_\ast$ induces an isomorphism
\begin{equation}\label{eq:OutF2Isom}
   \Out(F_2)\simeq\GL_2(\Z)
\end{equation}

We also want to compute $\Out(W_3)$, and relate it to $\Out(F_2)$. For any $u\in W_3$, let $\inn_u\in\Aut(W_3)$ denote the conjugation by $u$. Observe that \eqref{eq:W3SDPF2} translates to
\begin{equation*}
   \iota(\Inn(W_3)) = \Inn(F_2) \rtimes \gen{\iota(\inn_x)}
\end{equation*}
since $\iota\circ\inn: W_3\to\Aut(F_2)$ is an embedding by Fact \ref{fact:Muehlherr}. The fact that $\inn_x(xy) = yx = \tau(a^{-1})$ and $\inn_x(yz) = xyzx = xyzyyx=\tau(ab^{-1}a^{-1})$ implies that $\iota(\inn_x)$ is the automorphism of $\Aut(F_2)$ given by the basis $(a^{-1},ab^{-1}a^{-1})$. It follows that $(\pi_e)_\ast\circ\iota(\inn_x) = -\id$.

Therefore, if we let $p:\GL_2(\Z)\to\PGL_2(\Z)$ be the quotient map, we have $\ker(p\circ (\pi_e)_\ast\circ \iota) = \Inn(W_3)$, so that $p\circ(\pi_e)_\ast\circ\iota$ induces an isomorphism
\begin{equation}\label{eq:OutW3Isom}
   \Out(W_3)\simeq\Out(\tau(F_2))/\gen{[\inn_x]}\simeq\PGL_2(\Z)
\end{equation}
where $[\inn_x]$ denotes the class of $\inn_x|_{\tau(F_2)}\in\Aut(\tau(F_2))$ in $\Out(\tau(F_2))$.

\subsection{Conjugated actions on character varieties}

In this paragraph, we let $G$ denote any group. We use the results of the previous paragraphs to establish the link between domains of discontinuity for the action of $\Out(W_3)$ on $\chi(W_3,G)$ and of $\Out(F_2)$ on $\chi(F_2,G)$.

We have a natural continuous map $\tau^\ast:\chi(W_3,G)\to\chi(F_2,G)$ induced by the map $\rho\mapsto\rho\circ\tau$ for any $\rho : W_3\to G$. By Proposition \ref{prop:AutF2W3Same}, $\tau^\ast$ is $\iota^{-1}$-equivariant: for any $\phi\in\Aut(F_2)$ and $[\rho]\in\chi(W_3,G)$, $\phi\cdot\tau^\ast([\rho]) = \tau^\ast(\iota^{-1}(\phi)\cdot [\rho])$. Hence, we have for any $[\phi]\in\Out(F_2)$:
\begin{equation}
   [\phi]\cdot\tau^\ast([\rho]) = \tau^\ast(\iota^\ast([\phi])\cdot [\rho]) \label{eq:tauAstIsIotaAstEquiv}
\end{equation}
where $\iota^\ast:\Out(F_2)\to\Out(\tau(F_2))$ is the isomorphism induced by $\iota^{-1}$, and $\Out(\tau(F_2))$ acts on $\chi(W_3,G)$ via the projection $\Out(\tau(F_2))\to\Out(\tau(F_2))/\gen{[\inn_x]}\simeq\Out(W_3)$ by Equation \eqref{eq:OutW3Isom}.

Hence $\tau^\ast$ conjugates the action of $\Out(F_2)$ on $\chi(F_2,G)$ and the action of $\Out(\tau(F_2))$ on $\chi(W_3,G)$. This allows to build domains of discontinuity on $\chi(W_3,G)$ from domains of discontinuity on $\chi(F_2,G)$.

\begin{proposition}\label{prop:linkW3F2CharVar}
   Let $G$ be a group, and $\W\subset\chi(F_2,G)$ be a domain of discontinuity for the action of $\Out(F_2)$. Then $(\tau^\ast)^{-1}(\W)\subset\chi(W_3,G)$ is a domain of discontinuity for the action of $\Out(W_3)$.
\end{proposition}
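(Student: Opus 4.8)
The plan is to transport the properly discontinuous action through the conjugacy established just above. Recall that $\tau^\ast:\chi(W_3,G)\to\chi(F_2,G)$ is a continuous embedding which, by the displayed equivariance formula, conjugates the $\Out(W_3)$-action on $\chi(W_3,G)$ (via the isomorphism $\Out(W_3)\simeq\Out(\tau(F_2))$ coming from Proposition \ref{prop:AutF2W3Same}, see also \eqref{eq:OutW3Isom}) to the action of the subgroup $\iota^\ast(\Out(F_2))\leq\Out(\tau(F_2))$ on the image $\tau^\ast(\chi(W_3,G))\subset\chi(F_2,G)$. Here $\iota^\ast:\Out(F_2)\to\Out(\tau(F_2))$ is an isomorphism by Proposition \ref{prop:AutF2W3Same}, so the image subgroup is all of $\Out(\tau(F_2))$; this is the crucial point that makes the argument work rather than merely giving proper discontinuity for a subgroup. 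Concretely, $(\tau^\ast)^{-1}(\W)$ is, by continuity of $\tau^\ast$, an open subset of $\chi(W_3,G)$, and it is $\Out(W_3)$-invariant because $\W$ is $\Out(F_2)$-invariant and $\tau^\ast$ intertwines the two actions.

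First I would spell out openness and invariance of $(\tau^\ast)^{-1}(\W)$: openness is immediate from continuity of $\tau^\ast$ and openness of $\W$; invariance follows from $[\phi]\cdot\tau^\ast([\rho]) = \tau^\ast(\iota^\ast([\phi])\cdot [\rho])$, since $\iota^\ast$ is a bijection $\Out(F_2)\to\Out(\tau(F_2))$ and the latter group is identified with $\Out(W_3)$. Second, I would verify proper discontinuity directly from the definition: fix a point $[\rho]\in(\tau^\ast)^{-1}(\W)$; we must produce a neighborhood $U$ of $[\rho]$ in $\chi(W_3,G)$ such that $\{g\in\Out(W_3): gU\cap U\neq\emptyset\}$ is finite. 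Since $\tau^\ast([\rho])\in\W$ and $\Out(F_2)$ acts properly discontinuously on $\W$, there is an open neighborhood $V$ of $\tau^\ast([\rho])$ in $\W$ with $\{[\phi]\in\Out(F_2): [\phi]V\cap V\neq\emptyset\}$ finite. Take $U := (\tau^\ast)^{-1}(V)$, an open neighborhood of $[\rho]$ contained in $(\tau^\ast)^{-1}(\W)$. If $g\in\Out(W_3)$ satisfies $gU\cap U\neq\emptyset$, pick $[\sigma]\in U$ with $g[\sigma]\in U$; applying the injective map $\tau^\ast$ and the conjugacy relation gives $\iota^\ast{}^{-1}(g)$ translating $\tau^\ast([\sigma])\in V$ into $V$ (here I use that $g$ corresponds under $\Out(W_3)\simeq\Out(\tau(F_2))$ to some element in the image of $\iota^\ast$), so $\iota^\ast{}^{-1}(g)$ lies in the finite set above. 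As $\iota^\ast$ is injective, the set of such $g$ is finite, as desired.

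The only genuinely delicate point — and the step I expect to require the most care in writing out — is the bookkeeping of the two isomorphisms $\Out(W_3)\simeq\Out(\tau(F_2))$ and $\iota^\ast:\Out(F_2)\xrightarrow{\sim}\Out(\tau(F_2))$, together with the fact that $\tau^\ast$ lands inside $\chi(F_2,G)$ only as the $\iota^\ast(\Out(F_2))$-invariant subset $\tau^\ast(\chi(W_3,G))$, not all of $\chi(F_2,G)$. One must be careful that proper discontinuity of $\Out(F_2)$ on the (possibly non-invariant-looking, but in fact invariant) subset $V\subset\W$ is enough: since $V$ is an arbitrary open subset of $\W$ and $\W$ itself is $\Out(F_2)$-invariant with properly discontinuous action, the displacement set $\{[\phi]:[\phi]V\cap V\neq\emptyset\}$ is automatically finite. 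Everything else is formal manipulation with continuous maps and group isomorphisms; no analytic or geometric input beyond Proposition \ref{prop:AutF2W3Same} and the conjugacy statement preceding the proposition is needed.
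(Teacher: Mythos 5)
Your overall strategy is the paper's: transport proper discontinuity through the $\iota^\ast$-equivariant embedding $\tau^\ast$, with openness and invariance of $(\tau^\ast)^{-1}(\W)$ coming for free. But the step you yourself single out as "the crucial point" contains an error: you repeatedly identify $\Out(W_3)$ with $\Out(\tau(F_2))$ "coming from Proposition \ref{prop:AutF2W3Same}, see also \eqref{eq:OutW3Isom}". That identification is false, and \eqref{eq:OutW3Isom} says the opposite: $\Out(W_3)\simeq\Out(\tau(F_2))/\gen{[\inn_x]}\simeq\PGL_2(\Z)$, whereas $\Out(\tau(F_2))\simeq\Out(F_2)\simeq\GL_2(\Z)$. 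The point is that $\Aut(W_3)\simeq\Aut(\tau(F_2))$ via $\iota$, but $\Inn(W_3)$ is strictly larger than the inner automorphisms coming from $\tau(F_2)$ (it contains $\inn_x$), so $\Out(W_3)$ is only an order-$2$ quotient of $\Out(\tau(F_2))$. As written, both your invariance argument and your proper-discontinuity argument invoke a correspondence "$g\in\Out(W_3)\leftrightarrow$ element of $\Out(\tau(F_2))$ in the image of $\iota^\ast$" that does not exist as an isomorphism.

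The gap is repairable, and the repair is exactly the paper's closing step, which your write-up omits: the equivariance formula gives proper discontinuity of the $\Out(\tau(F_2))$-action on $\W_0=(\tau^\ast)^{-1}(\W)$, and this action factors through the quotient $\Out(W_3)$ (because $\inn_x$ is inner in $W_3$, hence acts trivially on $\chi(W_3,G)$); since the kernel $\gen{[\inn_x]}$ is finite (of order $2$), invariance and proper discontinuity descend to $\Out(W_3)$. Concretely, in your displacement-set argument one should lift $g\in\Out(W_3)$ to either of its two preimages $\tilde g\in\Out(\tau(F_2))$, note that $\tilde g$ acts on $U$ exactly as $g$ does, conclude that $(\iota^\ast)^{-1}(\tilde g)$ lies in the finite set for $V$, and finally observe that the set of admissible $g$ is the image of a finite set under the $2$-to-$1$ quotient map, hence finite. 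With that bookkeeping corrected, your proof coincides with the paper's (which phrases the same verification more compactly by saying that $\tau^\ast|_{\W_0}$ is an $\iota^\ast$-equivariant homeomorphism onto its image).
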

\begin{proof}
   Note that $\W_0 = (\tau^\ast)^{-1}(\W)$ is open in $\chi(W_3,G)$ since $\W$ is open. Let $K\subset\W_0$ be a compact subset, and suppose that $[\phi]\in\Out(F_2)$ satisfies $\iota^\ast([\phi])\cdot K\cap K\neq\varnothing$. Then by Equation \eqref{eq:tauAstIsIotaAstEquiv}, we have after applying $\tau^\ast$ that $[\phi]\cdot\tau^\ast(K)\cap\tau^\ast(K)\neq\varnothing$. Since $\Out(F_2)$ acts properly discontinuously on $\W$ and $\tau^\ast(K)\subset\W$ is compact, there are finitely many such $[\phi]\in\Out(F_2)$, hence $\Out(\tau(F_2)) = \iota^\ast(\Out(F_2))$ also acts properly discontinuously on $\W_0$.

   Since by \eqref{eq:OutW3Isom}, $\Out(W_3)$ is a quotient of order $2$ of $\Out(\tau(F_2))$, we see that $\Out(W_3)$ also acts properly discontinuously on $\W$.
\end{proof}

\subsection{The Farey tessellation}

We now recall the definition of the Farey tessellation, and we explain its link with the (extended) conjugacy classes of primitive elements and bases of $F_2$. The \emph{extended conjugacy class} of a basis $(x,y)$ (\resp primitive element $x$) of $F_2$ is the set of bases (\resp primitive elements) of $F_2$ that are conjugated to an element of the orbit under $\gen{S,I}$ of $(x,y)$ (\resp to $x$ or $x^{-1}$). We denote by $[x,y]$ (\resp $[x]$) the extended conjugacy class of $(x,y)$ (\resp $x$).

The Farey tessellation of $\H[2]$ is the tessellation of $\H[2]$ whose vertices are the points of $\QP\subset\RP\simeq\partial\H[2]$ and with edges the biinfinite geodesics with endpoints $\{\sfrac p q,\sfrac{p'}{q'}\}$ such that $pq'-p'q = \pm 1$. It defines a tessellation of $\H[2]$ by ideal triangles (see \cite[\Ch 1]{Hatcher2022} for a detailed introduction). Fact \ref{fact:NielsenKernel} allows to define a bijection between extended conjugacy classes of primitive elements (\resp bases) of $F_2$ and the vertices (\resp edges) of the Farey tessellation. We will denote by $E$ (\resp $V$) the edges (\resp vertices) of the Farey tessellation, also called \emph{Farey edges}.

\begin{proposition}\label{prop:ECCofPrimBasisFareyEdgeCorrespondance}
   Let $\psi$ be the map sending a primitive element $u$ of $F_2$ to the ratio $\sfrac p q\in\QP = V$ where $(p,q)=\pi_e(u)$. It induces a bijection $\Psi$ between the set of extended conjugation classes of primitive elements of $F_2$ and $V$.

   Similarly, let $\tilde{\psi}$ be the map sending a basis $(u,v)$ of $F_2$ to the pair $\{\psi(u),\psi(v)\}$. Then $\tilde{\psi}$ takes values in $E$, and induces a bijection $\tilde{\Psi}$ between extended conjugacy classes of bases of $F_2$ and the set $E$ of Farey edges.
\end{proposition}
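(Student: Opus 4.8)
The plan is to build the correspondence for primitive elements first, then bootstrap it to bases. First I would check that $\phi$ is well-defined as a map on extended conjugacy classes: if $u$ is primitive then $\pi_e(u) = (p,q)$ is a \emph{primitive} vector of $\Z^2$ (i.e. $\gcd(p,q)=1$), since $(\pi_e)_\ast$ is surjective onto $\GL_2(\Z)$ by Fact \ref{fact:NielsenKernel} and the companion discussion, so $u$ being part of a basis forces $\pi_e(u)$ to be part of a $\Z$-basis of $\Z^2$; hence $\sfrac pq \in \QP$ makes sense. Replacing $u$ by $u^{-1}$ negates $\pi_e(u)$, and replacing $u$ by a conjugate $vuv^{-1}$ leaves $\pi_e(u)$ unchanged since $\Z^2$ is abelian; so $\phi$ descends to a map $\Phi$ on extended conjugacy classes. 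Conversely, every primitive vector $(p,q)$ of $\Z^2$ is $\pi_e(u)$ for some primitive $u$ (lift a $\GL_2(\Z)$-matrix with first column $(p,q)$ through the surjection $(\pi_e)_\ast$), which gives surjectivity of $\Phi$.

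The heart of the matter is injectivity of $\Phi$: if $u, u'$ are primitive with $\pi_e(u) = \pm\pi_e(u')$, I must show they lie in the same extended conjugacy class, i.e. $u'$ is conjugate to $u$ or $u^{-1}$. After possibly replacing $u'$ by its inverse, assume $\pi_e(u) = \pi_e(u')$. Complete $u$ to a basis $(u,w)$; the corresponding automorphism $\psi \in \Aut(F_2)$ has $(\pi_e)_\ast(\psi)$ equal to the matrix with columns $\pi_e(u), \pi_e(w)$. Now $u'$ is also primitive, so it is $\psi'(a)$ for some $\psi' \in \Aut(F_2)$, and the hypothesis says the first columns of $(\pi_e)_\ast(\psi)$ and $(\pi_e)_\ast(\psi')$ agree. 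Then $(\pi_e)_\ast(\psi^{-1}\psi')$ fixes $(1,0)$, hence is upper triangular unipotent times a possible $\pm 1$ in the bottom right — concretely it lies in the subgroup generated by $\begin{pmatrix} 1 & 1 \\ 0 & 1\end{pmatrix}$ and $\begin{pmatrix} 1 & 0 \\ 0 & -1 \end{pmatrix}$. By the isomorphism $\Out(F_2) \simeq \GL_2(\Z)$ of \eqref{eq:OutF2Isom}, $\psi^{-1}\psi'$ equals an inner automorphism composed with one representing such a matrix; but the subgroup of $\Out(F_2)$ fixing the conjugacy class $[a]$ — equivalently the automorphisms sending $a$ to a conjugate of $a^{\pm 1}$ — is precisely $\gen{[L], [I']}$ for the appropriate elementary transformations, which surjects onto that stabilizer in $\GL_2(\Z)$. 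Hence $\psi^{-1}\psi'$ sends $a$ to a conjugate of $a^{\pm 1}$, i.e. $u' = \psi'(a)$ is a conjugate of $u^{\pm 1} = \psi(a^{\pm 1})$'s conjugate, giving $[u] = [u']$. I expect this matching of the $\GL_2(\Z)$-stabilizer of a primitive vector with the $\Out(F_2)$-stabilizer of an extended conjugacy class to be the main obstacle, and the cleanest route is to invoke the explicit Nielsen generators from Fact \ref{fact:Nielsen} rather than argue abstractly.

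For the statement about bases, I would first note $\tilde\phi$ is well-defined on extended conjugacy classes: the generators $S$ and $I$ of the relevant group act on $(\pi_e(u), \pi_e(v))$ by swapping the two vectors and by negating the first (or equivalently by $\pm 1$ ambiguities), neither of which changes the unordered pair $\{\phi(u), \phi(v)\}$; and conjugation again does nothing. Surjectivity onto $E$: a Farey edge is a pair $\{\sfrac pq, \sfrac{p'}{q'}\}$ with $pq' - p'q = \pm 1$, i.e. the vectors $(p,q), (p',q')$ form a $\Z$-basis of $\Z^2$; lifting the corresponding $\GL_2(\Z)$-matrix through $(\pi_e)_\ast$ produces a basis $(u,v)$ of $F_2$ with $\tilde\phi(u,v)$ the given edge. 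Injectivity follows from the primitive case together with the observation that the edge $\{\phi(u),\phi(v)\}$ determines the pair $\{\pi_e(u), \pi_e(v)\}$ up to sign of each vector, hence determines $\{[u],[v]\}$, and then a basis of $F_2$ is determined up to the $\gen{S,I}$-action and conjugation by the unordered pair of extended conjugacy classes of its members — this last point uses that two bases sharing the same (unordered, up to inversion) pair of primitive elements differ by an element of $\gen{S,I}$ and inner automorphisms, which one reads off again from $\Out(F_2) \simeq \GL_2(\Z)$ since the stabilizer in $\GL_2(\Z)$ of an unordered basis-pair-up-to-signs is exactly the image of $\gen{S,I}$. Finally I would record the compatibility: $\tilde\Phi$ sends the extended conjugacy class of a basis to the Farey edge whose two endpoints are the $\Phi$-images of the classes of its two members, so the two bijections fit together as the vertex and edge maps of an isomorphism onto the Farey tessellation, as desired.
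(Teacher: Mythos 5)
Your proof is correct and follows essentially the same route as the paper: everything is pushed through the abelianization $(\pi_e)_\ast$, with Fact \ref{fact:NielsenKernel} ($\ker(\pi_e)_\ast=\Inn(F_2)$) absorbing the ambiguity by inner automorphisms and the signed permutations of columns accounting for the $\gen{S,I}$-action. The only minor differences are that you prove injectivity of $\Phi$ via an explicit stabilizer computation in $\GL_2(\Z)$ (the paper deduces it from the basis case) and you get surjectivity by lifting directly through $(\pi_e)_\ast$ rather than via positive words in $\gen{L,R}$ --- an alternative the paper itself mentions in the remark following its proof, the $L,R$ route being chosen there only to set up the Christoffel-word algorithm.
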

\begin{proof}
   Let $[u,v]$ be an extended conjugacy class of a basis of $F_2$. Since $\pi_e(u^{-1})=-\pi_e(u)$, $\psi(u)$ only depends on the extended conjugated class $[u]$. Hence, $\Psi$ and $\tilde{\Psi}$ are well-defined. By Equation \eqref{eq:OutF2Isom}, the matrix $(\pi_e(u),\pi_e(v))$ lies in $\GL_2(\Z)$, so it has determinant $\pm 1$, hence both $\tilde{\psi}$ and $\tilde{\Psi}$ take values in $E$. Note that $\tilde{\psi}(u,v)=\{\psi(u),\psi(v)\}$ determines the matrix $(\pi_e(u),\pi_e(v))$ up to permutation and change of sign of the columns, which amounts to right-multiplication by an element of $(\pi_e)_\ast\gen{S,I}$, so by Fact \ref{fact:NielsenKernel}, it uniquely determines $[u,v]$. Therefore, $\tilde{\Psi}$ (and thus $\Psi$ as well) is injective.

   For surjectivity, observe that given any pair of irreducible fractions $\{\sfrac p q,\sfrac{p'}{q'}\}$ such that $pq'-p'q=\pm1$ and $q,q'\geq 0$, we may suppose up to changing the roles of $(p,q)$ and $(p',q')$ that
   \begin{equation*}
      A=\begin{pmatrix}
         p & p' \\
         q & q'
      \end{pmatrix}
   \end{equation*}
   has determinant $1$. Since $\SL_2(\Z)$ is generated by the two matrices
   \begin{align*}
      (\pi_e)_\ast(L)&=\begin{pmatrix}
         1 & 1 \\
         0 & 1
      \end{pmatrix}&
      (\pi_e)_\ast(R) &= \begin{pmatrix}
         1 & 0 \\
         1 & 1
      \end{pmatrix}
   \end{align*}
   we see that there is a word $w\in\gen{L,R}<\Aut(F_2)$ such that $(\pi_e)_\ast(w) = A$. It follows that $\tilde{\Psi}([w(a,b)]) = \{\sfrac p q,\sfrac{p'}{q'}\}$, so $\tilde{\Psi}$ and hence $\Psi$ are surjective.
\end{proof}

\begin{remark}
   At the end of the proof, we could have directly used the surjectivity of $(\pi_e)_\ast$ instead, but this slight variation shows two facts. First, that an extended conjugacy class of bases of $F_2$ is determined by the choice of a unique element of $\gen{L,R}$, and second, it gives an algorithm in order to find such a word. However, we will modify this algorithm slightly so as to find words involving only positive powers of $L$ and $R$.
   
   There are other ways of computing such a word in $\gen{L,R}$ than the one we propose in the next paragraph, for instance using the continued fraction expansion of $\sfrac p q$ and $\sfrac{p'}{q'}$, as explained in \cite[\S A.3]{LeeXu2020}.
\end{remark}

\subsection{Christoffel words}

We now aim to define a family of (almost) unique representatives of extended conjugacy classes of primitive elements of $F_2$, given by \emph{Christoffel words}. We capitalize on the intuition given by the proof of Proposition \ref{prop:ECCofPrimBasisFareyEdgeCorrespondance} in order to define these.

Observe first that multiplying on the right by $(\pi_e)_\ast(L)$ (\resp $(\pi_e)_\ast(R)$) on the matrix $A$ in the proof amounts to computing the vector $(p+p',q+q')$, and replacing the right (\resp left) vector of $A$ with this new vector. Hence the \emph{Farey addition} $\sfrac p q\oplus\sfrac{p'}{q'}:=\sfrac{(p+p')}{(q+q')}$ is a reduced fraction whenever $\{\sfrac p q,\sfrac{p'}{q'}\}$ is a Farey edge. Observe moreover that it lies in the open interval with extremities $\sfrac p q$ and $\sfrac{p'}{q'}$. This allows us to make use of a dichotomy algorithm to find representatives of extended conjugacy classes of bases of $F_2$.

In order to make precise our dichotomy algorithm, we need to choose an orientation of the Farey edges. If $(r_1,r_2)$ is an oriented Farey edge, it cuts the hyperbolic space into two half-spaces. We denote by $V^+(r_1,r_2)$ (\resp $V^-(r_1,r_2)$) the subset of $\QP$ contained in the closed left (\resp right) half-space bounded by $(r_1,r_2)$. Moreover, observe that $r_1\oplus r_2$ always lies in $V^\epsilon(r_1,r_2)$ where $\epsilon\in\{\pm\}$ is the sign such that $r_1,r_2\in V^\epsilon(\infty,0)$. In the case of $(\infty,0)$
which has two representatives as pairs of reduced fractions with nonnegative denominator, we have $\pm 1 = \sfrac{\pm 1}0\oplus\sfrac 0 1\in V^\pm(\infty,0)$ depending on which representative of $\infty$ we choose.

Hence, our dichotomy algorithm for reaching $r\in\QP-\{0,\infty\}$ starts by setting $\epsilon$ to be the sign of $r$, so that $r\in V^\epsilon(\infty,0)$. Then, we define inductively a sequence $(r_1^n,r_2^n)$ of oriented Farey edges such that $r\in V^\epsilon( r_1^n,r_2^n)$ for all $n$. We initiate the sequence by setting $(r_1^0,r_2^0)=(\sfrac\epsilon 0,\sfrac01)$. Then, if $r = r_1^n\oplus r_2^n$, we stop there, and otherwise, $(r_1^{n+1},r_2^{n+1})$ is the unique oriented Farey edge in $\{(r_1^n,r_1^n\oplus r_2^n),(r_1^n\oplus r_2^n,r_2^n)\}$ such that $r\in V^\epsilon(r_1^{n+1},r_2^{n+1})$.

Since the Farey addition preserves the set of reduced fractions with nonnegative denominators, it is not difficult to prove that the number of steps in the algorithm needed to reach $r = \sfrac p q$ reduced with $q\geq 0$ is bounded above by $\max\{\abs{p},q\}$, so our algorithm terminates. Observe that if we follow along the algorithm, we can define a sequence $(x_n,y_n)$ of bases of $F_2$ such that $(\psi(x_n),\psi(y_n)) = (r_1^n,r_2^n)$ for all $n$. We start by setting $(x_0,y_0) = (a^\epsilon,b)$, and then we choose $(x_{n+1},y_{n+1})$ in $\{(x_n,x_ny_n),(x_ny_n,y_n)\} = \{L(x_n,y_n),R(x_n,y_n)\}$ mirroring the choice made for $(r_1^{n+1},r_2^{n+1})$. The representative of $\Psi^{-1}(r)$ is then the word $x_ny_n\in F_2$ where $(x_n,y_n)$ is the last basis defined in the algorithm.

This defines, for any $r\in\QP$, a unique representative $\Christoffel_e(\Psi^{-1}(r))$ in the extended conjugacy class $\Psi^{-1}(r)$, except for $r=\infty$, for which $\Christoffel_e([a])$ is non-determined, and could be either $a$ or $a^{-1}$. These words are the \emph{Christoffel words} in the basis $e$, they have the property that they are given by a unique word in the alphabet $\{L,R\}$ applied to either $(a,b)$ or $(a^{-1},b)$. Hence they always are positive words in either the basis $(a,b)$ or $(a^{-1},b)$, meaning that they only involves positive powers of $b$ and either $a$ or $a^{-1}$. In particular, Christoffel words are \emph{cyclically reduced}, meaning that they realize the minimum of the word length in their conjugacy class. Here is an example showing how the algorithm unfolds for $r=\sfrac34$.

\begin{example}
   For $r = \sfrac34$, the sequence of oriented Farey edges $(r_1^n,r_2^n)$ progressively take the values
   \begin{align*}
      &(\tfrac10,\tfrac01) &&(\tfrac11,\tfrac01) &&(\tfrac11,\tfrac12) &&(\tfrac11,\tfrac23)
   \end{align*}
   and stops, as $\sfrac11\oplus\sfrac23 = \sfrac34$. The corresponding bases $(x_n,y_n)$ are
   \begin{align*}
      &(a,b) &&(ab,b) &&(ab,ab^2) &&(ab,(ab)^2b)
   \end{align*}
   so we have $\Christoffel_e(\Psi^{-1}(\sfrac34)) = (ab)^3b$.
\end{example}

Running the same algorithm simultaneously on the two extremities of a Farey edge $\{r_1,r_2\}$ defines similarly a unique basis of $F_2$ in the extended conjugacy class of $\tilde{\Psi}^{-1}(\{r_1,r_2\})$, except for $\{\infty,0\}$ which has two representatives $(a,b)$ and $(a^{-1},b)$. Here is another example showing how the algorithm allows to find a representative of a Farey edge.

\begin{example}
   For the Farey edge $\{\sfrac{-3}4,\sfrac{-5}7\}$, the sequence of oriented Farey edges $(r_1^n,r_2^n)$ takes the values
   \begin{align*}
      &(\tfrac{-1}0,\tfrac01) &&(\tfrac{-1}1,\tfrac01)&&(\tfrac{-1}1,\tfrac{-1}2)&&(\tfrac{-1}1,\tfrac{-2}3)&&(\tfrac{-3}4,\tfrac{-2}3)
   \end{align*}
   and stops at $(\sfrac{-3}4,\sfrac{-5}7)$. The corresponding bases $(x_n,y_n)$ are
   \begin{align*}
      &(a^{-1},b)&&(a^{-1}b,b)&&(a^{-1}b,a^{-1}b^2)&&(a^{-1}b,(a^{-1}b)^2b)&&((a^{-1}b)^3b,(a^{-1}b)^2b)
   \end{align*}
   hence $((a^{-1}b)^3b,a^{-1}b((a^{-1}b)^2b)^2)$ is the basis corresponding to $\{\sfrac{-3}4,\sfrac{-5}7\}$.
\end{example}

For the rest of this paper, we will abuse the notation and identify $V$ with the set of Christoffel words in the basis $e$, and $E$ for the set of representatives of Farey edges as bases of $F_2$ given by our algorithm. As explained earlier, the correspondence only breaks for $\infty\in V$ and $\{0,\infty\}\in E$, but this should not cause any confusion later. Taking this identification further, we will also call Farey edges the bases of $F_2$ given by our algorithm. The context should be enough to distinguish between the two if needed.

We can sum up the properties of Christoffel words and Farey edges established above as follow.
\begin{enumerate}[(i)]
   \item Christoffel words and Farey edges are uniquely determined by the choice of a starting basis in $\{(a,b),(a^{-1},b)\}$ and a positive word $w\in\gen{L,R}$;
   \item Farey edges are bases of $F_2$ composed of Christoffel words;
   \item Christoffel words and Farey edges define a unique choice of cyclically reduced representatives in extended conjugacy classes of primitive elements and bases of $F_2$, except for $[a]$ and $[a,b]$.
\end{enumerate}

Lastly, we introduce a combinatorial language of which we will make abundant use throughout our proof.

\begin{definition}
   An \emph{$e$-gallery} is the choice of a starting basis among $\{(a,b),(a^{-1},b)\}$ and of a positive word in $\gen{L,R}$ (whose length will be called the length of the gallery). It can be seen on the Farey tessellation as a finite sequence of non-overlapping adjacent triangles one of which only containing the edge $\{\infty,0\}$.

   The \emph{$e$-level} $\Lv_e(u,v)$ of a basis $(u,v)$ of $F_2$ is the length of the smallest $e$-gallery ending on an element of $[u,v]$. The \emph{$e$-level} $\Lv_e(u)$ of a primitive element $u$ is the length of the shortest gallery ending at a basis whose member is an element of $[u]$.
\end{definition}

In the following, we will often omit to write the subscript $e$. It is understood that the basis is $e$ when no particular basis is mentioned.

\section{The half-length property}\label{sec:HLP}

Lee and Xu's proof relies heavily on the fact that they can associate to each basis of $F_2$ a right-angled hexagon whose side-lengths are linked to the translation lengths of the elements of the basis. We cannot make this hypothesis in general for representations $F_2\to\Isom(\H)$, so we need to assume the existence of such right-angled hexagons with loosely prescribed side-lengths. We gather these technical assumptions in what we call the half-length property.

Whenever $\rho:F_2\to\Isom(\H)$ is a representation, we will use small letters for elements of $F_2$ and the corresponding capital letter for its image by $\rho$. For instance, if $w\in F_2$, then $W = \rho(w)\in\Isom(\H)$.

\subsection{Reminders on hyperbolic geometry}
We will use several elementary results in hyperbolic geometry in the rest of the article, the interested reader may refer back to \cite{Ratcliffe2006,Martelli2022}. Throughout the article, we will assume that we are in the projective model of $\H$, seen as the projectivization of the cone of time-like vectors of the Minkowski space $\R^{d,1}$. Up to choosing an affine chart of $\RP^d$, which we endow with a fixed Euclidean metric, $\H$ can be identified with the open unit ball $B^d$ in this affine chart, and its closure $\overline\H$ with the closed unit ball $\overline{B^d}$. Among the useful features of the projective model of $\H$ is that a subspace of $\H$ is totally geodesic if and only if it is the intersection of $B^d$ with a projective subspace. Henceforth we will often often omit the adjective “totally geodesic”. The isometry group of $\H$ can be identified with the projectivization $\PO(d,1)$ of the group $\O(d,1)$ that preserves the Minkowski form of $\R^{d,1}$.

Note that up to conjugating everything by an isometry of $\H$, we can suppose without loss of generality that a given pair of orthogonal lines are diameters of $B^d$, or other similar conditions. We will frequently make such assumptions throughout the proof, in which case we will say that we are “in the \emph{configuration} where this pair of orthogonal lines are diameters of $B^d$”, for instance.

Recall also that in the projective model, one can associate to any hyperplane $H$ a point of $\RP^d-\overline{B^d}$, called its \emph{polar}. It is given by the projectivization of the orthogonal of $\Span H<\R^{d,1}$ for the Minkowski form. The following fact will be useful.

\begin{fact}[{see \cite[\Exe 6.1 (2) \& (3)]{Ratcliffe2006}}] \label{fact:polar}
   Let $H$ be a hyperplane of $\H\simeq B^d$, and let $p\in\RP^d-\overline{B^d}$ be its polar. 
   \begin{enumerate}[(i)]
      \item $p$ is the intersection of all the projective hyperplanes supporting $B^d$ at a point of $\partial B^d\cap\overline H$.
      \item A geodesic line $L\subset B^d$ is orthogonal to $H$ if and only if the projective line spanned by $L$ contains $p$.
   \end{enumerate}
\end{fact}

Another useful fact is that pairs of totally geodesic subspaces of $\H$ often admit a unique line that is perpendicular to both, called a \emph{common perpendicular}. If the two subspaces are hyperplanes that do not intersect, this can be seen as a corollary of Fact \ref{fact:polar}, as there is a unique projective line containing the polars of the two hyperplanes.

\begin{fact}[{see \cite[\Prop 2.2.3]{Martelli2022}}]\label{fact:existsUniqueCommonPerp}
   Let $E_1,E_2\subset\H$ be totally geodesic subspaces.
   \begin{enumerate}[(i)]
      \item If $\overline{E_1}\cap\overline{E_2}$ is a single point contained in $\partial\H$, then there is no line perpendicular to both $E_1$ and $E_2$.
      \item If $\overline{E_1}\cap\overline{E_2} = \varnothing$, then there is a unique line perpendicular to $E_1$ and $E_2$, and the distance $d(E_1,E_2)$ is realized exactly along this line.
   \end{enumerate}
\end{fact}

We will also need some notations for common perpendiculars and angles.

\begin{definition}\label{def:geomNotations}
   Given two geodesic subspaces $E_1$ and $E_2$ whose closures do not intersect, we denote by $\Perp(E_1,E_2)$ their unique common perpendicular. It is implicitly oriented “from $E_1$ to $E_2$”.

   Given two oriented lines $\ell_1$ and $\ell_2$, we denote by $\angle(\ell_1,\ell_2) = \angle(\ell_2,\ell_1)\in[0,\pi]$ the angle measured in the subspace spanned by $\ell_1$ and $\ell_2$, using parallel transport along $\Delta :=\Perp(\ell_1,\ell_2)$ if necessary. We will often use the notation $\angle_\Delta(\ell_1,\ell_2)$ for clarity, as a reminder that $\Delta$ is the common perpendicular of $\ell_1$ and $\ell_2$.

Lastly, we recall a few facts and definitions about isometries of $\H$. The \emph{translation length} of $\gamma\in\PO(d,1)$ is
\begin{equation*}
   \trans{\gamma}:=\inf_{o\in\H} d(o,\gamma o)
\end{equation*}
When $\trans{\gamma} = 0$, we say that $\gamma$ is \emph{elliptic} if it fixes a point in $\H$, or \emph{parabolic} otherwise. When $\trans{\gamma} >0$, we say that $\gamma$ is \emph{hyperbolic}.

A hyperbolic isometry $\gamma$ of $\H$ preserves a unique oriented geodesic line $\Ax\gamma$ in $\H$, called its \emph{axis}, on which it acts by translation by $\trans{\gamma}$ according to the orientation of $\Ax\gamma$. Hence, it fixes exactly two points in $\H$ which are the endpoints of $\Ax\gamma$. The \emph{attractive fixed point} $\gamma^+$ of $\gamma$ is the endpoint of $\Ax\gamma$ towards which it translates, and its \emph{repelling fixed point} $\gamma^-$ is the other endpoint of $\Ax\gamma$. See \cite[\S 2.2.6 \& \Cor 2.2.9]{Martelli2022} as a reference for these facts.
We will need the following normal form result for hyperbolic isometries.

\begin{fact}[{see \cite[\Thm 6.9]{Baker2002}}] \label{fact:hypIsomDec}
   Let $\gamma\in\PO(d,1)$ be a hyperbolic isometry. If we let $U<\R^{d,1}$ be the orthogonal of the subspace $\Span \gamma^- \oplus \Span\gamma^+$ for the Minkowski form, then we can write, in the decomposition $\R^{d,1} = \Span \gamma^+ \oplus U \oplus \Span \gamma^-$:
   \begin{equation*}
      \gamma = \begin{bmatrix}\mu & 0 & 0\\ 0 & O & 0\\ 0 & 0 & \mu^{-1}\end{bmatrix}
   \end{equation*}
   where $\mu > 1$, $O\in\O(q)\simeq \O(d-1)$, and $q$ is the restriction of the Minkowski form of $\R^{d,1}$ to $U$.
\end{fact}
\end{definition}

\subsection{Definition}\label{par:HLPdef}

We start by giving a framework for defining right-angled hexagons. For a pair of hyperbolic isometries $(U,V)$ such that $UV$ is hyperbolic as well, we respectively introduce $\side[\hex(U,V)]{U}$, $\side[\hex(U,V)]{V}$ and $\side[\hex(U,V)]{L}$ the common perpendiculars to the pairs of lines $(\Ax U,\Ax UV)$, $(\Ax V,\Ax UV)$ and $(\Ax U,\Ax V)$. When all three of these lines are defined, observe that they define, together with the oriented lines $\Ax U$, $\Ax V$ and $\Ax UV$, a right-angled hexagon which we denote by $\hex(U,V)$. Most of the time, we will omit the right-angled hexagon data and simply write $\side{U}$ for $\side[\hex(U,V)]{U}$, for instance. In the case of a sequence $(U_n,V_n)$, we will often write $\side{U_n}$, $\side{V_n}$ and $\side{L_n}$ (the last one stands for $\side[\hex(U_n,V_n)]{L}$).

We extend this definition to the case where two axes intersect in $\H$ ($d\geq 3$) by choosing a common perpendicular among the possible ones. This allows to define $\hex(U,V)$ as a degenerate right-angled hexagon whenever $\Ax U$, $\Ax V$ and $\Ax UV$ do not share any endpoint, and are not concurrent.
We also define an implicit ordering on $\{H_L,H_U,H_V\}$ where $\side{V} < \side{L} < \side{U}$.

Given isometries $U,V\in\Isom(\H)$ such that $\hex(U,V)$ is well-defined, we define the (signed) geometric translation length of $W\in\{U,V,UV\}$ relative to $\hex(U,V)$ as 
\begin{equation*}
\geom{\hex(U,V)}{W} = \epsilon_{XY}d(\side[\hex(U,V)]{X},\side[\hex(U,V)]{Y})
\end{equation*}
where $\side[\hex(U,V)]{X}$ and $\side[\hex(U,V)]{Y}$ are the two sides of $\hex(U,V)$ perpendicular to $\Ax W$, and $\epsilon_{XY}\in\{\pm 1\}$ is positive if the order induced by $W$ on $\{\side[\hex(U,V)]{X},\side[\hex(U,V)]{Y}\}$ coincides with the one inherited from $\hex(U,V)$, and negative otherwise.

We will also write
\begin{equation*}
   \transhex{\hex(U,V)} = \min\{\trans{U},\trans{V},\trans{UV}\}
\end{equation*}

\begin{definition}
   A representation $\rho: F_2\to\Isom(\H)$ is \emph{irreducible} if $\rho(F_2)$ has no global fixed point in $\partial\H$.
\end{definition}

\begin{definition}\label{def:HLP}
   A representation $\rho:F_2\to\Isom(\H)$ has the \emph{half-length property} if
   \begin{enumerate}
      \item $\rho$ is irreducible;
      \item the images of primitive elements are hyperbolic;
      \item there exists $\epsilon > 0$ such that for every Farey edge $(U,V)$:
         \begin{equation}\label{eq:HLP}
            \abs*{\frac12\trans{W} - \geom{\hex(U,V)}{W}}\leq \max\left\{\frac16\transhex{\hex(U,V)} - \epsilon,0\right\}
      \end{equation}
for all $W\in\{U,V,UV\}$.
   \end{enumerate}
\end{definition}
\begin{remark}
   In this definition, $\hex(U,V)$ is well-defined (it is possibly degenerate) for every Farey edge $(U,V)$ because of the irreducibility of $\rho$. Indeed, $\Ax U$ and $\Ax V$ cannot share an endpoint, otherwise both $U$ and $V$ would fix this point, so $\rho(F_2)$ would have a global fixed point in $\partial\H$.
\end{remark}
\begin{remark}
   If $\rho:F_2\to\Isom(\H)$ satisfies the half-length property, then for any Farey edge $(U,V)$ and $W\in\{U,V,UV\}$, $\geom{\hex(U,V)}{W}\geq\frac13\trans{W} + \epsilon > 0$. Hence it won't be necessary to discuss the sign of the geometric translation lengths later on.
\end{remark}
We denote by $\HLP\subset\chi(F_2,\Isom(\H))$ the subset of characters satisfying the half-length property. For $[\rho]\in\HLP$, we also denote by $E([\rho])$ the maximal value of $\epsilon$ such that $\rho$ satisfies \eqref{eq:HLP}. Note that both $\HLP$ and $E$ are $\Out(F_2)$-invariant.

It is not clear at first sight whether or not such representations are easy to find. However, it is not difficult to check that Coxeter extensible representations sending primitive elements to hyperbolic isometries always satisfy the half-length property.

\subsection{Coxeter extensible representations} \label{par:CoxExtReps}

A priori, Definition \ref{def:CoxExtRep} seems to depend on the choice of a basis of $F_2$. However, Proposition \ref{prop:AutF2W3Same} implies that $\rho : F_2\to\Isom(\H)$ is Coxeter extensible in the basis $(a,b)$ if and only if it is Coxeter extensible in any other basis.

Given a Coxeter extensible representation $\rho:F_2\to\Isom(\H)$, we can find three isometric involutions $I_A$, $I_L$ and $I_B$ of $\H$ satisfying $A = I_AI_L$ and $B=I_LI_B$. Since an involution in $\Isom(\H)$ is an elliptic isometry, the sets of points fixed by each of these involutions are nontrivial totally geodesic subspaces of $\H$, which we denote by $F_A$, $F_L$ and $F_B$ respectively (see \cite[\Thm 4.7.1]{Ratcliffe2006} and the discussion that follows).

 Notice that since $A = I_AI_L$, if it is hyperbolic, then $I_A\Ax A = \Ax I_LI_A = \Ax A^{-1} = \Ax A$, and similarly, $I_L\Ax A = \Ax A$, so $\Ax A$ is fixed by both $I_A$ and $I_L$. Since $I_A$ and $I_L$ are orthogonal involutions, this implies that $\Ax A$ is perpendicular to both $F_A$ and $F_L$, so we either have $F_A\cap F_L\neq\varnothing$, or $\overline{F_A}\cap\overline{F_L}=\varnothing$ by Fact \ref{fact:existsUniqueCommonPerp}. In the first case, $A$ is clearly elliptic, so $A$ hyperbolic implies $\overline{F_A}\cap\overline{F_L} = \varnothing$. If this is the case, then clearly the unique common perpendicular to $F_A$ and $F_L$ is $A$-invariant, and $A$ is hyperbolic of translation length $2d(F_A,F_L)$.

Hence $A$ is hyperbolic if and only if $\overline{F_A}\cap\overline{F_L}=\varnothing$, in which case $\Ax A$ is the unique common perpendicular to $F_A$ and $F_L$, and $d(F_A,F_L) = \frac12\trans{A}$. Similar results hold for $B = I_LI_B$ and $AB=I_AI_B$.

These properties lead to the following proposition, which exhibits a family of representations of $F_2$ satisfying the half-length property.

\begin{proposition}\label{prop:CoxExtHasHLP}
   If $\rho:F_2\to \Isom(\H)$ is Coxeter extensible and the image of every primitive element of $F_2$ is hyperbolic, then $\rho$ satisfies the half-length property.

   Moreover, $E([\rho]) = \frac16\PrimSys([\rho])$.
\end{proposition}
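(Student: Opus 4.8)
The plan is to use the three fixed subspaces $F_A$, $F_L$, $F_B$ associated to the involutions $I_A$, $I_L$, $I_B$ to build, for each basis $(u,v)$ of $F_2$, a configuration of three totally geodesic subspaces whose pairwise distances are \emph{exactly} half the translation lengths of $U$, $V$, $UV$, and then to show that the right-angled hexagon $\hex(U,V)$ coming from the axes has geometric translation lengths that differ from these pairwise distances by at most a controlled error. Conditions (i) and (ii) of Definition \ref{def:HLP} are immediate: irreducibility holds because a global fixed point at infinity for $\rho(F_2)$ would force $A$ and $B$ to share a fixed endpoint, contradicting that $A$, $B$, $AB$ are all loxodromic (these are primitive); and (ii) is the hypothesis. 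So the work is entirely in establishing the inequality in (iii).

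The key observation, following the discussion before the statement, is that since $\rho$ is Coxeter extensible in \emph{every} basis (Proposition \ref{prop:AutF2W3Same}), for any basis $(u,v)$ we obtain involutions $I_U$, $I_V$, $I_L'$ — say with fixed subspaces $F_U$, $F_V$, $F_L'$ — such that $U = I_U I_L'$, $V = I_L' I_V$, $UV = I_U I_V$. Because each of $U$, $V$, $UV$ is loxodromic, each relevant pair of fixed subspaces is disjoint in $\overline{\H}$, the common perpendicular to the pair is the axis of the corresponding isometry, and the segment it cuts out has length exactly $\tfrac12\trans{U}$ (resp. $\tfrac12\trans{V}$, $\tfrac12\trans{UV}$). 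The point is that $F_U$, $F_V$, $F_L'$ play the role of an ``ideal'' version of the sides $\side{U}$, $\side{V}$, $\side{L}$: the axes $\Ax U$, $\Ax V$, $\Ax UV$ are common perpendiculars to pairs among them. I would then compare the hexagon with vertices/sides coming from $(F_U, F_V, F_L')$ and the honest right-angled hexagon $\hex(U,V)$ built from the three axes. The side $\side{U}$ of $\hex(U,V)$ is the common perpendicular of $\Ax U$ and $\Ax UV$; but $F_U$ is \emph{also} perpendicular to both $\Ax U$ and $\Ax UV$ (it meets $\Ax U$ and $\Ax UV$ orthogonally since $I_U$ fixes both axes). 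In $\H[2]$ and $\H[3]$ the common perpendicular of two disjoint lines is unique, so $\side{U}$ would literally be $F_U$ (intersected with the relevant plane) and $\geom{\hex(U,V)}{A} = \tfrac12\trans{A}$ exactly; in higher dimensions there is a whole family of common perpendiculars, and the chosen $\side{U}$ need not be $F_U$. The error term $\tfrac16 \transhex{\hex(U,V)} - \epsilon$ is exactly the slack allowing for this non-uniqueness.

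The main obstacle, therefore, is to bound $\bigl|\tfrac12\trans{A} - \geom{\hex(U,V)}{A}\bigr|$ in the higher-dimensional case where common perpendiculars are not unique. The strategy is: given two disjoint geodesics $\gamma_1 = \Ax U$ and $\gamma_2 = \Ax UV$, any two common perpendiculars $\delta, \delta'$ have feet on $\gamma_1$ (resp. $\gamma_2$) that are close together — in fact at distance controlled by a decreasing function of $d(\gamma_1,\gamma_2) = $ (the ``width'' of the hexagon along $\side U$), which in turn is bounded below in terms of $\transhex{\hex(U,V)}$ because $\gamma_1$ and $\gamma_2$ are perpendicular to $F_U$ while their other ends stay far apart. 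Quantitatively, one uses the standard fact that in $\H$ two geodesics admitting a common perpendicular of length $w$ diverge at a definite rate, so the feet of any second common perpendicular differ by $O(e^{-w})$-type bounds; consequently the signed distance between the two feet-segments measured along the axes, which is what enters $\geom{\hex(U,V)}{A}$ versus the ``ideal'' value $\tfrac12\trans A$, is bounded by an explicit function of $w$ that can be made $\leq \tfrac16\transhex{\hex(U,V)} - \epsilon$ once one notes that $w$ itself grows with $\transhex{\hex(U,V)}$. I would isolate this as a geometric lemma about pairs of disjoint geodesics in $\H$ (it is purely a statement in a copy of $\H[3]$ or $\H[4]$ spanned by the two geodesics and a common perpendicular), prove the required comparison estimate there, and then apply it three times — once for each of $A \in \{U, V, UV\}$ — to conclude. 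The uniformity of $\epsilon$ over all bases is automatic since the estimate depends only on $d$ and on lower bounds for translation lengths, and the bad region (where the error term is $0$) is precisely where $\transhex{\hex(U,V)}$ is too small for the quantitative bound to help, handled by the $\max\{\cdot,0\}$.
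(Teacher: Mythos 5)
Your set-up is the same as the paper's: use Proposition \ref{prop:AutF2W3Same} to get Coxeter extensibility in every basis $(u,v)$, write $U=I_UI_L$, $V=I_LI_V$, $UV=I_UI_V$, and observe that each axis is the common perpendicular of the two relevant fixed subspaces, which it meets at distance exactly $\tfrac12\trans{U}$ (resp.\ $\tfrac12\trans{V}$, $\tfrac12\trans{UV}$) apart. Where you diverge is the claim that in dimension $d\geq 4$ there is ``a whole family of common perpendiculars'' to $\Ax U$ and $\Ax UV$, so that $\side{U}$ need not lie in $F_U$, forcing you into an approximate comparison lemma. That premise is false: for two geodesics of $\H$ whose closures in $\overline{\H}$ are disjoint, the distance function on their product is strictly convex and proper, so the common perpendicular is unique in \emph{every} dimension (this is exactly what Definition \ref{def:geomNotations} uses). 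Since $F_U$ meets both $\Ax U$ and $\Ax UV$ orthogonally, the geodesic in $F_U$ joining the two feet is a common perpendicular of these axes, hence equals $\side{U}$; similarly the side $\side{L}$ of $\hex(U,V)$ meets $\Ax U$ at $\Ax U\cap F_L$. Therefore the two sides of $\hex(U,V)$ perpendicular to $\Ax U$ have feet exactly at $\Ax U\cap F_L$ and $\Ax U\cap F_U$, so $\geom{\hex(U,V)}{U}=\tfrac12\trans{U}$ \emph{exactly} (the case of intersecting axes is covered by the paper's convention, with the same feet), and condition (iii) holds with zero error for any $\epsilon$. Your approximation scheme is not only unnecessary, it could not close the argument: when $\transhex{\hex(U,V)}$ is small the right-hand side of (iii) is $\max\{\tfrac16\transhex{\hex(U,V)}-\epsilon,0\}=0$, so the condition demands exact equality there --- the $\max\{\cdot,0\}$ works against an error estimate, not for it --- and your uniform $\epsilon$ has no source, since the proposition provides no positive lower bound on the translation lengths of primitive elements.

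Your justification of irreducibility is also incorrect: a common fixed point of $A$ and $B$ in $\partial\H$ does not contradict the loxodromy of $A$, $B$ and $AB$ (two loxodromics sharing an endpoint and having distinct translation lengths have loxodromic product; this is precisely why Proposition \ref{prop:BQisIrr} needs the full $Q_\lambda$-conditions rather than loxodromy of a few primitives). The paper instead uses the Coxeter structure: $\Ax A$ and $\Ax B$ are both perpendicular to $F_L$, so if they shared an endpoint at infinity one would get a generalized triangle with two right angles and one zero angle, hence angle sum $\pi$, which is impossible in $\H$.
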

\begin{proof}
   First, we see that $\rho$ is irreducible. Indeed, if it wasn't, then in particular, $A$ and $B$ would preserve the same point in $\partial \H$, so $\Ax A$ and $\Ax B$ would share an endpoint. But they are both perpendicular to $F_L$, so we find a triangle in $\overline{\H}$ with angle sum $\pi$, contradicting Corollary 1 of \cite[\Thm 3.5.5]{Ratcliffe2006}.

   The discussion above shows that the lengths of the sides of $\mathcal{H}(A,B)$ along $\Ax A$, $\Ax B$ and $\Ax AB$ are as required (with $\epsilon = \frac16\PrimSys([\rho])$ clearly maximal). We conclude with Proposition \ref{prop:AutF2W3Same}.
\end{proof}

Recall that any irreducible representations of $F_2$ into $\PSL_2(\C)$ is Coxeter extensible. It turns out this is not true when the dimension is larger than $3$.

\begin{remark}\label{rmk:CoxeterExtensibleRepsDimEstimate}
   We can do a dimension estimate of the set of Coxeter extensible representations in $\chi(F_2,\Isom(\H))$, but the results depend on the dimensions of the subspaces $F_A$, $F_B$ or $F_L$ we allow ourselves to consider. The maximal dimension happens for these three subspaces having the same dimension $\lceil\frac{d+1} 2\rceil$, giving $3\lceil \frac{d+1}{2}\rceil\lfloor\frac{d+1}{2}\rfloor - \frac{d(d+1)}{2}$. A similar estimate shows that the character variety has dimension $\frac{d(d+1)}{2}$. Therefore, the Coxeter extensible locus does not define an open subset of the character variety for $d\geq 4$.
\end{remark}

\section{Primitive stability implies the \texorpdfstring{$Q$}{Q}-conditions} \label{sec:PSinBQ}

We now recall the definitions of primitive stability and the $Q$-conditions, and give the proof of classical implications between them.

\subsection{The isometric action of \texorpdfstring{$F_2$}{F\_2} onto itself}
\label{par:Axes}

We denote by $|u|_e$ the length of a word $u\in F_2$ and by $\|u\|_e$ its cyclically reduced length.

It is a classical fact that the function $d_e(u,v) = |u^{-1}v|_e$ defines a left-invariant metric on $F_2$. Hence, the left action of $F_2$ on itself is isometric, and the translation length of a word $u\in F_2$ is $\trans[e]{u} = \|u\|_e$, which is positive whenever $u\neq 1$. We can also see that a nontrivial element $w$ of $F_2$ acts \emph{axially} in the sense of the following proposition.

\begin{proposition}\label{prop:AxisInF2}
   Choose $w\in F_2 - \{1\}$. There is a unique geodesic line $\Ax w$ on which $w$ acts as a translation by $\trans[e]{w}$. Moreover, $\Ax uwu^{-1} = u\Ax w$ for all $u\in F_2$, and when $w$ is cyclically reduced, $\{w^k,k\in\mathbb{Z}\}\subset\Ax w$.
\end{proposition}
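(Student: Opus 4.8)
The plan is to exhibit the axis explicitly using the structure of cyclically reduced words, then transport the construction along conjugacy. First I would reduce to the case where $w$ is cyclically reduced: if $w = u w_0 u^{-1}$ with $w_0$ cyclically reduced and $|u|_e$ maximal among such expressions, then left-translation by $w$ is conjugate (via left-translation by $u$, which is an isometry of $(F_2,d_e)$) to left-translation by $w_0$, so it suffices to treat $w_0$ and then set $\Ax w := u\,\Ax w_0$; this simultaneously gives the relation $\Ax(uwu^{-1}) = u\,\Ax w$ once uniqueness is known. So assume $w$ is cyclically reduced and nontrivial, and consider the bi-infinite sequence of vertices $(w^k)_{k\in\Z}$ in the Cayley graph of $F_2$. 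Because $w$ is cyclically reduced, the concatenation $\cdots w\,w\,w\cdots$ has no cancellation, so any finite subword $w^j w^{j+1}\cdots w^{k}$ is already reduced; hence $d_e(w^j, w^k) = |{(w^j)}^{-1}w^k|_e = |w^{k-j}|_e = |k-j|\cdot|w|_e = |k-j|\cdot\|w\|_e$. This shows the map $k\mapsto w^k$ is an isometric embedding of $\Z$ (scaled by $\|w\|_e$) into $F_2$, i.e. these vertices lie on a geodesic line $\Ax w$ (fill in the edges between consecutive $w^k$ along the obvious geodesic segments), and left-translation by $w$ shifts this line by $\|w\|_e = \trans[e]{w}$, which also recovers the formula $\trans[e]{w} = \|w\|_e$ claimed just before the proposition. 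This establishes existence and the inclusion $\{w^k : k\in\Z\}\subset\Ax w$ in the cyclically reduced case.

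For uniqueness, suppose $\gamma$ is any geodesic line on which $w$ acts as a translation by $\ell := \trans[e]{w}$. I would argue that $\gamma$ must pass through (or stay uniformly close to) the vertices $w^k$, and then that in a tree-like graph a geodesic line is determined by any two of its points together with the requirement of being a geodesic. Concretely: pick $v_0\in\gamma$; then $w^k v_0\in\gamma$ for all $k$, and since $\gamma$ is a geodesic, $d_e(v_0, w^k v_0) = |k|\ell$, so $v_0$ and $w v_0$ realize the minimal displacement and the concatenated path $\cdots \to v_0 \to wv_0 \to w^2 v_0 \to \cdots$ is a geodesic line. In the Cayley graph of a free group (a tree, after subdividing edges into unit length), the convex hull of $\{w^k v_0\}$ is a line, and comparing it with the line through the $w^k$: both are $w$-invariant geodesic lines, and in a tree two $w$-invariant lines with the same translation length must coincide (the set of points moved minimally by a hyperbolic isometry of a tree is exactly its axis, a single line). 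This pins down $\Ax w$ uniquely, and uniqueness in turn legitimizes the definition $\Ax(uwu^{-1}) := u\,\Ax w$ made above and shows it is independent of the chosen conjugating element.

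I expect the main obstacle to be stating the uniqueness cleanly without invoking more tree-geometry machinery than the paper wants to assume: one must be slightly careful that ``geodesic line on which $w$ acts as a translation by $\trans[e]{w}$'' genuinely forces the line to be the combinatorial axis, rather than some parallel line (which cannot happen in a tree but would be a real subtlety in a general $\mathrm{CAT}(0)$ setting). The cleanest route is to note that $F_2$ with $d_e$ is (bi-Lipschitz to) a simplicial tree, that a tree has no nondegenerate parallel pair of geodesic lines, and that the $\mathrm{Min}$-set of a hyperbolic tree isometry is its axis; all of this is standard and can be cited or dispatched in a sentence. Everything else — the no-cancellation computation of distances, the conjugation equivariance, and the identification $\trans[e]{w}=\|w\|_e$ — is routine and follows directly from the definition of the word metric.
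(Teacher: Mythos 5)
Your argument is correct, but it is organized differently from the paper's. The paper does not construct the line through the powers of $w$ at all: it looks at the displacement function on vertices, notes that the displacement of $u$ under left-translation by $w$ is $d_e(u,wu)=\abs{u^{-1}wu}_e$, which is minimal (equal to $\trans[e]{w}=\norm{w}_e$) exactly when $u^{-1}wu$ is cyclically reduced, and then shows directly that this set of minimizers is a geodesic line (if $u$ and $uh$ are both minimizers, then $h$ lies on the geodesic from $1$ to $u^{-1}wu$); existence, uniqueness, equivariance and the inclusion of the powers $w^k$ all drop out of this single characterization. You instead build the axis explicitly as the bi-infinite geodesic through $\{w^k\}$ via the no-cancellation computation $d_e(w^j,w^k)=\abs{k-j}\,\norm{w}_e$, and obtain uniqueness from the tree facts that the min-set of a hyperbolic isometry is its axis and that a tree contains no parallel pair of lines. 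Your route makes the inclusion $\{w^k\}\subset\Ax w$ and the value $\trans[e]{w}=\norm{w}_e$ immediate and very concrete, at the cost of importing (or re-proving) the standard tree-isometry facts for the uniqueness step; the paper's route gets uniqueness and the full vertex set of the axis in one stroke without naming any tree machinery, but leaves the last claims as ``easily follow''. One small slip in your reduction: in the decomposition $w=uw_0u^{-1}$ with $w_0$ cyclically reduced there is no $u$ of \emph{maximal} length (e.g.\ $aba^{-1}=(ab^n)\,b\,(ab^n)^{-1}$ for all $n$); you want the reduced-as-written decomposition (equivalently, $u$ of minimal length). This does not affect your argument, since any such decomposition transports the axis of $w_0$ to one for $w$, and uniqueness makes the result independent of the choice.
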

\begin{proof}
   Remark that if $u\in F_2$, then its orbit $O_w(u)$ under the action of $w$ is $\{w^ku,k\in\mathbb{Z}\}$. Thus, $w$ acts on $O_w(u)$ with a translation length of $d_e(u,wu) = |u^{-1}wu|_e$. This translation length is minimal and equal to $\trans[e]{w}$ if and only if $u^{-1}wu$ is cyclically reduced. Now, if $u$ and $h$ are such that both $u^{-1}wu$ and $(uh)^{-1}wuh$ are cyclically reduced, then $h$ is on the unique geodesic passing through $1$ and $u^{-1}wu$.

The other two claims easily follow.
\end{proof}

\subsection{Generalizations of the \texorpdfstring{$Q$}{Q}-conditions}\label{par:generalizingBQdiscussions}

Observe that, in Definition \ref{def:BQcond}, there seems to be no canonical choice for $\lambda$ \emph{a priori}. We adopt two different ways of addressing this issue here.

\begin{definition}\label{def:strongQCond}
   A representation $\rho:F_2\to\Isom(\H)$ satisfies the \emph{strong $Q$-conditions} if it satisfies the $Q_\lambda$-conditions for any $\lambda>0$.
\end{definition}

The most general way one may want to define a $\Out(F_2)$-invariant subset of $\chi(F_2,\allowbreak\Isom(\H))$ from the $Q_\lambda$-conditions is to allow $\lambda$ to depend on $\rho$.

\begin{definition}\label{def:Qfcond}
   Let $\W\subset\chi(F_2,\Isom(\H))$ be an $\Out(F_2)$-invariant subset, and $f:\W\to\R_{>0}$ be an $\Out(F_2)$-equivariant function. A representation $\rho:F_2\to\Isom(\H)$ such that $[\rho]\in\W$ satisfies the \emph{$Q_f$-conditions} if it satisfies the $Q_{f([\rho])}$-conditions.
\end{definition}

For our purposes, we can choose $f$ such that it mostly depends on the quantity
\begin{equation*}
   \PrimSys([\rho]):=\inf\{\trans[\H]{\rho(\gamma)},\text{$\gamma\in F_2$ primitive}\}
\end{equation*}
Observe that $\PrimSys([\rho])>0$ whenever we assume that $\rho$ satisfies the $Q_\lambda$-conditions for some $\lambda>0$.

\subsection{Main theorem and known implications}

We have now introduced all the notions needed to phrase our main theorem. Note that contrary to \cite{LeeXu2020,Series2019}, our method does not allow to prove that the $Q_\lambda$-conditions are equivalent to primitive stability for some fixed $\lambda>0$ independent of the representation. Instead, as $\Lc_d\to +\infty$ as $\PrimSys([\rho])\to 0$ in the theorem below, our proof requires $\lambda$ to be large when the primitive systole is small.

\begin{theorem}\label{thm:PSisPDisBQ}
   There exists a function $\Lc_d:\HLP\to\R_{>0}$ depending only on $\PrimSys([\rho])$ and $E([\rho])$ that goes to $+\infty$ when $\PrimSys([\rho])\to 0$ such that for any representation $\rho:F_2\to\Isom(\H)$ satisfying the half-length property, the following properties are equivalent
\begin{enumerate}[(i), ref = \roman*]
\item $\rho$ is primitive stable
\item $\rho$ is primitive displacing,
\item $\rho$ satisfies the strong $Q$-conditions,
\item $\rho$ satisfies the $Q_{\Lc_d}$-conditions. 
\end{enumerate}
\end{theorem}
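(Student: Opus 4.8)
The plan is to prove the chain of implications $(i)\Rightarrow(ii)\Rightarrow(iii)\Rightarrow(iv)\Rightarrow(i)$, where the first three arrows are the "easy" direction and the last one is the substantive content of the paper.

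\textbf{The easy implications.} First I would establish $(i)\Rightarrow(ii)\Rightarrow(iii)$, which hold for any representation into any proper geodesic Gromov hyperbolic space and should be recorded as Proposition \ref{prop:PSinBQ} (referred to in the introduction as Proposition \ref{prop:PSinPDinBQ}). For $(i)\Rightarrow(ii)$: if $\rho$ is primitive stable with basepoint $o$ and constants $(K,c)$, then for a primitive $x$ the orbit map sends $\Ax x$ to a $(K,c)$-quasigeodesic. Since $\trans[e]{x}=\|x\|_e$ and the cyclically reduced powers $x^k$ lie on $\Ax x$ (Proposition \ref{prop:AxisInF2}), the points $o, \rho(x)o, \dots, \rho(x^n)o$ are within bounded Hausdorff distance of a genuine geodesic of length $\asymp n\|x\|_e$; a standard stability/Morse-lemma argument then yields $d(o,\rho(x^n)o)\geq K'n\|x\|_e - c'$, and dividing by $n$ and letting $n\to\infty$ gives $\trans{\rho(x)}\geq K'\|x\|_e-c'$, with the $\max\{0,\cdot\}$ absorbing small cases; hence primitive displacement. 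For $(ii)\Rightarrow(iii)$: if $\rho$ is primitive displacing with constants $(K,c)$ then for any $\lambda>0$, any primitive $x$ with $\trans{\rho(x)}\leq\lambda$ satisfies $\|x\|_e< (\lambda+c)/K$; there are only finitely many conjugacy classes of primitive elements of bounded reduced length (indeed finitely many words of bounded length up to cyclic permutation), so the strong $Q$-conditions hold. The implication $(iii)\Rightarrow(iv)$ is immediate once $\lambda_d$ is chosen to take positive values, since satisfying the $Q_\lambda$-conditions for \emph{all} $\lambda$ certainly implies satisfying them for $\lambda=\lambda_d(\PrimSys(\rho))$ (note $\PrimSys(\rho)>0$ is needed and follows from (iii)).

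\textbf{The hard implication $(iv)\Rightarrow(i)$.} This is where the half-length property and the geometry of right-angled hexagons enter, and it is the content of Section \ref{sec:proofOfConverse}. The strategy, adapting Lee--Xu, is: assume $\rho$ satisfies the $Q_{\lambda_d\circ\PrimSys}$-conditions, so in particular $\PrimSys(\rho)=:s>0$ and the set of primitive conjugacy classes with translation length $\leq\lambda_d(s)$ is finite. One wants to show the orbit map sends every primitive axis to a uniform quasigeodesic. Using the Christoffel-word/$e$-gallery combinatorics of Section \ref{sec:primElem}, each primitive element is represented by a word obtained by a positive word in $\{L,R\}$ applied to a starting basis, and each basis $(u,v)$ carries a right-angled hexagon $\hex(U,V)$ whose geometric side-lengths are comparable (via the half-length property, with its factor $\frac16$ and constant $\epsilon$) to $\frac12\trans{\cdot}$ of the three elements $U,V,UV$. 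The key is an inductive/large-scale geometry estimate along the gallery: one controls how the hexagons $\hex(U_n,V_n)$ for a nested sequence of bases fit together in $\H$, showing the configuration of axes stays "fellow-traveling" a coarse geodesic. Where Lee--Xu use exact trigonometric identities for right-angled hexagons in $\H[3]$ (and Tan--Wong--Zhang in $\H[4]$), I would replace them by coarse geometric inequalities: angles/distances in a right-angled hexagon in $\H$ with long enough sides force the opposite side to be long and the hexagon to be thin, purely by $\mathrm{CAT}(-1)$ comparison, so the non-adjacent sides $\side{U}, \side{V}, \side{L}$ behave like a thin tripod. The function $\lambda_d$ must be chosen (decreasing in the systole) large enough that: (a) all but finitely many primitive axes have translation length in the "long" regime where the coarse hexagon estimates apply uniformly, and (b) the finitely many exceptional short primitive elements can be handled separately by a compactness/finiteness argument (they contribute only bounded error to the quasigeodesic constants). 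One then assembles, along an arbitrary primitive axis, the bi-infinite concatenation of hexagon sides into a path that is a uniform quasigeodesic, using a local-to-global principle for quasigeodesics in hyperbolic spaces (a "local quasigeodesics with long enough legs are global quasigeodesics" lemma).

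\textbf{Main obstacle.} The genuine difficulty is the large-scale hexagon estimate replacing the trigonometry: one must show that a right-angled hexagon in $\H$ (any $d$) with three alternating sides of length $\geq\ell$ has its other three sides long and the whole hexagon uniformly thin, with constants \emph{independent of $d$}, and — more delicately — track how these hexagons glue along a gallery so that the cumulative path does not drift. This is exactly the point where the half-length property's specific numerology ($\frac16$ and $\epsilon$) is used: it must be strong enough to guarantee the alternating sides grow at a definite rate along the gallery relative to the error terms. I expect the bulk of Section \ref{sec:proofOfConverse} to be devoted to making this quantitative, and to pinning down the explicit dependence of $\lambda_d$ on $\PrimSys(\rho)$; the finitely-many-short-primitives bookkeeping is routine by comparison. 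Once $(iv)\Rightarrow(i)$ is in hand, the cycle closes and all four conditions are equivalent.
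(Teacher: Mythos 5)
Your chain of easy implications $(i)\Rightarrow(ii)\Rightarrow(iii)\Rightarrow(iv)$ is correct and essentially the argument of Proposition \ref{prop:PSinPDinBQ} (you do not even need the Morse lemma: for $x$ cyclically reduced the powers $x^n$ lie on $\Ax x$, so the quasi-geodesic lower bound applies to them directly, and the limit gives $\trans{\rho(x)}\geq K\norm{x}_e$). The problem is the hard implication $(iv)\Rightarrow(i)$. There you describe the correct scaffolding (Christoffel words, galleries, the hexagons $\hex(U,V)$ controlled by the half-length property), but the mechanism you propose --- single hexagons with long alternating sides are thin tripods, hence the concatenated sides along a gallery form a path to which a local-to-global quasigeodesic principle applies --- hides the actual difficulty inside the phrase ``one controls how the hexagons fit together \ldots showing the configuration of axes stays fellow-traveling a coarse geodesic.'' Thinness of an individual hexagon costs nothing; what must be excluded is that successive hexagons \emph{fold back} on each other, i.e.\ that $\Ax X$ and $\Ax Y$ leave $\side{L}$ on the ``wrong'' sides so that the legs of consecutive hexagons backtrack and the local quasigeodesic hypothesis (a definite lower bound on the progress, or Gromov product, at each junction) simply fails. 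Neither the half-length property nor the $Q$-conditions rules this out hexagon-by-hexagon, so the local-to-global lemma cannot be invoked as stated; this is precisely the point where Lee--Xu needed trigonometric identities and where your sketch has no substitute.

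The paper resolves exactly this point, and by a different mechanism than the one you outline. It introduces the dichotomy of \emph{narrow} Farey edges ($X^+\in\ohem{L}{Y}$, equivalently $\angle_{\side{L}}(\Ax X,\Ax Y)<\pi/2$) and proves: narrowness propagates to all successors once the level is large (Corollary \ref{cor:heredityNarrow}); pure $L^n$- or $R^n$-iterates eventually become narrow, via the asymptotic $\trans{g^nh}=n\trans{g}+t+o(1)$ of Lemma \ref{lm:transLength} (Proposition \ref{prop:LnRn}); and, by a limiting argument along a putative infinite branch of non-narrow edges, the near-additivity $\geom{\hex_n}{Y_n}\approx\geom{\hex_n}{X_n}+\geom{\hex_n}{X_nY_n}$ combined with the half-length property forces translation lengths to decrease by a definite amount $2\epsilon$ at each step, contradicting the $Q_\lambda$-conditions --- whence all but finitely many Farey edges are narrow (Proposition \ref{prop:finitenessNonAcute}); this is also where $\lambda_d(\PrimSys)$ is pinned down. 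Narrow high-level edges then yield property $\mathcal{O}$: a hyperplane whose translates under positive words in the basis are pairwise disjoint and linearly ordered (via the quantitative non-intersection Lemma \ref{lm:hypNotInt}), and the quasigeodesic lower bound comes from counting the ordered disjoint hyperplanes crossed, not from a local-to-global concatenation. Finitely many low-level words are handled trivially because their images are loxodromic. So your proposal is incomplete where it matters: you would need to supply an argument playing the role of the narrowness/property-$\mathcal{O}$ analysis (or some other device preventing backtracking along the gallery) before any assembly of hexagon sides into a uniform quasigeodesic can go through.
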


 Note that if we restrict to the $\Out(F_2)$-invariant subset of Coxeter extensible characters sending primitive elements to hyperbolic isometries, then $\Lc_d$ only depends on $\PrimSys([\rho])$ as said in Remark \ref{rmk:precisionMainThmCox} since $E([\rho]) = \frac16\PrimSys([\rho])$ by Proposition \ref{prop:CoxExtHasHLP}.

As noted by Minsky in \cite{Minsky2013}, it is not hard to check that primitive stability implies the primitive displacing property. This gives a proof of all the forward implications of Theorem \ref{thm:PSisPDisBQ}, the proof of the remaining implication is covered in the next section.

\begin{proposition}\label{prop:PSinPDinBQ}
   Let $X$ be a proper geodesic Gromov hyperbolic space. For representations $F_n\to\Isom(X)$, primitive stability implies primitive displacement, which implies the strong $Q$-conditions, which implies the $Q_\lambda$-conditions for any $\lambda >0$.
\end{proposition}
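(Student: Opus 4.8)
The statement is a chain of three implications, and only the first carries real content; the other two are short or tautological.

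\textbf{Primitive stability $\Rightarrow$ primitive displacement.} The plan is to fix $o\in X$ and constants $K,c>0$ witnessing primitive stability, and take a primitive $x\in F_n$. Since $\norm{x}$, $\trans[X]{\rho(x)}$ and primitivity are all invariant under conjugation, I may assume $x$ is cyclically reduced, so that by Proposition \ref{prop:AxisInF2} the powers $x^k$ lie on $\Ax x$ and consecutive powers are at distance $\norm{x}$ along it. Applying the quasi-geodesic inequality to the images $\rho(x^k)o$ of these points gives $d\bigl(o,\rho(x)^n o\bigr)\ge \tfrac1K\, n\norm{x} - c$ for every $n\ge 1$. For the matching upper bound I would use only the soft fact that the displacement function is subadditive along powers of an isometry: for any $p\in X$ and any $n$, $d(o,\rho(x)^n o)\le 2\,d(o,p) + n\,d(p,\rho(x)p)$, hence, taking the infimum over $p$, $\limsup_n \tfrac1n d(o,\rho(x)^n o)\le \trans[X]{\rho(x)}$. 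Combining the two estimates and letting $n\to\infty$ yields $\trans[X]{\rho(x)}\ge \tfrac1K\norm{x}$, which is even stronger than what is needed: since $\norm{x}\ge 1$ for every nontrivial (hence every primitive) element, this gives $\trans[X]{\rho(x)}>\max\{0,\tfrac1K\norm{x}-c\}$, so $\rho$ is primitive displacing with constants $\tfrac1K$ and $c$.

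\textbf{Primitive displacement $\Rightarrow$ strong $Q$-conditions.} Let $K,c>0$ be the displacement constants, and fix $\lambda>0$; I must verify the two clauses of the $Q_\lambda$-conditions. For the first, primitive displacement forces $\trans[X]{\rho(x)}>0$ for every primitive $x$, and in a proper geodesic Gromov hyperbolic space an isometry with positive infimal displacement is loxodromic (it is neither elliptic nor parabolic, both of which have zero infimal displacement), so each $\rho(x)$ is loxodromic. For the second, if $x$ is primitive with $\trans[X]{\rho(x)}\le\lambda$, then $K\norm{x}-c<\lambda$, i.e. $\norm{x}<(\lambda+c)/K$; as $\norm{x}$ is a conjugacy invariant and $F_n$ has only finitely many conjugacy classes of bounded cyclically reduced length, only finitely many conjugacy classes of primitive elements have $\trans[X]{\rho(x)}\le\lambda$. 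Since $\lambda>0$ was arbitrary, this is exactly the strong $Q$-conditions, which by definition imply the $Q_\lambda$-conditions for every $\lambda>0$.

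\textbf{Main obstacle.} Everything here is routine except one point in the first implication: the paper's translation length $\trans[X]{\varphi}$ is the infimal displacement $\inf_p d(p,\varphi p)$, which in a general metric space need not equal the asymptotic translation length $\lim_n\tfrac1n d(o,\varphi^n o)$. The key observation that makes the argument go through is that only one direction of this comparison is needed — that the asymptotic quantity is at most the infimal displacement — and this holds in any metric space purely from subadditivity of the displacement function, with no hyperbolicity or a priori loxodromy of $\rho(x)$ required. Once this is isolated, the rest is bookkeeping.
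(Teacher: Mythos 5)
Your proof is correct and takes essentially the same approach as the paper: a quasi-geodesic lower bound along the powers of a cyclically reduced primitive element, compared against the infimal displacement via a soft upper bound (the paper uses the triangle inequality at the fixed basepoint and then notes basepoint-independence, you use subadditivity over an arbitrary basepoint --- the same elementary fact), followed by the finiteness of conjugacy classes of primitive elements of bounded cyclically reduced length. Your justification of loxodromy from positive infimal displacement is exactly the assertion the paper itself makes, so there is no substantive divergence.
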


\begin{proof}
   Let $\rho:F_n\to\Isom(X)$ be a primitive stable representation, and $o\in X$ and $M,C>0$ be the associated point and constants given by the definition. For any $u\in V$, since $u^n\in\Ax u$ for all $n>0$:
   \begin{equation}\label{*}
      \begin{split}
         d(o,\rho(u)^no)&\geq Md_e(u^n,1)-C\\
                        &=Mn|u|_e-C\\
                        &=M n \trans[e]{u}-C
      \end{split}
   \end{equation}
   By the triangular inequality, we get after dividing by $n$ and taking the limit on both sides
   \begin{equation*}
      d(o,\rho(u)o)\geq M \trans[e]{u}
   \end{equation*}

   Observe that this inequality also holds if we replace $o$ by any $o'\in X$ since $d(o,\rho(u)^no)$ and $d(o',\rho(u)^no')$ differ by at most the constant $2d(o,o')$. Hence
   \begin{equation*}
      \trans{\rho(u)}\geq M \trans[e]{u}
   \end{equation*}
   for all $u\in V$.

   Therefore $\rho$ is primitive displacing. Notice moreover that by Equation \eqref{*}, the orbit of $o$ by $\gen{\rho(u)}$ is a quasigeodesic, so $\rho(u)$ is hyperbolic by \cite[\S 21]{GhysDelaharpe1990}.
   The rest of the implications easily follow from primitive displacement.
\end{proof}

\subsection{Representations satisfying the \texorpdfstring{$Q$}{Q}-conditions are irreducible}

The following proposition is elementary, but has its importance because it proves that all the representations we consider in this paper are irreducible. By the last proposition, it is enough to check that the $Q_\lambda$-conditions imply irreducibility. 

Notice that for $d=3$, this is a corollary of \cite[\Thm 1.4]{TanWongZhang2008EndInvariants}.

\begin{figure}[ht]
\labellist
\small\hair 3pt

\pinlabel {$p$} [tr] at 13 62
\pinlabel {$A$} [tr] at 157 47
\pinlabel {$AB$} [t] <0pt,-1pt> at 171 100
\pinlabel {$B$} [tl] at 108 184

\pinlabel {$AB \mathcal{H}$} [b] at 73 161
\pinlabel {$\mathcal{H}$} [b] at 55 131
\pinlabel {$B \mathcal{H}$} [b] <2pt,0pt> at 32 95

\endlabellist
\centering
\includegraphics[scale=0.7]{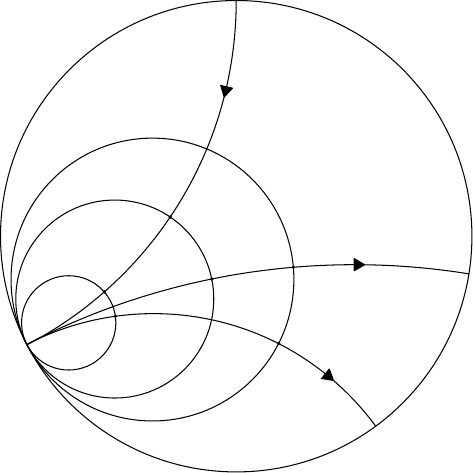}
   \caption{\emph{(Poincaré model of $\H$)} The axes of the pair $(AB,B)$ are in the same configuration as those of the pair $(A,B)$, but $\trans{AB} < \trans{A}$. Iterating gives a contradiction with the $Q$-conditions}
\label{fig:irreducibility}
\end{figure}

\begin{proposition}\label{prop:BQisIrr}
    For any $\lambda>0$, the $Q_\lambda$-conditions imply irreducibility.
\end{proposition}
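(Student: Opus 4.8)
The plan is to argue by contradiction: suppose $\rho$ satisfies the $Q_\lambda$-conditions but is reducible, so $\rho(F_2)$ fixes a point $\xi\in\partial\H$. Since all primitive elements have loxodromic image, each such $\rho(x)$ has a well-defined axis, and fixing $\xi$ forces $\xi$ to be one of the two endpoints of $\Ax\rho(x)$ for every primitive $x$. In particular $\Ax A$, $\Ax B$ and $\Ax AB$ all share the endpoint $\xi$. This is precisely the degenerate configuration suggested by Figure \ref{fig:irreducibility}: the three axes are mutually asymptotic at $\xi$. I would set up coordinates in the upper half-space model of $\H$ with $\xi=\infty$, so that $A$, $B$, $AB$ act on the horospheres centred at $\infty$ as (loxodromic) similarities of Euclidean $\R^{d-1}$, each being the composition of a homothety of ratio $e^{\trans{\cdot}}$ and a rotation fixing the other endpoint of its axis.

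The key step is a monotonicity argument along a sequence of primitive elements. Starting from the basis $(a,b)$, pass to the basis $(ab,b)$; since $ab$ and $b$ are again primitive with axes through $\xi$, and $(ab)b^{-1}=a$ forces $\Ax A$ through $\xi$ as well, the pair $(AB,B)$ sits in exactly the same geometric configuration as $(A,B)$. I want to show that the homothety ratio — equivalently the translation length — cannot increase, and in fact strictly decreases, as one iterates an appropriate Nielsen move $(U,V)\mapsto(UV,V)$ or $(U,V)\mapsto(U,UV)$. Concretely, working with the induced similarities $\bar A,\bar B$ of $\R^{d-1}\setminus\{p\}$ (where $p$ is the common other endpoint-data, if any, or after a further normalization), one computes that $\trans{AB}\leq\max\{\trans A,\trans B\}$ with equality only in a rigid case that the $Q$-conditions rule out; iterating the move in the direction that keeps decreasing the larger length produces infinitely many distinct conjugacy classes of primitive elements (the iterated Christoffel words $(ab)^k b$, say) with $\trans{\rho(\cdot)}$ bounded above, hence below $\lambda$ for $k$ large, contradicting condition (2) of the $Q_\lambda$-conditions. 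The finiteness is genuine because these Christoffel words lie in pairwise distinct extended conjugacy classes by Proposition \ref{prop:ECCofPrimBasisFareyEdgeCorrespondance}.

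The main obstacle I anticipate is making the strict-decrease inequality $\trans{UV}<\max\{\trans U,\trans V\}$ (or the right one-sided variant) clean in arbitrary dimension $d\geq 3$, where the "rotational parts" of the similarities $\bar U,\bar V$ interact. In $d=3$ this is a short $2\times 2$ trace computation; for general $d$ I would instead argue softly: if $\trans{UV}\geq\max\{\trans U,\trans V\}$ then comparing homothety ratios forces the rotational parts to conspire, and one shows that the orbit of a generic point under $\langle\bar U,\bar V\rangle$ would be forced into a configuration incompatible with $\langle A,B\rangle=\rho(F_2)$ being non-elementary — but $\rho(F_2)$ is automatically non-elementary once it contains the loxodromic $A$ and $B$ with distinct axes (distinct because $A,B,AB$ all loxodromic). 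The cleanest packaging is probably: fixing $\xi$, the "translation length along the $\xi$-direction" $\ell^\xi(\cdot)=\log(\text{homothety ratio})$ is a homomorphism-like quantity, $\ell^\xi(UV)=\ell^\xi(U)+\ell^\xi(V)$, while $\trans{\rho(w)}\geq|\ell^\xi(\rho(w))|$; then a suitable Nielsen path drives $\ell^\xi$ (hence a lower bound for the translation length) to stay bounded while producing infinitely many classes — but one must ensure $\ell^\xi$ does not blow up, which is where choosing the correct branch $(U,V)\mapsto(U,UV)$ versus $(UV,V)$, matching the sign of $\ell^\xi$, matters. I would cite \cite[Thm.\ 1.4]{TanWongZhang2008EndInvariants} as the $d=3$ precedent and present the general-$d$ version with the homothety-ratio bookkeeping above.
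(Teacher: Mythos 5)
Your core idea---work at the common fixed point $\xi\in\partial\H$, use the Busemann (log-homothety) homomorphism $\ell^\xi$ and its additivity, and run a Nielsen/Farey iteration to contradict the $Q_\lambda$-conditions---is exactly the mechanism of the paper's proof, which phrases it with horospheres centred at the fixed point ($d(\mathcal H,B\mathcal H)=\trans{B}$, etc.). But as written your argument has concrete gaps. First, the inequality $\trans{UV}\leq\max\{\trans{U},\trans{V}\}$ is false precisely when $\xi$ is the attracting (or the repelling) endpoint of \emph{both} axes: then $\ell^\xi(A)$ and $\ell^\xi(B)$ have the same sign, $\trans{AB}=\trans{A}+\trans{B}$, and no choice between the branches $(U,UV)$ and $(UV,V)$ can help, since both involve the product $UV$. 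You must first replace the basis $(a,b)$ by $(a^{-1},b)$ to reach the opposite-sign configuration; the paper does this in its second case ($p=A^+=B^+$ or $p=A^-=B^-$), and your proposal never performs this step. Second, your bound $\trans{\rho(w)}\geq\abs{\ell^\xi(\rho(w))}$ points the wrong way: a \emph{lower} bound on translation lengths cannot contradict condition (2). What you need---and what is true in every dimension, which also makes the worry about rotational parts moot---is that a loxodromic isometry fixing $\xi$ has $\xi$ as an endpoint of its axis, so its translation length equals $\abs{\ell^\xi}$ exactly; condition (1) ensures all primitive images are loxodromic, so this equality applies throughout.

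Third, ``bounded above, hence below $\lambda$ for $k$ large'' is a non sequitur, and your sample family $(ab)^k b$ has $\abs{\ell^\xi}$ growing linearly unless $\ell^\xi(AB)=0$, in which case $ab$ is a non-loxodromic primitive and you are already done. The correct iteration, once the signs are opposite, is the paper's: at each step pass to $(A,AB)$ or $(AB,B)$ according to the sign of $\ell^\xi(AB)$, so that the successive lengths are the remainders of the subtraction (Euclidean) algorithm applied to $(\trans{A},\trans{B})$; this sequence either reaches $0$ (a primitive element with non-loxodromic image, contradicting condition (1)) or strictly decreases to $0$, producing infinitely many pairwise distinct primitive classes of translation length below $\lambda$ (distinctness via the Farey--Christoffel correspondence, Proposition \ref{prop:ECCofPrimBasisFareyEdgeCorrespondance}), contradicting condition (2). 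With these repairs your argument coincides with the paper's proof; also note that the ``equality case'' you invoke never occurs: in the opposite-sign configuration the inequality is automatically strict, and in the same-sign configuration it simply fails.
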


\begin{proof}
   Suppose $\rho:F_2\to\Isom(\H)$ is not irreducible. We find $p\in\partial\H$ such that $p$ is fixed by $A$ and $B$ (see Figure \ref{fig:irreducibility}). Suppose first that $p=A^+=B^-$ or $p=A^-=B^+$.

   Choose $\mathcal{H}$ an horosphere centered at $p$. We see that $B\mathcal{H}$ and $AB\mathcal{H}$ are two other horospheres centered at $p$. Moreover, $d(\mathcal{H},B\mathcal{H})=\trans{B}$, $d(\mathcal{H},AB\mathcal{H})=\trans{AB}$ and $d(B\mathcal{H},AB\mathcal{H})=\trans{A}$. This yields
   \begin{equation*}
      \trans{AB}=\abs{\trans{A}-\trans{B}}<\max\{\trans{A},\trans{B}\}
   \end{equation*}

Since we supposed that $p=A^+=B^-$ or $p=A^-=B^+$, we can find $U\in \{A,B\}$ such that $p=(AB)^+=U^-$ or $p=(AB)^-=U^+$. We iterate this with the Farey edge $(A,AB)$ or $(AB,B)$ depending on which choice of $U$ was needed, and get a sequence of Christoffel words $u_n\in V$ such that $\trans{\rho(u_n)}$ is strictly decreasing. Moreover, this sequence is easily identified with the sequence of remainders in the subtraction algorithm for computing the greatest common divider of $\trans{A}$ and $\trans{B}$. Thus, either it reaches $0$ (in which case we have found a primitive element whose image is not hyperbolic), or the sequence goes to $0$. Hence, $\rho$ cannot satisfy both $Q_\lambda$-conditions at once.

   In the case where $p=A^+=B^+$ or $p=A^-=B^-$, the same argument applies by considering the basis $(A^{-1},B)$ instead.
\end{proof}

\section{The \texorpdfstring{$Q$}{Q}-conditions imply primitive stability}\label{sec:proofOfConverse}

In their proof, Lee and Xu used geometric arguments that can easily be generalized to upper dimensions (provided that the representations satisfy the half-length property), except for one lemma that makes use of trigonometric rules for right-angled hexagons, for which no equivalent is known in dimension higher than $4$ (in dimension $4$, Tan, Wong and Zhang found appropriate formulae, and in dimension $5$, work of Delgove and Retailleau suggests there may be similar formulae \cite{TanWongZhang2012,DelgoveRetailleau2014}). We will follow their proof and provide a geometric argument in order to complete it in upper dimensions.
The exact statement we prove is
\begin{theorem}\label{thm:BQ+HLinPS}
   There exists a function $\Lc_d:\HLP\to\R_{>0}$ depending only on $\PrimSys([\rho])$ and $E([\rho])$ that goes to $+\infty$ when $\PrimSys([\rho])\to 0$ such that any representation $\rho:F_2\to\Isom(\H)$ satisfying the half-length property and the $Q_{\Lc_d}$-conditions is primitive stable.
\end{theorem}

In fact, the proof would allow one to give an explicit formula for $\Lc_d$ in terms of $\PrimSys$ and $E$, although neither optimal nor enlightening. We do not write such a formula here for these reasons.

Every time a representation $\rho:F_2\to\Isom(\H)$ is mentioned in the rest of this section, it is assumed to satisfy the assumptions of Theorem \ref{thm:BQ+HLinPS}. We will frequently denote by $\lambda$ the quantity $\Lc_d([\rho])$ for ease. Notice that we cannot explain yet what conditions $\Lc_d$ needs to satisfy, so we will make them clear \emph{during the proof}, through statements like “if $\lambda$ is large enough”.

\begin{remark}
   If $\rho$ satisfies the $Q_{\Lc_d}$-conditions, then there are finitely many $[x]\in V$ such that $\trans{\rho(x)} < \Lc_d([\rho])$. Therefore, there is a level $N$ such that, for all $[x]\in V$ of level at least $N$, $\trans{\rho(x)}\geq\Lc_d([\rho])$. In what follows, $N$ will refer to such an integer.
\end{remark}

\subsection{Preliminary results}\label{par:TecLem}

We start with some useful notations and technical results. See Figure \ref{fig:tecNotations} for reference.

\begin{figure}[ht]
   \labellist
   \small\hair 2pt
   \pinlabel $\textcolor{heavyblue}{V}$ [l] at 179 160
   \pinlabel $\textcolor{heavyred}{U}$ [r] at 113 103
   \pinlabel $\textcolor{heavygreen}{UV}$ [l] at 119 129

   \pinlabel $\side{L}$ [b] at 77 118
   \pinlabel $\side{V}$ [tr] at 172 35
   \pinlabel $\side{U}$ [t] at 58 193

   \pinlabel $\textcolor{heavyblue}{\hyp{L}{V}}$ [b] at 41 156
   \pinlabel $\textcolor{heavyblue}{\hyp{V}{V}}$ [br] <3pt,-1pt> at 60 31
   \pinlabel $\textcolor{heavyred}{\hyp{L}{U}}$ [t] <0pt,-1pt> at 38 95
   \pinlabel $\textcolor{heavyred}{\hyp{U}{U}}$ [t] at 152 190
   \pinlabel $\textcolor{heavygreen}{\hyp{V}{UV}}$ [l] at 67 28
   \pinlabel $\textcolor{heavygreen}{\hyp{U}{UV}}$ [br] at 35 195
   \endlabellist
   \centering
   \includegraphics[scale=.70]{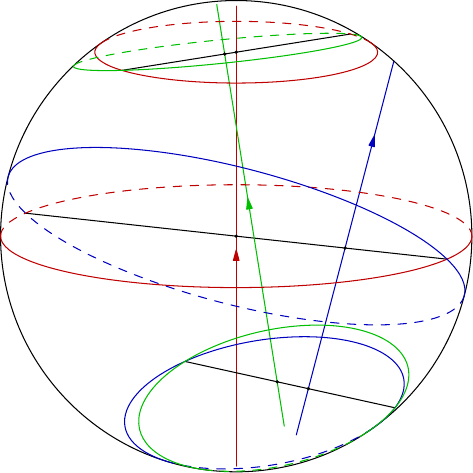}
   \caption{The right-angled hexagon $\hex(U,V)$ decorated by its orthogonal hyperplanes. A plane of the same color as the axis of an isometry is orthogonal to this axis. The black lines are contained in the intersection of the two hyperplanes around it}
   \label{fig:tecNotations}
\end{figure}

\begin{definition} \label{def:tecNotations}
   For a given Farey edge $(U,V)$, and a pair $(s,t)\in \{U,V,L\}\times\{U,V,UV\}$ such that $\side{s}$ meets $\Ax t$ orthogonally, we denote by
\begin{itemize}
\item $\hyp{s}{t}$ the hyperplane orthogonal to $\Ax t$ that contains $\side{s}$;
\item $\hem{s}{t}$ the closed half-space whose boundary is $\hyp{s}{t}$ that does not contain $\side{s'}$ if $\side{s'}$ is the other perpendicular to $\Ax t$;
\item $\ohem{s}{t} = \hem{s}{t}-\hyp{s}{t}$ the open half-space that is the interior of $\hem{s}{t}$.
\end{itemize}
\end{definition}

Here is an elementary lemma of which we will make repeated use in what follows.

\begin{figure}[ht]
   \centering
   \begin{subfigure}[t!]{.45\textwidth}
      \labellist
      \pinlabel $\Delta$ [b] at 165 108
      \pinlabel $\P$ [t] at 162 92
      \pinlabel $\P'$ [b] at 172 151
      \pinlabel $\Q$ [bl] <-1pt,-1pt> at 70 45
      \endlabellist
      \includegraphics[scale=.70]{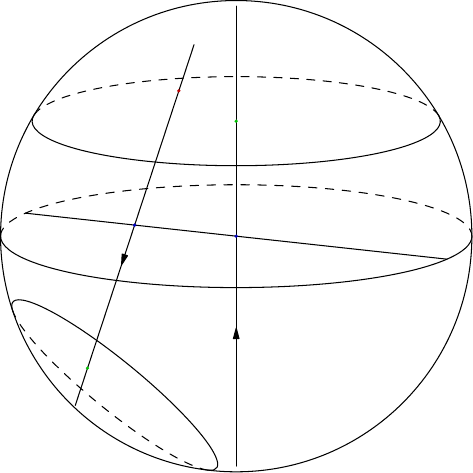}
      \hfill
      \caption{}
      \label{fig:hypNotInt1}
   \end{subfigure}
   \begin{subfigure}[t!]{.45\textwidth}
      \hfill
      \labellist
      \pinlabel $\Delta$ [b] at 165 108
      \pinlabel $\P$ [t] at 162 92
      \pinlabel $\P'$ [b] at 172 151
      \pinlabel $\Q$ [l] at 64 62
      \endlabellist
      \includegraphics[scale=.70]{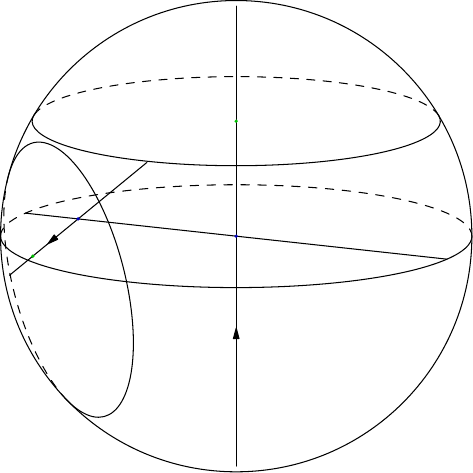}
      \caption{}
      \label{fig:hypNotInt2}
   \end{subfigure}
   \caption{In these two configurations, we can guarantee that some hyperplanes do not intersect}
   \label{fig:hypNotInt}
\end{figure}

\begin{lemma}\label{lm:hypNotInt}
For any $\mu >0$, there exists some $R(\mu)>0$ such that for every hyperplanes $\P$, $\P'$ and $\Q$, and for every line $\Delta\subset\P$, if
\begin{equation*}
\left\{\begin{array}{l}d(\P,\P') = d(\Delta,\P') \geq \mu\\d(\Delta,\Q)\geq R(\mu)\end{array}\right.
\end{equation*}
then
\begin{enumerate}[(i), ref = \roman*]
   \item If $\Perp(\Delta,\Q)\cap \P' \neq\varnothing$, then $\P\cap\Q = \varnothing$ (see Figure \ref{fig:hypNotInt1}); \label{item:lm1}
   \item If $\angle_\Delta(\Perp(\Delta,\Q),\Perp(\Delta,\P'))>\frac{\pi}{2}$, then $\P'\cap \Q = \varnothing$ (see Figure \ref{fig:hypNotInt2}). \label{item:lm2}
\end{enumerate}
\end{lemma}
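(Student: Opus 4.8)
The plan is to work in the projective model and reduce both statements to elementary hyperbolic geometry of hyperplanes (equivalently, of their polars in $\RP^d\setminus B^d$) by exploiting the quantitative gap hypothesis $d(\Delta,\Q)\ge R(\mu)$: when $\Q$ is very far from $\Delta$, its polar is very close to the affine hyperplane spanned by $\Delta$ in $\R^d$, and this closeness can be made as strong as we like by choosing $R(\mu)$ large. So first I would fix notation: let $\Pi$ be the affine hyperplane of $\R^d$ containing $\Delta$ (thinking of $\Delta$ as a Euclidean chord), and recall that two hyperplanes in $\H$ are disjoint in $\overline{\H}$ if and only if the Euclidean segment between their polars (or an appropriate separation condition on the hyperplanes themselves) witnesses non-intersection inside $B^d$; concretely I would phrase everything in terms of the standard trichotomy (intersecting / asymptotic / ultraparallel) and the fact that $d(\cdot,\cdot)\ge\mu$ forces ultraparallel with a definite distance.

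For part \eqref{item:lm1}: the hypotheses say $\P'$ stays at distance $\ge\mu$ from $\P$ (and this distance is realized along $\Delta$, so $\P'$ is ``anchored'' near $\Delta$ at distance $\ge \mu$), while $\Q$ is pushed to distance $\ge R(\mu)$ from $\Delta$; the extra assumption $\Perp(\Delta,\Q)\cap\P'\ne\varnothing$ pins down on which side of $\P'$ — hence roughly on which side of $\Delta$ — the hyperplane $\Q$ lies and how its common perpendicular with $\Delta$ is oriented. I would argue by contradiction: if $\P\cap\Q\ne\varnothing$, pick a point $z$ in the intersection; then $z\in\P$, so $d(z,\P')\ge\mu$ by the first hypothesis, but on the other hand $z\in\Q$ lies on the far side and the common perpendicular from $\Delta$ to $\Q$ meeting $\P'$ forces $\Q$ (hence $z$) to be within a bounded-in-$\mu$ neighborhood of $\P'$ once $R(\mu)$ is large — a triangle/quadrilateral in $\overline{\H}$ with too much angle or an ultraparallel pair forced to intersect. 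Making $R(\mu)$ large enough that the relevant distance estimate beats $\mu$ gives the contradiction. Part \eqref{item:lm2} is similar but cleaner: the angle condition $\angle_\Delta(\Perp(\Delta,\Q),\Perp(\Delta,\P'))>\pi/2$ says the feet on $\Q$ and on $\P'$ of perpendiculars from $\Delta$ diverge; combined with $d(\Delta,\Q)$ huge and $d(\Delta,\P')\ge\mu$, a right-angled hexagon (or pentagon) trigonometry estimate — or, to stay dimension-free, a direct comparison argument in the plane containing $\Delta$ and the two common perpendiculars — shows $\P'$ and $\Q$ cannot meet, since if they did one would get a hyperbolic polygon with angle sum exceeding what the side lengths permit.

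The uniform choice of $R(\mu)$ is the only real content: I would extract it from the monotone dependence of, say, $d(\P',\Q)$ (when positive) on the data, or equivalently from the explicit formula for the distance between two ultraparallel hyperplanes in terms of their polars' Euclidean positions, checking that sending $d(\Delta,\Q)\to\infty$ with $d(\Delta,\P')\ge\mu$ fixed forces $d(\P',\Q)\to\infty$ under each side/angle hypothesis. Because everything is intrinsic to the (at most $3$-dimensional) configuration spanned by $\Delta$, $\Perp(\Delta,\Q)$ and $\Perp(\Delta,\P')$, the estimate is genuinely dimension-free and $R(\mu)$ depends on $\mu$ alone. The main obstacle I anticipate is bookkeeping the sign/side conventions (which half-space, which orientation of each common perpendicular) so that ``$\Perp(\Delta,\Q)\cap\P'\ne\varnothing$'' and the angle inequality are used with the correct sign; once those are nailed down, the geometric estimates are routine. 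I would organize the write-up as: (1) reduce to a planar picture in the span of the three perpendiculars; (2) translate each hypothesis into a position statement about $\Q$ relative to $\P'$ and $\Delta$; (3) assume $\P\cap\Q\ne\varnothing$ (resp. $\P'\cap\Q\ne\varnothing$), derive a hyperbolic polygon with forbidden angle sum once $R(\mu)$ is large; (4) record the resulting $R(\mu)$.
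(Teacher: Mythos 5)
Your overall philosophy (projective model, far hyperplanes become ``visually small'', the gap $\mu$ provides the buffer) is the same as the paper's, but the concrete mechanism you propose does not work, and it is exactly the quantitative core that you leave as ``routine''. The paper normalizes so that $\Delta$ and $\Perp(\Delta,\P')$ are diameters of the ball and proves an explicit estimate: the Euclidean diameter of $\Q$ in this configuration is at most $4e^{-d(\Delta,\Q)}$, so $\Q$ is a tiny Euclidean set clustered at the forward endpoint of $\Perp(\Delta,\Q)$; each hypothesis then places that endpoint at Euclidean distance at least $\tanh\mu$ from $\P$ (case (1)) or from $\P'$ (case (2)), and one takes $R(\mu)$ with $4e^{-R(\mu)}+2e^{-2R(\mu)}<\tanh\mu$. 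Your contradiction for case (1) instead rests on the claim that the hypotheses force $\Q$ (hence a putative $z\in\P\cap\Q$) into a bounded-in-$\mu$ neighborhood of $\P'$, to contradict $d(z,\P')\geq\mu$. That claim is false: the hypotheses push $\Q$ \emph{far} from $\P'$ (its closest point to $\Delta$ is at distance roughly $d(\Delta,\Q)-d(\Delta,\P')$ from $\P'$), and in case (1) the hyperplane $\Q$ may even intersect $\P'$ (only $\P\cap\Q=\varnothing$ is asserted), with points of $\Q$ on the near side of $\P'$ lying arbitrarily far from $\P'$ in the hyperbolic metric. So no inequality of the form $d(z,\P')<\mu$ is available, and the intended contradiction evaporates; the disjointness really has to be detected through the Euclidean/visual smallness of $\Q$ (or an equivalent shadow estimate), which your sketch never pins down.

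Two further points need attention. First, the reduction ``to the plane containing $\Delta$ and the two common perpendiculars'' is not available: those three lines generically span a $3$-dimensional, not $2$-dimensional, totally geodesic subspace, and, more importantly, statement (1) concerns the whole hyperplane $\P$, which a priori is only known to contain $\Delta$ and could be tilted out of any slice. To control it you must use the hypothesis $d(\P,\P')=d(\Delta,\P')$, which forces $\P$ to be orthogonal to $\Perp(\Delta,\P')$ (any tilt strictly decreases $d(\P,\P')$ below $d(\Delta,\P')$); only then do the polars of $\P$, $\P'$, $\Q$ all lie in the slice spanned by the three lines and disjointness can be tested there. Second, the ``right-angled hexagon/pentagon trigonometry'' you invoke for case (2) is applied to a polygon that is not planar in this $3$-dimensional configuration, and the unavailability of such trigonometric identities in dimension $d\geq 5$ is precisely the obstruction this lemma is designed to circumvent; the coarse Euclidean estimate in the Klein model is what replaces it.
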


\begin{proof}
   We suppose that we are in the configuration depicted in Figure \ref{fig:hypNotInt}, where $\Delta$ and $\Perp(\Delta,\P')$ are diameters of $B^d$.
   Let $\Pi$ be the projective subspace spanned by $\Delta$ and $\Perp(\Delta,\Q)$, we will denote by $B^2$ the intersection $B^d\cap\Pi$, and by $\Cc$ the circle $\partial B^2$. We claim that the Euclidean diameter of $\Q$ in $B^d$ is the length of the interval $\Q\cap B^2$. Indeed, since $\Q$ is a ball obtained by intersecting $B^d$ with a hyperplane, its diameter is the length of its intersection with any $2$-dimensional subspace containing the Euclidean center $O$ of $B^d$, which is the case of $\Pi$ since we assumed that $\Delta\subset\Pi$ is a diameter of $B^d$.

\begin{figure}[ht]
   \labellist
   \small\hair 3pt
   \pinlabel $\Cc$ <2pt,-1pt> [br] at 30 149
   \pinlabel $\Delta$ <0pt,1pt> [t] at 47 95
   \pinlabel $x$ <-2pt,0pt> [b] at 224 95
   \pinlabel $y$ <0pt,-2pt> [l] at 95 188
   \pinlabel $\Perp(\Delta,\Q)$ <1.5pt,0pt> [r] at 159 63
   \pinlabel $\Q$ <1.5pt,1.5pt> [tr] at 118 156
   \pinlabel $O$ <1pt,0pt> [tr] at 95 95
   \pinlabel $P$ <2pt,0pt> [t] at 205 95
   \pinlabel $M$ <0.5pt,-1pt> [tr] at 159 95
   \pinlabel $N$ <-1.5pt,0pt> [bl] at 159 150
   \pinlabel $J$ [tr] at 159 128
   \pinlabel $J_1$ <2pt,1pt> [tr] at 177 116
   \pinlabel $J_2$ [t] at 86 179
   \endlabellist
   \centering
   \includegraphics[scale=.80]{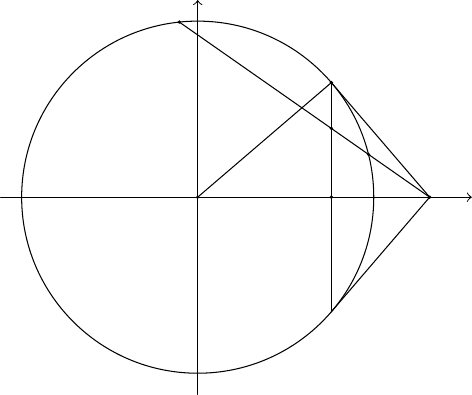}
   \caption{Let $P$ denote the polar of $\Perp(\Delta,\Q)$ in $B^2$. The other points are defined on the diagram. In the right triangle $ONM$, Pythagoras's theorem gives $\norm{OM} = \sqrt{1-(d/2)^2}$. Since the triangles $ONM$ and $ONP$ are similar, we have $\norm{OP} = 1/\sqrt{1-(d/2)^2}$}.
   \label{fig:eucDiamOfHypComputation}
\end{figure}

   Therefore, it is enough to work in $B^2$ and find the Euclidean length of the segment $\overline{\Q\cap B^2}$. We take the notations introduced in Figure \ref{fig:eucDiamOfHypComputation}, and we let $d$ be the Euclidean length of $\Perp(\Delta,\Q)$ and $h$ be the Euclidean length of the segment $JN$. We also choose Euclidean coordinates such that $O$ is the origin, $\Delta$ is the $x$-axis, and $B^2$ is the unit disk. Elementary euclidean geometry (as explained in the caption of Figure \ref{fig:eucDiamOfHypComputation}) give the coordinates of the points $P$ and $J$, which allows to find the coordinates of $J_1$ and $J_2$ by finding for which $t\in\R$ does the point $P + t PJ$ lie in the circle $\Cc$. If we let $t_1$ and $t_2$ be the solutions of the associated quadratic equation, we want to compute the length of
   \begin{equation*}
   J_1J_2 = \frac{\frac d 2 (t_2 - t_1)}{\sqrt{1 - \frac {d^2} 4}}\left(-\frac d 2, \left(1-\frac{2d} h\right)\sqrt{1 - \frac {d^2} 4}\right)
   \end{equation*}
   After solving the equation for $t_2-t_1$ and simplifying, we get
   \begin{align*}
      \norm{J_1J_2} &= 2\sqrt{\frac{\frac{2h}d\left(2-\frac{2h}2\right)}{1+\left(\frac{4}{d^2}-1\right)\left(1-\frac{2h}{d}\right)^2}}
      \leq 2\sqrt{\frac{2h}d\left(2-\frac{2h}2\right)}\\
      &\leq 4\sqrt{\frac{h}{d}}
   \end{align*}

   Since $d(\Delta,\Q)=\frac12\log(\frac d h - 1)$, it follows that the Euclidean diameter of $\Q$ in this configuration is smaller than $4e^{-d(\Delta,\Q)}$. Therefore, the Euclidean distance between the forward endpoint of $\Perp(\Delta,\Q)$ and the boundary of $\Q$ is smaller than $4e^{-d(\Delta,\Q)} + 2e^{-2d(\Delta,\Q)}$.

   Each of the two cases of the lemma ensures that the Euclidean distance between the forward endpoint of $\Perp(\Delta,\Q)$ and $\P$ (in the first case) or $\P'$ (in the second case) is at least $\tanh \mu$. Therefore, choosing $R(\mu)$ such that $4e^{-R(\mu)} + 2e^{-2R(\mu)} < \tanh \mu$ is enough.
\end{proof}

The following is a technical lemma in hyperbolic geometry that will be useful later on. The proof of the general case was kindly suggested by Konstantinos Tsouvalas.

\begin{figure}[ht]
   \labellist
   \small\hair 3pt
   \pinlabel $g^+$ by .5 1 at 61 14
   \pinlabel $g^-$ by -1 0.3 at 204 47
   \pinlabel $h^{-1}g^+$ by 1 .4 at 4 87

   \pinlabel $o$ by -1.5 0 at 81 40
   \pinlabel $o'$ by 1 -.4 at 66 45
   \pinlabel $ho'$ by -.3 -1 at 183 59

   \pinlabel $g$ [b] at 125 63
   \pinlabel $h$ [b] at 112 114
   \pinlabel $h^{-1}gh$ [bl] at 56 68
   \endlabellist
   \centering
   \includegraphics[scale=.70]{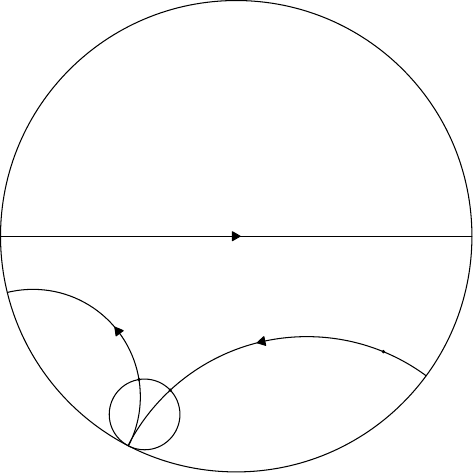}
   \caption{\emph{(Poincaré model of $\H$)} If $hg^+ = g^-$, then if we choose $o = o_n$ and the corresponding $o'_n$ so that $g^nho'_n = o_n$, we see that $\trans{g^nh}\to 0$}
   \label{fig:transLength}
\end{figure}

\begin{lemma}\label{lm:transLength}
If $g,h\in\Isom(\H)$ are hyperbolic, then there exists $t\in\R$ such that
\begin{equation*}
\trans{g^nh} = n\trans{g} + t + o(1)
\end{equation*}
unless $hg^+ = g^-$, in which case $\trans{g^nh}\to 0$.
\end{lemma}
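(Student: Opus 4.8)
The plan is to work in the upper half-space model and use the classical formula for translation length in terms of the displacement of a cleverly chosen basepoint. Recall that for a loxodromic $g$, the translation length is $\trans{g} = \inf_{x}d(x,gx)$, attained along $\Ax g$, and that for any point $o$ one has $d(o,go) = \trans{g} + 2d(o,\Ax g) + o(1)$ as $d(o,\Ax g)$ stays bounded but more precisely $\cosh d(o,go) = \cosh\trans{g}\cosh^2 r - \sinh^2 r$ where $r = d(o,\Ax g)$; in particular $d(o,go)$ and $\trans{g} + 2d(o,\Ax g)$ differ by a quantity that tends to $0$ as $\trans{g}\to\infty$ with $r$ fixed, but here I will instead control things directly. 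The key point is this: choose the basepoint $o$ on $\Ax h$, specifically let $o$ be the foot of the perpendicular from $\Ax h$ to $\Ax g$, so that $ho$ is another point and $d(o, h o) \geq \trans{h}$ with the segment $[o,ho]$ meeting $\Ax g$ in a controlled way.

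First I would dispose of the degenerate case. If $hg^+ = g^-$, pick basepoints $o_n \in \Ax g$ with $o_n \to g^+$ along the axis. Then $g^n$ translates $o_n$ toward $g^+$, and $h$ maps a neighborhood of $g^+$ to a neighborhood of $g^-$; choosing $o_n'$ with $g^n h o_n' = o_n$ and estimating in horoball coordinates at $g^+$ shows $d(o_n, g^n h o_n)\to 0$, as indicated in Figure \ref{fig:transLength}. Hence $\trans{g^n h}\to 0$ in this case.

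Now assume $hg^+ \neq g^-$. I would use the following attracting/repelling dynamics argument. The element $g^n h$ has attracting fixed point close to $g^+$ and repelling fixed point close to $(g^n h)^{-1}(g^-)$, which converges to $h^{-1}g^+$; more precisely, since $hg^+\ne g^-$, the points $g^+$ and $h^{-1}g^+$ are distinct from each other and from the relevant data, so $\Ax(g^n h)$ converges to the geodesic $\gamma$ from $h^{-1}g^+$ to $g^+$ (as unoriented geodesics, in the Hausdorff topology on compact subsets, uniformly on compacta away from the endpoints). Fix a basepoint $o$ on $\gamma$. Then $d(o, g^n h o) = \trans{g^n h} + 2\,d(o, \Ax(g^n h)) + o(1)$, and since $\Ax(g^n h)\to\gamma \ni o$, the term $d(o,\Ax(g^n h))\to 0$. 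So it suffices to show $d(o, g^n h o) = n\trans{g} + t + o(1)$ for some constant $t$. Write $d(o, g^n h o) = d(g^{-n} o, h o)$. Now $ho$ is a fixed point (independent of $n$), and $g^{-n}o$ runs off to $g^-$ along a fixed quasigeodesic (the $g$-orbit of $o$, which fellow-travels $\Ax g$); a standard computation of the distance from a point far out toward an endpoint of a geodesic to a fixed point gives $d(g^{-n}o, ho) = d(g^{-n}o, \text{proj}_{\Ax g} ho ) + d(\text{proj}_{\Ax g}ho, ho) + o(1)$, wait — rather, using the formula $d(g^{-n}o, ho) = (g^{-n}o \mid ho)_? $ — the cleanest route is: $d(g^{-n}o, ho) - d(g^{-n}o, o) \to \beta_{g^-}(ho, o)$, the Busemann function at the endpoint $g^-$ toward which $g^{-n}o$ escapes, because $ho \ne g^-$ (this uses $hg^+\ne g^-$, equivalently $g^+ \ne h^{-1}g^-$ — one must check $ho$ and the escaping direction are compatible, i.e. $g^{-n}o \not\to ho$, which holds since $ho$ is a finite point). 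And $d(g^{-n}o, o) = d(o, g^n o) = n\trans{g} + s + o(1)$ for a constant $s$ depending only on $d(o,\Ax g)$. Combining, $d(o,g^n h o) = n\trans{g} + (s + \beta_{g^-}(ho,o)) + o(1)$, which is the desired form with $t = s + \beta_{g^-}(ho, o)$.

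The main obstacle, and the step requiring the most care, is controlling the term $d(o,\Ax(g^n h))$ and justifying that $\Ax(g^n h)$ really converges to the geodesic $[h^{-1}g^+, g^+]$; this is where the hypothesis $hg^+\ne g^-$ is used in an essential way, since that is exactly the condition guaranteeing the two limiting endpoints $h^{-1}g^+$ and $g^+$ are distinct so that a limiting geodesic (rather than a single point) exists and $g^n h$ stays loxodromic with translation length bounded below. A convenient way to make this rigorous without trigonometric formulas is the ping-pong/north-south-dynamics estimate: for $n$ large, $g^n h$ maps the complement of a small ball around $h^{-1}g^+$ into a small ball around $g^+$, with contraction constants one can track, and then apply the standard lemma relating such dynamics to the location of the axis and to the translation length (e.g. via the displacement of a point on the "bridge" between the two balls). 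The rest — the Busemann function limit and the asymptotics $d(o,g^n o) = n\trans{g} + s + o(1)$ — are routine hyperbolic geometry. I would present the argument in the projective (Klein) model to stay consistent with the rest of Section \ref{sec:proofOfConverse}, translating the horoball estimates accordingly.
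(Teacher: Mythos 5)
Your handling of the degenerate case $hg^+=g^-$ is the same horosphere argument as the paper's. For the main case you propose a genuinely different, synthetic route (north--south dynamics of $g^nh$ on $\partial\H$, convergence of axes, Busemann functions), whereas the paper argues linearly in $\PO(d,1)$: it writes $g$ and $h$ in a block decomposition adapted to $\vect g^+\oplus U\oplus\vect g^-$, shows $\mu_1(g^nh)=a\,\mu_1(g)^n(1+o(1))$ with $a\neq 0$ precisely because $hg^+\neq g^-$, and concludes via $\trans{\phi}=\log\mu_1(\phi)$. Your approach, once correct, has the advantage of making sense in any proper CAT($-1$) (or Gromov hyperbolic, coarsely) setting, while the paper's is a short exact computation special to $\Isom(\H)$.

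However, one step of your argument fails as written: you misidentify the limiting geodesic. The repelling fixed point of $g^nh$ is the attracting fixed point of $(g^nh)^{-1}=h^{-1}g^{-n}$, and since $g^{-n}$ fixes $g^-$ we have $(g^nh)^{-1}(g^-)=h^{-1}g^-$ for every $n$; it does not converge to $h^{-1}g^+$. For large $n$ the repelling fixed points accumulate at $h^{-1}g^-$ and the attracting ones at $g^+$, so $\Ax (g^nh)$ converges to the geodesic $\gamma$ with endpoints $h^{-1}g^-$ and $g^+$ --- and the hypothesis $hg^+\neq g^-$ is exactly the statement that these two endpoints are distinct (it says nothing about $g^+$ versus $h^{-1}g^+$). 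With your basepoint $o$ chosen on the geodesic $(h^{-1}g^+,g^+)$, the key estimate $d(o,\Ax(g^nh))\to 0$ is false in general, and with it the identity $\trans{g^nh}=d(o,g^nho)+o(1)$. The error is local: replace $h^{-1}g^+$ by $h^{-1}g^-$ throughout, and the rest of your computation goes through --- the sandwich $\trans{\phi}\leq d(o,\phi o)\leq \trans{\phi}+2d(o,\Ax\phi)$ (you do not need the exact $\cosh$ formula you quote, which anyway only holds for loxodromics without rotational part), the Busemann limit $d(g^{-n}o,ho)-d(g^{-n}o,o)\to\beta_{g^-}(ho,o)$ (no extra hypothesis needed there, since $ho$ is an interior point), and $d(o,g^no)=n\trans{g}+s+o(1)$, the rotational part of $g$ contributing only a bounded cross term. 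You must still carry out the ping-pong estimate you defer to, since it is what guarantees both that $g^nh$ is loxodromic for large $n$ and that its fixed points converge as claimed; that is where the real work of your route lies.
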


\begin{proof}
We start with the case where $hg^+ = g^-$, in which case we have that $\Ax g$ and $h^{-1}\Ax g$ share $g^+$ as an endpoint. Thus, if we denote by $\hor_o$ (for $o\in\Ax g$) the horosphere centered at $g^+$ passing through $o$, then $d(o', o) \to_{o\to g^+} 0$, if $o' = \hor_o\cap h^{-1}\Ax g$.

   Define $f(o) = d(o,ho')$. This is a non-negative function $\R\to\R_+$ that strictly decreases, reaches $0$ exactly at one point $o_0$, and then strictly increases (up to choosing a parametrization of $\Ax g$ compatible with its orientation). Since $f\to_{g^{\pm}}\infty$, after restriction to the geodesic ray $[o_0,g^+)$, $f$ becomes an increasing bijection onto $\R_+$, and for every $n$, there exists a unique $o_n\in[o_0,g^+)$ such that $d(o_n,ho'_n) = n\trans{g}$. Observe that $o_n\to g^+$ since $f(o_n) = n\trans{g}\to +\infty$.

Thus, $g^nho'_n = o_n\in\hor_{o_n}$. Hence, $\trans{g^nh}\leq d(o'_n,o_n)\to 0$.

\medskip

Suppose now that $hg^+ \neq g^-$. As in Fact \ref{fact:hypIsomDec}, we let $U$ be the orthogonal in $\R^{d,1}$ of $\Span g^+\oplus\Span g^-$, so that, in the decomposition $\R^{d,1} = \Span g^+\oplus U\oplus \Span g^-$, we have
   \begin{equation*}
      g = \begin{bsmallmatrix}\mu &0&0\\0&O&0\\0&0&\mu^{-1}\end{bsmallmatrix}
   \end{equation*}
for some $\mu > 1$ and $O\in\O(d-1)$.

Now, since $hg^+\neq g^-$, we can also write $h$ as
\begin{equation*}
   h = \begin{bmatrix}\sigma & w_1\\w_2& M\end{bmatrix} \in \PO(d,1)
\end{equation*}
in the decomposition $\R^{d,1} = \Span g^+ \oplus (U\oplus\Span g^-)$, where $\sigma \geq 0$ (up to changing the sign of $\sigma$, $w_1$, $w_2$ and $M$). Notice moreover that we must have $\sigma > 0$, as if $\sigma = 0$, then $hg^+\in \Proj(U\oplus\Span g^-)\cap\partial B^d = \{g^-\}$, so $hg^+ = g^-$, contradicting our assumption.

   By Fact \ref{fact:hypIsomDec}, a hyperbolic isometry $\gamma\in\PO(d,1)$ has a unique maximal positive eigenvalue $\mu_1(\gamma) > 0$. It is a classical result that $\trans{\gamma} = \log\mu_1(\gamma)$ (see for instance \cite[\Lm 2.8]{CooperDelp2010}).

   We have, after letting $N = \begin{psmallmatrix}O&0\\0&\mu^{-1}\end{psmallmatrix}$:
   \begin{equation} \label{eq:renormSeqConverges}
      \begin{aligned}
         \mu^{-n} g^nh &= \mu^{-n} \begin{bmatrix}\mu & 0\\ 0 & N\end{bmatrix}^n \begin{bmatrix}\sigma & w_1\\ w_2 & M\end{bmatrix}
         = \begin{bmatrix}\sigma & w_1\\ \mu^{-n}N^n w_2 & \mu^{-n}N^nM\end{bmatrix} \\
         &\tosub{n\to\infty} \begin{bmatrix}\sigma & w_1 \\ 0 & 0\end{bmatrix}
      \end{aligned}
   \end{equation}

Now, observe that if we choose $(v_n)_n$ a sequence of eigenvectors of $g^nh$ for the eigenvalues $\mu_1(g^nh)$ of unit length for a fixed scalar product, we can choose a subsequence $(v_{\theta(n)})_n$ that converges to a unit length vector $v_\infty$. Since we have
\begin{equation*}
   \mu^{-n}g^nhv_n = \mu^{-n}\mu_1(g^nh)v_n = \mu_1(\mu^{-n} g^nh)v_n
\end{equation*}
the left hand side has a limit by Equation \ref{eq:renormSeqConverges}, and since we also have $v_{\theta(n)}\to v_\infty\neq 0$, we deduce that $\mu_1(\mu^{-\theta(n)} g^{\theta(n)}h)$ also admits a limit. Hence, if we write $v_{\infty} = \begin{bsmallmatrix}e\\f\end{bsmallmatrix}$, we obtain the following identity after taking the limit:
\begin{equation*}
   \begin{bmatrix}\sigma & w_1\\0&0\end{bmatrix}\begin{bmatrix}e\\f\end{bmatrix} = \lim_n\mu_1(\mu^{-\theta(n)}g^{\theta(n)}h)\begin{bmatrix}e\\f\end{bmatrix}
\end{equation*}
Therefore, $f = 0$ and $\lim_n\mu_1(\mu^{-\theta(n)}g^{\theta(n)}h) = \sigma$.
Since this limit does not depend on the choice of the converging subsequence, we obtain that $\mu^{-n}\mu_1(g^nh)\to \sigma$, which gives the result after taking the logarithm and noticing that $\mu = \mu_1(g)$.
\end{proof}

\subsection{Narrow Farey edges}

In the following, we will frequently need to apply Lemma \ref{lm:hypNotInt} to configurations coming from Farey edges, so we will assume that $\Lc_d([\rho])\geq 6R(\frac16\PrimSys([\rho]))$, which will allow us to use Lemma \ref{lm:hypNotInt} in all cases where we will need it below. This is our first constraint on $\Lc_d$, but we will have to assume it is larger in Proposition~\ref{prop:finitenessNonAcute} below.

\begin{definition} \label{def:narrow}
A Farey edge $(U,V)$ is said to be \emph{narrow} if one of the following is satisfied:
\begin{itemize}
\item $U^+\in \ohem{L}{V}$
\item $V^-\in \ohem{L}{U}$
\end{itemize}
\end{definition}
\begin{remark}
It is easy to see that these two conditions are equivalent. Indeed, a Farey edge $(U,V)$ is narrow if and only if $\angle_{\side{L}}(\Ax U,\Ax V) < \frac{\pi}{2}$.
\end{remark}

We start by making some elementary observations about narrow Farey edges. The first one is that if a narrow basis has a large enough level, then both its successors in the Christoffel construction are narrow Farey edges.

\begin{lemma}
Let $(U,V)$ be a Farey edge. If $\side{V}\cap\hem{U}{U}=\varnothing$ then $L(U,V)$ is narrow. Similarly, if $\side{U}\cap\hem{V}{V}=\varnothing$, then $R(U,V)$ is narrow.
\end{lemma}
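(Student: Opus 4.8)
The plan is to recast the narrowness of $L(X,Y)=(X,XY)$ as a statement about the position of one endpoint of $\Ax XY$ relative to the hyperplane $\hyp{X}{X}$, and then to read off that position from the hypothesis. First I would observe that $\Perp(\Ax X,\Ax XY)=\side{X}$, so this common perpendicular is exactly the $L$-side of the hexagon $\hex(X,XY)$ attached to the Farey edge $L(X,Y)$; hence, by the remark following Definition~\ref{def:narrow}, $L(X,Y)$ is narrow if and only if $\angle_{\side{X}}(\Ax X,\Ax XY)<\tfrac{\pi}{2}$. I would then translate this angle condition into a half-space condition. Since $\side{X}\subset\hyp{X}{X}$, $\hyp{X}{X}\perp\Ax X$, and $\Ax XY$ meets $\side{X}$ orthogonally at its foot $p_3:=\side{X}\cap\Ax XY\in\hyp{X}{X}$, the line $\Ax XY$ either lies in $\hyp{X}{X}$ (the excluded case where the angle is exactly $\tfrac\pi2$) or crosses it transversally at $p_3$. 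In the latter case, by unwinding the orientation conventions of $\hex(X,XY)$ and identifying which of the two half-spaces bounded by $\hyp{X}{X}$ is $\hem{X}{X}$ --- using that the half-length property makes $\hex(X,Y)$ positively oriented, so that along $\Ax X$ the feet of $\side{L}$ and $\side{X}$ occur in the order dictated by $\side{L}<\side{X}$ --- one gets the clean equivalence: $L(X,Y)$ is narrow if and only if $(XY)^+\in\ohem{X}{X}$.

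Granting this reduction, the rest is short. Assume $\side{Y}\cap\hem{X}{X}=\varnothing$. The foot $p_4:=\side{Y}\cap\Ax XY$ lies on $\side{Y}$, hence outside the closed half-space $\hem{X}{X}$; in particular $p_4\notin\hyp{X}{X}$, so $\Ax XY\not\subset\hyp{X}{X}$ and $\Ax XY$ crosses $\hyp{X}{X}$ transversally at $p_3$. Since $\geom{\hex(X,Y)}{XY}>0$ by the half-length property, the feet occur along $\Ax XY$ in the order $(XY)^-,p_4,p_3,(XY)^+$, so $p_3$ separates $p_4$ from $(XY)^+$; as $p_4$ lies strictly in the complement of $\hem{X}{X}$, this forces $(XY)^+\in\ohem{X}{X}$, hence $L(X,Y)$ is narrow. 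For $R(X,Y)=(XY,Y)$ I would run the mirror argument, with $\side{Y}$ now playing the role of the $L$-side of $\hex(XY,Y)$; because of the asymmetry of the ordering $\side{Y}<\side{L}<\side{X}$, the reduction now reads: $R(X,Y)$ is narrow iff $(XY)^-\in\ohem{Y}{Y}$, and then the hypothesis $\side{X}\cap\hem{Y}{Y}=\varnothing$ forces the foot $\side{X}\cap\Ax XY$ out of $\hem{Y}{Y}$ while $\hyp{Y}{Y}$ crosses $\Ax XY$ at $\side{Y}\cap\Ax XY$, so the same separation argument yields $(XY)^-\in\ohem{Y}{Y}$.

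The step I expect to be the main obstacle is the reduction in the first paragraph: matching the orientation data of the successor hexagon with that of $\hex(X,Y)$, pinning down precisely which half-space bounded by $\hyp{X}{X}$ (resp.\ $\hyp{Y}{Y}$) is $\hem{X}{X}$ (resp.\ $\hem{Y}{Y}$), and verifying that the positive orientation of $\hex(X,Y)$ --- in particular $\geom{\hex(X,Y)}{XY}>0$, which is what fixes the order of the feet along $\Ax XY$ --- really is guaranteed by the half-length property (immediate for $\Ax X$ and $\Ax Y$ from the closeness estimate, and for $\Ax XY$ either from that estimate or from the fact that the hexagon is a closed cycle of coherently oriented sides). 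Everything downstream of the reduction is an elementary ``which side of a hyperplane'' computation along a single geodesic.
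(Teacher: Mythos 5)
Your argument is correct and follows essentially the same route as the paper: identify the $L$-side of the successor hexagon with $\side{X}$ (resp.\ $\side{Y}$), so that narrowness of $L(X,Y)$ (resp.\ $R(X,Y)$) becomes a statement about which side of $\hyp{X}{X}$ (resp.\ $\hyp{Y}{Y}$) an ideal endpoint of $\Ax XY$ lies on, then use the hypothesis together with the fact that $\Ax XY$ runs from its foot on $\side{Y}$ to its foot on $\side{X}$ and crosses the hyperplane only once. The orientation bookkeeping you flag as the main obstacle is precisely the step the paper also treats briskly (``$\Ax XY$ goes from $\side{Y}$ to $\side{X}$'' and $\ohem{L'}{X'}=\H-\hem{X}{X}$), so your write-up is at least as careful as the paper's own proof.
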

\begin{proof}
   Suppose $\side{V}\cap\hem{U}{U}=\varnothing$. Write $(U',V') = L(U,V) = (U,UV)$. We know that $\Ax UV$ goes from $\side{V}$ to $\side{U}\subset\hyp{U}{U}$, which implies that $(UV)^+ \in \hem{U}{U}$ by assumption. Now, $\side{L'} = \side{U}$, and $U' = U$, so $\ohem{L'}{U'} = \H-\hem{U}{U}$. Thus, $V'^-\in\allowbreak\ohem{L}{U'}$ \ie $(U',V')$ is narrow. The proof is the same for the second assertion.
\end{proof}

As a corollary, we obtain

\begin{corollary}\label{cor:narrowIf}
Let $(U,V)$ be a Farey edge. If $\hem{U}{U}\cap\hem{V}{V}=\varnothing$, then both $L(U,V)$ and $R(U,V)$ are narrow.
\end{corollary}
In addition, we have
\begin{proposition}\label{prop:heredityThreshold}
For all Farey edge $(U,V)$ with $\Lv (U,V)\geq N$, if $(U,V)$ is narrow, then $\hem{U}{U}\cap\hem{V}{V}=\varnothing$.
\end{proposition}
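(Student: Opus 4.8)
The plan is to argue by contradiction, exploiting the half-length property together with Lemma \ref{lm:hypNotInt} to convert the geometric hypothesis ``$(X,Y)$ is narrow with large level'' into a quantitative statement about the configuration of the hyperplanes $\hyp{X}{X}$ and $\hyp{Y}{Y}$. First I would unpack the meaning of narrowness: by the remark following Definition \ref{def:narrow}, $(X,Y)$ narrow means $\angle_{\side{L}}(\Ax X,\Ax Y)<\frac{\pi}{2}$, equivalently $X^+\in\ohem{L}{Y}$ (and symmetrically $Y^-\in\ohem{L}{X}$). Since $(X,Y)$ is a Farey edge, $X$ and $Y$ are Christoffel words, and the level being at least $N$ means that the primitive elements $X$, $Y$, $XY$ all have translation length at least $\lambda$ (indeed the translation length of a Christoffel word grows with its level, using primitive displacement-type estimates; more precisely we have arranged $\lambda\geq 6R(\frac16\PrimSys(\rho))$ so that Lemma \ref{lm:hypNotInt} applies to Farey edges of level $\geq N$). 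The half-length property then says that the side $\side{X}$ of the hexagon $\hex(X,Y)$ lies at distance essentially $\frac12\trans{X}$ from $\side{L}$ along $\Ax X$, and similarly for $\side{Y}$; in particular the distance from $\side{L}$ to $\hyp{X}{X}$ along $\Ax X$ is at least, say, $\frac13\trans{X}-\epsilon\geq\frac13\lambda-\epsilon$, which is large.

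Next I would set up the application of Lemma \ref{lm:hypNotInt} with $\P=\side{L}$'s hyperplane (or rather a hyperplane containing $\side{L}$), $\Delta=\side{L}$, and $\Q=\hyp{X}{X}$: narrowness provides the angle condition $\angle_{\side{L}}(\Ax X,\Ax Y)<\frac{\pi}{2}$, which I would like to translate into $\angle_{\side{L}}(\Perp(\side{L},\hyp{X}{X}),\Perp(\side{L},(\text{hyperplane through }\side{L}\text{ perp. to }\Ax Y)))>\frac{\pi}{2}$ — the point being that $\Perp(\side{L},\hyp{X}{X})$ runs along $\Ax X$ while the perpendicular to the relevant $\P'$ runs the ``other way.'' Then part \eqref{item:lm2} of Lemma \ref{lm:hypNotInt} would give disjointness of $\hyp{X}{X}$ from a hyperplane built from $\side{L}$ and $\Ax Y$, and a symmetric application with the roles of $X$ and $Y$ swapped (using $Y^-\in\ohem{L}{X}$) gives the companion statement. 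Combining the two, plus the fact that $\hyp{Y}{Y}$ lies on the far side of its own separating hyperplane, should force $\hem{X}{X}\cap\hem{Y}{Y}=\varnothing$: the two half-spaces are pushed to opposite ``ends'' of the hexagon by $\side{L}$, which sits strictly between them with a definite gap $\geq\frac13\lambda-\epsilon$ on each side.

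The main obstacle I anticipate is bookkeeping the orientations and the precise identification of which auxiliary hyperplane plays the role of $\P'$ in Lemma \ref{lm:hypNotInt}, since that lemma is stated for an abstract triple $(\P,\P',\Q)$ with a line $\Delta\subset\P$ and a distance hypothesis $d(\P,\P')=d(\Delta,\P')\geq\mu$; here the natural candidate for $\mu$ is $\frac16\PrimSys(\rho)$ and the natural $\P'$ is the hyperplane through $\side{L}$ perpendicular to the ``short'' axis, so I must check that $\side{L}$ really does realize the distance between $\P$ and $\P'$ (this is where the right-angled-hexagon structure, i.e. that $\side{L}\perp\Ax X$ and $\side{L}\perp\Ax Y$, is essential) and that the distance from $\side{L}$ to $\hyp{X}{X}$ exceeds $R(\frac16\PrimSys(\rho))$, which is exactly what the level bound $\Lv(X,Y)\geq N$ and the choice $\lambda\geq 6R(\frac16\PrimSys(\rho))$ secure. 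A secondary subtlety is the possibly degenerate case (axes intersecting in $\H$, $d\geq 3$), but there the hexagon $\hex(X,Y)$ is still well defined by the convention in section \ref{par:HLPdef} and the half-length inequality still holds, so the same argument goes through with the degenerate hexagon; I would mention this at the end rather than treat it separately.
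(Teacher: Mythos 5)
Your overall strategy (narrowness supplies the obtuse-angle hypothesis, the half-length property supplies the distance hypotheses, and Lemma \ref{lm:hypNotInt}(\ref{item:lm2}) turns these into disjointness) is the right one and is the paper's, but the way you instantiate the lemma does not work. You take $\Delta=\side{L}$, $\Q=\hyp{X}{X}$, and for $\P'$ ``the hyperplane through $\side{L}$ perpendicular to $\Ax Y$'', i.e.\ $\hyp{L}{Y}$. But then $\P'$ contains $\Delta$, so $d(\Delta,\P')=0$, the hypothesis $d(\P,\P')=d(\Delta,\P')\geq\mu$ can never be met, and $\Perp(\Delta,\P')$ is not even defined; the very check you postpone (``that $\side{L}$ realizes the distance between $\P$ and $\P'$'') fails for your candidate. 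The correct triple — and this is essentially the whole proof in the paper — is $(\Delta,\P,\P',\Q)=(\side{L},\hyp{L}{X},\hyp{X}{X},\hyp{Y}{Y})$: here $\P$ and $\P'$ are the two hyperplanes perpendicular to $\Ax X$ through $\side{L}$ and $\side{X}$, so $d(\P,\P')=d(\side{L},\hyp{X}{X})=\geom{\hex(X,Y)}{X}\geq\tfrac13\trans{X}+\epsilon$ is bounded below using only the primitive systole; narrowness gives $\angle_{\side{L}}(\Perp(\side{L},\hyp{Y}{Y}),\Perp(\side{L},\hyp{X}{X}))>\tfrac\pi2$; and case (\ref{item:lm2}) then yields directly $\hyp{X}{X}\cap\hyp{Y}{Y}=\varnothing$, which together with narrowness gives $\hem{X}{X}\cap\hem{Y}{Y}=\varnothing$. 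One application suffices; no detour through disjointness from $\hyp{L}{Y}$, and no symmetric second application, is needed.

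There is a second, independent gap: your claim that $\Lv(X,Y)\geq N$ forces $X$, $Y$ and $XY$ all to have translation length at least $\lambda$ is false. The level of a basis only bounds from below the level of the member realizing it; for an edge such as $(a,w)$ with $w$ of high level, $\trans{A}$ can be as small as $\PrimSys(\rho)$. Your plan genuinely uses the false statement: the symmetric pair of applications requires both $d(\side{L},\hyp{X}{X})$ and $d(\side{L},\hyp{Y}{Y})$ to exceed $R(\mu)$, i.e.\ both translation lengths to be $\lambda$-large, and your concluding picture of ``a definite gap $\geq\tfrac13\lambda-\epsilon$ on each side'' of $\side{L}$ is wrong in general. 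This asymmetry is precisely why the paper first reduces, without loss of generality, to $\Lv Y=\Lv(X,Y)$ and then places the high-level member as the far hyperplane $\Q=\hyp{Y}{Y}$ (where the $R(\mu)$-bound needs $\trans{Y}\geq\lambda$), while the pair $(\P,\P')$ lies along $\Ax X$, where only the systole bound $\mu$ is available — and is all that the lemma requires.
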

\begin{proof}
   Suppose that $\Lv V = \Lv (U,V)$, the other case being symmetrical. By the half-length property, $d(H_L,\hyp{U}{U}) = d(\hyp{L}{U},\hyp{U}{U}) = \geom{\hex(U,V)}{U}\geq\frac13\trans{U} + \epsilon\geq \frac16\PrimSys([\rho])$ and $d(\hyp{V}{V},H_L) = \geom{\hex(U,V)}{V}\geq\frac13\trans{V} + \epsilon \geq R(\frac16\PrimSys([\rho]))$ since $\Lv V\geq N$. Since moreover $(U,V)$ is narrow, $\angle_{H_L}(\Perp(H_L,\hyp{U}{U}),\Perp(H_L,\hyp{V}{V})) = \pi - \angle_{H_L}(\Ax U,\Ax V) > \pi/2$, so the conclusion follows from Lemma \ref{lm:hypNotInt}.\eqref{item:lm2} applied with $(\Delta,\P,\P',\Q) = (\side{L},\hyp{L}{U},\hyp{U}{U},\hyp{V}{V})$.
\end{proof}
As a result, we get
\begin{corollary}\label{cor:heredityNarrow}
If a Farey edge $(U,V)$ of level greater than $N$ is narrow, then all its successors in the Farey triangulation are narrow.
\end{corollary}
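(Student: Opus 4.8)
The plan is to obtain the statement by iterating, in lockstep, Proposition~\ref{prop:heredityThreshold} and Corollary~\ref{cor:narrowIf}. These two results chain together perfectly: Proposition~\ref{prop:heredityThreshold} turns ``narrow plus level $\geq N$'' into the disjointness $\hem{X}{X}\cap\hem{Y}{Y}=\varnothing$, and Corollary~\ref{cor:narrowIf} turns that disjointness into narrowness of the two immediate successors $L(X,Y)$ and $R(X,Y)$. So the whole content of the corollary is that this loop can be run indefinitely, and the only hypothesis one must check persists along the way is the level bound.

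First I would record the combinatorial fact that every successor of $(X,Y)$ in the Farey triangulation is $w(X,Y)$ for a nonempty positive word $w\in\gen{L,R}$, and that each application of $L$ or $R$ increases the $e$-level by one; equivalently, moving one step further from the edge $\{\infty,0\}$ lengthens the minimal gallery. In particular, for every successor $(X',Y')$ of $(X,Y)$ one has $\Lv(X',Y')>\Lv(X,Y)>N$, so a fortiori $\Lv(X',Y')\geq N$.

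Then I would run the induction on the length $k\geq 1$ of the positive word in $\gen{L,R}$ that defines the successor under consideration. The base case $k=1$ is exactly the input we are given: $(X,Y)$ is narrow and $\Lv(X,Y)\geq N$ (which holds since $\Lv(X,Y)>N$), so Proposition~\ref{prop:heredityThreshold} gives $\hem{X}{X}\cap\hem{Y}{Y}=\varnothing$, and Corollary~\ref{cor:narrowIf} then gives that $L(X,Y)$ and $R(X,Y)$ are narrow. For the step, a successor defined by a word of length $k+1$ is an immediate successor of some length-$k$ successor $(X',Y')$; by the induction hypothesis $(X',Y')$ is narrow, by the previous paragraph $\Lv(X',Y')\geq N$, so Proposition~\ref{prop:heredityThreshold} applied to $(X',Y')$ followed by Corollary~\ref{cor:narrowIf} shows that both immediate successors of $(X',Y')$ are narrow. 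This closes the induction and proves that every successor of $(X,Y)$ is narrow.

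There is no genuine difficulty here: all the geometry lives in Proposition~\ref{prop:heredityThreshold} and Corollary~\ref{cor:narrowIf}, and the corollary is merely the remark that they can be alternated. The one point I would be careful to spell out is the monotonicity of the $e$-level under passing to successors, since that is precisely what guarantees that the hypothesis $\Lv\geq N$ of Proposition~\ref{prop:heredityThreshold} never fails as the induction proceeds.
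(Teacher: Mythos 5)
Your proposal is correct and is exactly the argument the paper intends: the corollary is stated as an immediate consequence of Proposition~\ref{prop:heredityThreshold} and Corollary~\ref{cor:narrowIf}, alternated along the tree of successors, with the level bound preserved because the $e$-level only increases under applying positive words in $\gen{L,R}$. Your explicit induction on the word length merely spells out what the paper leaves implicit, so there is no difference in substance.
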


The rest of the proof is more technical. We will need the following lemma about right-angled hexagons that are very far from being narrow.

\begin{lemma}\label{lm:additivityForRHHfarFromNarrow}
   For all $\mu>0$, there is $K(\mu) >0$ such that for any Farey edge $(U,V)$, if
   \begin{align*}
      d(\Ax S,\Ax UV) &\leq K(\mu) & \angle_{H_S}(\Ax S,\Ax UV) &\geq \pi-K(\mu)
   \end{align*}
   for some $S\in\{U,V\}$, then
   \begin{equation*}
      \abs{\geom{\hex(U,V)}{UV} + \geom{\hex(U,V)}{S}-\geom{\hex(U,V)}{T}}\leq \mu
   \end{equation*}
   where $T$ satisfies $\{S,T\} = \{U,V\}$.
\end{lemma}
\begin{proof}
   We will do the proof in the case where $S = U$, the other one being similar. We place ourselves in a configuration where $\Ax U$ and $H_U$ are diameters of $B^d$ (see Figure \ref{fig:finitenessNonAcute2} for all the notations and estimates below). 

   Up to choosing $K(\mu)$ small, we may suppose that $\Ax U$ and $\Ax UV$ are as close to each other as we want for the Euclidean Hausdorff distance, with opposite orientation. In particular, we can also suppose up to choosing $K(\mu)$ small enough that $\hyp{U}{U}$ and $\hyp{U}{UV}$ are as close as we need for the Euclidean Hausdorff distance, since they are respectively orthogonal to $\Ax U$ and $\Ax UV$, and both contain $H_U$. Similarly, $\Ax V = \Perp(H_V,H_L)$ can be assumed to be as closed to $\Ax UV$ as needed for the Euclidean Hausdorff distance on $B^d$, with the same orientation, up to choosing $K(\mu)$ small enough, as $H_L$ and $H_V$ are as closed to being Euclideanly perpendicular to $\Ax U$ as needed up to choosing $K(\mu)$ small enough. Thus, if we let $o = \Ax V\cap \hyp{U}{U}$, $o' = \Ax V\cap \hyp{U}{UV}$, $k = \Ax U\cap \side{U}$ and $l = \Ax UV\cap \side{U}$, we see that $d(o,o')$ and $d(k,l)$ can be assumed to be as small as needed, up to shrinking $K(\mu)$. We will assume that $d(o,o')$ and $d(k,l)$ are at most $\mu$ in what follows.
   
   Now, if we let $p=\Ax V\cap\side{V}$, $p' = \Ax UV\cap \side{V}$, $q=\Ax V\cap\side{L}$ and $q'=\Ax U\cap\side{L}$, we have:
\begin{align*}
   \geom{\hex(U,V)}{V} &= d(\side{V},\side{L})\leq d(p',q')\\
   &\leq d(p',l) + d(q',k) + d(k,l)\\
   &\leq \geom{\hex(U,V)}{UV} + \geom{\hex(U,V)}{U} + \mu\\
   \shortintertext{and}
   \geom{\hex(U,V)}{V}&= d(p,q)\geq d(p,o')+d(q,o)-d(o,o')\\
   &\geq d(\side{V},\hyp{U}{UV})+d(\side{L},\hyp{U}{U})-d(o,o')\\
   &\geq \geom{\hex(U,V)}{UV}+\geom{\hex(U,V)}{U}-\mu
\end{align*}
so that $\abs{\geom{\hex(U,V)}{UV}+\geom{\hex(U,V)}{U}-\geom{\hex(U,V)}{V}}\leq\mu$.
\end{proof}

\begin{figure}[ht]
   \centering
   \begin{subfigure}[t]{.45\textwidth}
      \hfill
      \labellist
      \small\hair 1.5pt
      \pinlabel $U$ <2pt,0pt> [l] at 113 127
      \pinlabel $UV$ <0pt,-1pt> [r] at 77 165
      \pinlabel $V$ <2pt,-5pt> [l] at 94 80

      \pinlabel* $\side{L}$ <1pt,2pt> [tr] at 43 26
      \pinlabel $\side{U}$ [t] at 143 110
      \pinlabel* $\side{V}$ <-1pt,-1pt> [tl] at 88 181

      \pinlabel $\hyp{L}{U}$ [b] at 163 43
      \pinlabel $\hyp{V}{V}$ <0pt,-2pt> [l] at 134 210
      \pinlabel $\hyp{U}{U}$ [tr] at 190 132
      \pinlabel $\hyp{U}{UV}$ <-1.5pt,-1pt> [bl] at 41 68
      \pinlabel {$\scriptstyle k$} <1pt,-1pt> [tl] at 113 114
      \pinlabel {$\scriptstyle l$} [tr] at 83 117
      \pinlabel {$\scriptstyle o'$} <1pt,4pt> [tl] at 89 107
      \pinlabel {$\scriptstyle o$} [l] at 89 112
      \pinlabel {$\scriptstyle q'$} [bl] at 113 31
      \pinlabel {$\scriptstyle q$} [bl] at 102 30
      \pinlabel {$\scriptstyle p$} [bl] at 75 196
      \pinlabel {$\scriptstyle p'$} [tr] at 73 198
      \endlabellist
      \includegraphics[scale=.70]{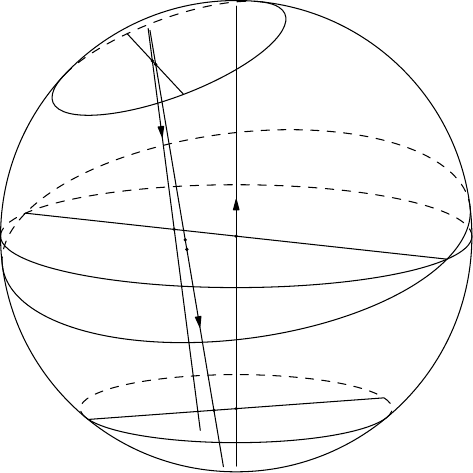}
      \caption{Both $\Ax V$ and $\Ax UV$ are close to $\Ax U$ for the Euclidean Hausdorff metric. So, $p$ and $p'$, $q$ and $q'$, as well as $o$, $o'$, $k$ and $l$ are close to each other}
      \label{fig:finitenessNonAcute2}
   \end{subfigure}
   \hspace{.03\textwidth}
   \begin{subfigure}[t]{.45\textwidth}
      \labellist
      \pinlabel $\P_{n,k}$ at 145 200
      \pinlabel $\side{L_n}$ <0pt,-2pt> [b] at 197 105
      
      \pinlabel $U$ [l] at 113 156
      \pinlabel $V_n$ [r] at 70 151
      \endlabellist
      \includegraphics[scale=.70]{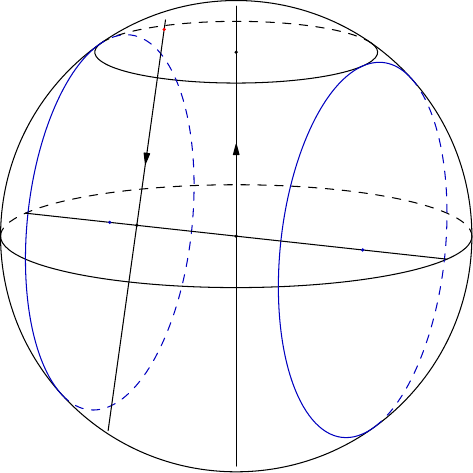}
      \hfill
      \caption{The blue circles indicate the orthogonal projection of $\P_{n,k}$ onto $H_{L_n}$. The Euclidean diameter of this projection is $2K_k$ and only depends on $k$}
      \label{fig:LnRn}
   \end{subfigure}
   \caption{}
\end{figure}

We also know that going only left or right leads us to narrow Farey edges.
\begin{proposition}\label{prop:LnRn}
For any Farey edge $(U,V)$, $L^n(U,V)$ and $R^n(U,V)$ are narrow for $n$ large enough.
\end{proposition}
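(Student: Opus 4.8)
The plan is to show that iterating $L$ (respectively $R$) makes the Farey edge narrower and narrower until the narrowness condition of Definition \ref{def:narrow} is met, and then invoke Corollary \ref{cor:heredityNarrow} to conclude. Since narrowness is equivalent to $\angle_{\side{L}}(\Ax X,\Ax Y)<\tfrac\pi2$, it suffices to control the angle at the ``left'' side of the hexagon $\hex(X,Y_n)$ where $(X,Y_n)=L^n(X,Y)$ (note $X$ is fixed along the $L$-branch). First I would record that in the $L$-branch, $Y_n = X Y_{n-1} = X^n Y$, so $\Ax Y_n$ converges (in the visual/endpoint topology on $\partial\H$) to the pair $\{X^+,X^+\}$ unless the exceptional alignment $Y^+ = X^-$ occurs — and by irreducibility (which holds by Proposition \ref{prop:BQisIrr}, since $\rho$ satisfies the $Q_\lambda$-conditions and the half-length property) this degenerate configuration is essentially controlled, as in Lemma \ref{lm:transLength}. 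So the translation lengths $\trans{Y_n}\to\infty$ by Lemma \ref{lm:transLength}, and $\Ax Y_n$ limits onto $\Ax X$.

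The key geometric step is the observation illustrated in Figure \ref{fig:LnRn}: the side $\side{L_n}=\Perp(\Ax X,\Ax Y_n)$ of the hexagon $\hex(X,Y_n)$ has its foot on $\Ax X$ drifting towards $X^+$, and the hyperplane $\hyp{L_n}{X}$ orthogonal to $\Ax X$ through $\side{L_n}$ has Euclidean diameter (after projecting to $\side{L_n}$) shrinking — the diameter of the orthogonal projection of $\hyp{L_n}{X}$ onto $\side{L_n}$ is bounded by a constant $K_k$ depending only on how far along we have travelled. Concretely: as $n$ grows, $\Ax Y_n$ gets uniformly close to $\Ax X$ on larger and larger compact pieces, which forces the angle $\angle_{\side{L_n}}(\Ax X,\Ax Y_n)$ to tend to $0$ (or more precisely to become acute), because $\Ax X$ and $\Ax Y_n$ are nearly parallel geodesics that are both nearly perpendicular to $\side{L_n}$ with the correct co-orientation coming from the half-length property's control of the signed geometric translation lengths. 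Hence for $n$ large enough $(X,Y_n)$ is narrow. The case of $R^n(X,Y)$ is identical with the roles of the two endpoints and $X\leftrightarrow Y$ exchanged.

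Once $(X,Y_n)$ is narrow for some $n\geq N$ (and $n$ can be taken $\geq N$ since we only need $n$ large), Corollary \ref{cor:heredityNarrow} gives that all successors of $(X,Y_n)$ in the Farey triangulation are narrow; in particular $L^m(X,Y)$ is narrow for all $m\geq n$, which is exactly the claim. If the initial edge has level less than $N$, one just takes $n$ large enough that $\Lv L^n(X,Y)\geq N$ as well, which is automatic since the level grows without bound along a monotone branch.

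The main obstacle I expect is making the convergence $\Ax Y_n\to\Ax X$ quantitative enough to pin down the sign of the angle (i.e.\ to get $<\tfrac\pi2$ and not merely ``close to $0$ or $\pi$''). This is where the half-length property is essential: it guarantees the foot of $\side{L_n}$ lies on the correct side and that the signed geometric lengths $\geom{\hex(X,Y_n)}{X}$ and $\geom{\hex(X,Y_n)}{Y_n}$ have the expected sign, so the hexagon is genuinely converging to a ``thin'' configuration rather than flipping orientation. One should also handle the degenerate sub-case $Y^+=X^-$ separately — there $\trans{Y_n}\to 0$ so $Y_n$ eventually fails to be loxodromic, contradicting the $Q_\lambda$-conditions, so this case simply does not arise. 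Apart from that, the remaining work is the elementary Euclidean estimate on the shrinking diameter of the projected hyperplane, analogous to the computation already carried out in the proof of Lemma \ref{lm:hypNotInt}.
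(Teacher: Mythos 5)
The decisive step is missing. Your whole argument funnels into the claim that, because $\Ax Y_n$ approaches $\Ax X$, the angle $\angle_{\side{L_n}}(\Ax X,\Ax Y_n)$ must eventually be acute, and you justify the choice of side (``converging to a thin configuration rather than flipping orientation'') by appealing to the half-length property's control of the signed geometric lengths. That control does not do this job: it only forces $\geom{\hex_n}{X}$, $\geom{\hex_n}{Y_n}$ and $\geom{\hex_n}{X_nY_n}$ to be close to half the corresponding translation lengths, i.e.\ it orders the feet of the perpendicular sides along each axis; it carries no information about which side of $\hyp{L_n}{X_n}$ the endpoint $Y_n^-$ lies on, which is exactly the acute/obtuse alternative to be settled. (If the half-length property settled it by itself, every Farey edge would automatically be narrow and Proposition \ref{prop:finitenessNonAcute} would be vacuous.) Note also that two geodesics whose common perpendicular is short can meet it at any angle, so ``nearly parallel'' does not imply ``acute''. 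The paper settles the alternative by contradiction: assuming $\hex_n$ is not narrow for all $n\geq N$ (heredity, Corollary \ref{cor:heredityNarrow}, is used exactly as you use it), it first shows $d(\Ax X,\Ax Y_n)\to 0$ and $\angle_{\side{L_n}}(\Ax X,\Ax Y_n)\to\pi$ via the $\P_{n,k}$ estimate of Figure \ref{fig:LnRn}, and then observes that in this folded configuration the hexagon relation is subtractive, $\geom{\hex_n}{Y_{n+1}}+\geom{\hex_n}{X}=\geom{\hex_n}{Y_n}+o(1)$, which with the half-length property yields $\trans{Y_{n+1}}-\trans{Y_n}\leq -6\epsilon+o(1)$; this contradicts the quantitative conclusion of Lemma \ref{lm:transLength}, namely $\trans{Y_n}=n\trans{X}+t+o(1)$, which you invoke only qualitatively (to get lengths going to infinity and to exclude the degenerate case). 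Some argument of this kind is the entire content of the proposition and is absent from your proposal.

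Two secondary inaccuracies. The endpoints of $\Ax Y_n=\Ax X^nY$ converge to $X^+$ and to $Y^{-1}X^-$, not to the pair $\{X^+,X^+\}$, so $\Ax Y_n$ does not limit onto $\Ax X$ in a fixed configuration; it limits onto a geodesic that is merely asymptotic to $\Ax X$ at $X^+$ (the paper's closeness statements are made after renormalizing so that specific lines are diameters of $B^d$, or under the contradiction hypothesis). And the degenerate case of Lemma \ref{lm:transLength} here is $YX^+=X^-$, not $Y^+=X^-$; your way of excluding it through the $Q_\lambda$-conditions is nonetheless the right one, and matches the paper. Finally, the shrinking-diameter estimate of Figure \ref{fig:LnRn} is used in the paper to show that non-narrowness of $\hex_{n+k}$ confines $Y_n^-$ near $\P_{n,k}$, i.e.\ inside the contradiction argument, not as a direct proof of acuteness.
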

\begin{proof}
We only do the proof for $L^n(U,V) = (U,U^nV)=:(U_n,V_n)$, the other case being similar. We will denote by $\hex_n$ the corresponding right-angled hexagons in this proof.

By Lemma \ref{lm:transLength}, we must have $\trans{V_n} = n\trans{U} + t + o(1)$ for some $t\in\R$ since $\trans{V_n}\to 0$ would violate the $Q$-conditions. Hence, the half-length property implies that both $\geom{\hex_n}{V_n}$ and $\geom{\hex_n}{V_{n+1}}$ diverge to $+\infty$.

Observe that in a configuration where $\side{V_n}$ and $\Ax V_n$ are diameters of $B^d$, the last observation and the fact that $\geom{\hex_n}{U}$ is bounded above (by the half-length property) shows that $d(\Ax V_n,\Ax V_{n+1}) \to 0$. 
Indeed, if there was $\epsilon >0$ such that $d(\Ax V_n,\Ax V_{n+1}) \geq\epsilon$ for infinitely many $n$, then we would find a uniform $\epsilon'>0$ and a strictly increasing map $\theta:\N\to\N$ such that $V_{\theta(n)}^+$ and $V_{\theta(n)+1}^+$ are at Euclidean distance at least $\epsilon'$ for all $n$. But since both $\geom{\hex_n}{V_n}$ and $\geom{\hex_n}{V_{n+1}}$ diverge to $+\infty$ and we are in a configuration where $H_{V_n}$ is a diameter of $B^d$, we must have that $\hyp{L_{\theta(n)}}{V_{\theta(n)}}$ and $\hyp{U_{\theta(n)}}{V_{\theta(n)+1}}$ get arbitrarily close to $V_{\theta(n)}^+$ and $V_{\theta(n)+1}^+$ for the Euclidean Hausdorff distance. In particular
\begin{equation*}
   \geom{\hex_{\theta(n)}}{U_{\theta(n)}} = d(H_{L_{\theta(n)}},H_{U_{\theta(n)}})\geq d\!\left(\hyp{L_{\theta(n)}}{V_{\theta(n)}},\hyp{U_{\theta(n)}}{V_{\theta(n)+1}}\right) \to +\infty
\end{equation*}
However, by the half-length property, we have $\geom{\hex_n}{U_n}\leq \frac23\trans{U}-\epsilon$ for all $n$, which is a contradiction.

\medskip

In view of Corollary \ref{cor:heredityNarrow}, we can assume that for all $n\geq N$, $\hex_n$ is not narrow, meaning that $\angle_{\side{L_n}}(\Ax U,\Ax V_n) \geq \pi/2$. Denote by $u_n = d(\Ax U,\Ax V_n)$, $a_n = \geom{\hor_n}{U}$, and $a=\frac13\trans{U}$ (by the half-length property, $a_n\geq a +\epsilon$). Note that $u_n\to 0$ and $\angle_{\side{L_n}}(\Ax U,\Ax V_n)\to\pi$.

Indeed, for any fixed $k$, in a configuration where $\side{L_n}$ and $\Ax U$ are diameters of $B^d$, the half-length property gives that the lines $\side{L_n},\ldots,\side{L_{n+k}}$ are aligned in this order along $\Ax U$, and that for all $i\in\{0,\ldots,k-1\}$, $d(\side{L_{n+i}},\side{L_{n+i+1}}) = a_{n+i}\geq a +\epsilon$. Therefore, the distance from $\side{L_n}$ to $\side{L_{n+k}}$ is at least $k(a + \epsilon)$.

   Now, consider $\P_{n,k}$ the hyperplane orthogonal to $\Ax U$ at distance $ka$ from $\side{L_n}$, and $2K_k$ the Euclidean diameter of its orthogonal projection onto $\side{L_n}$ in a configuration where $\side{L_n}$ and $\Ax U$ are diameters of $B^d$ (see Figure \ref{fig:LnRn}). Note that $K_k\to_{k\to\infty} 0$ as $ka\to\infty$. The last paragraph shows that $\P_{n,k}$ separates $\side{L_n}$ from $\side{L_{n+k}}$.

   Observe that because $d(\Ax V_n,\Ax V_{n+1})\to 0$ and $\trans{V_n} = n\trans{U} + t + o(1)$, if we are in a configuration where $\Ax U$ and $\side{L_n}$ are diameters of the ball, then the two points $\side{V_n}\cap\Ax V_n$ and $\side{V_n}\cap\Ax V_{n+1}$ are arbitrarily close to each other and to the sphere at infinity (for the Euclidean distance). Applying this $k$ times yields that, up to taking $n$ large enough, $V_n^-$ and $V_{n+k}^-$ are as close to each other for the Euclidean distance in $B^d$ as needed.

   But since $\hex_{n+k}$ is not narrow, $V_{n+k}^-\not\in \ohem{L_{n+k}}{U_{n+k}}$. Hence $V_{n+k}^-\in\H-\ohem{L_{n+k}}{U_{n+k}}$, and since $d(\P_{n,k},\hyp{L_{n+k}}{U_{n+k}}) = k\epsilon >0$, $V_n^-$ lies above $\P_{n,k}$ up to choosing $n$ large enough. Therefore, $\Ax V_n\cap \side{L_n}$ is the orthogonal projection of a point of $\P_{n,k}$ onto $\side{L_n}$, so $|u_n|\leq K_k$ if $n$ is large enough. Since this holds for any $k$ if $n$ is large enough, $u_n\to 0$. It also follows that $\angle_{\side{L_n}}(\Ax U,\Ax V_n)$ goes to $\pi$ since the projective model is conformal at the origin.

   \medskip

   By Lemma \ref{lm:additivityForRHHfarFromNarrow} applied to the right-angled hexagon $\hex_n$ with $S = U$, we have
\begin{equation*}
\geom{\hex_n}{V_{n+1}} + \geom{\hex_n}{U} = \geom{\hex_n}{V_n} + o(1)
\end{equation*}
so that, by the half-length property
\begin{equation*}
\frac12\trans{V_{n+1}} - \frac12\trans{V_n} \leq -\frac12\trans{U} + 3\left(\frac16\trans{U} - \epsilon\right) + o(1)
\end{equation*}
Finally, taking the limit on both sides yields
\begin{equation*}
\frac12\trans{U} \leq -3\epsilon
\end{equation*}
which is a contradiction.
\end{proof}

\begin{remark}
   In the case where $\rho$ is Coxeter extensible, we can cut short the proof by only using the non-generic case of Lemma \ref{lm:transLength}. We use the notation introduced in paragraph \ref{par:CoxExtReps}.

   Observe that $F_{V_n}=F_V$ for all $n$. Indeed, $U_nV_n=V_{n+1}$ implies that $I_{U_n}I_{V_n}=I_{L_{n+1}}I_{V_{n+1}}$. Moreover, $I_{U_n}=I_{L_{n+1}}$ since they have the same fixed point set $\Perp(\Ax U_n,\Ax U_nV_n) = \Perp(\Ax U_{n+1},\Ax V_{n+1})$, so $I_{V_n} = I_{V_{n+1}}=I_V$ by induction, and we conclude by considering their fixed point sets. Now, if we suppose that $U^+\in\overline{F_V}$, then $I_VU^+ = U^+$, so $VU^+ = I_LI_VU^+ = I_LU^+ = U^-$, and hence $\trans{U^nV}\to 0$ by Lemma \ref{lm:transLength}, contradicting the $Q$-conditions. Therefore, $U^+\not\in\overline{F_V}$, so if $n$ is large enough, $F_V\subset\H-\hem{U_n}{L_n}$, hence $V_n^+\not\in\hem{U_n}{L_n}$ \ie $L^n(U,V)$ is narrow.
\end{remark}

We can now prove the analogue of the trigonometric lemma used by Lee and Xu.

\begin{figure}[ht]
      \labellist
      \pinlabel $U_n$ [r] at 113 155
      \pinlabel $V_n$ [l] at 147 144
      \pinlabel $\side{L_n}$ [t] at 59 120
      \pinlabel $\hyp{U_n}{U_n}$ [t] at 70 207
      \pinlabel $\hyp{V_n}{V_n}$ [tl] <-1.5pt,1.5pt> at 164 173
      \endlabellist
      \centering
      \includegraphics[scale=.70]{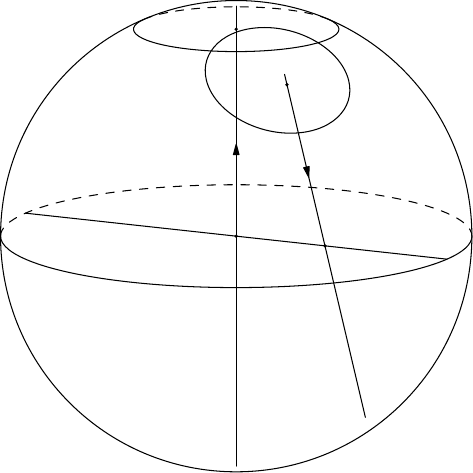}
      \caption{Since $\geom{\hex_n}{U_n}$ and $\geom{\hex_n}{V_n}$ are large, the Euclidean diameters of $\hyp{U_n}{U_n}$ and $\hyp{V_n}{V_n}$ are small. Thus, $\Ax V_n$ is close to $\Ax U_n$ for the Euclidean Hausdorff metric in this configuration}
      \label{fig:finitenessNonAcute1}
\end{figure}

\begin{proposition}\label{prop:finitenessNonAcute}
If $\lambda$ is large enough, all but finitely many Farey edges are narrow.
\end{proposition}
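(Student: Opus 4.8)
The plan is to argue by contradiction, following Lee and Xu, but replacing their use of right‑angled hexagon trigonometry by a large‑scale estimate valid in every dimension. Fix $\lambda$ large (it will be constrained finitely many times). First I would choose $N$ large enough that Corollary \ref{cor:heredityNarrow} and Proposition \ref{prop:heredityThreshold} apply at level $\geq N$ and, in addition, that every Farey edge $(X,Y)$ of level $\geq N+1$ has $\trans{X},\trans{Y},\trans{XY}>\lambda$: this is possible since a Christoffel word of level $\ell$ has cyclically reduced length of order at least $\ell$, while the $Q_\lambda$‑conditions leave only finitely many primitive conjugacy classes — hence only ones of bounded level — with translation length $\leq\lambda$. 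Assume for contradiction that infinitely many Farey edges are non‑narrow, so there are non‑narrow edges of arbitrarily large level $\ell$. Given one, repeatedly passing to its parent in the Christoffel construction and using the contrapositive of Corollary \ref{cor:heredityNarrow} produces a chain $(X_0,Y_0),\dots,(X_{\ell-N-1},Y_{\ell-N-1})$ of non‑narrow Farey edges, each obtained from the previous one by an $L$‑ or $R$‑move, with $\Lv(X_j,Y_j)=N+1+j$; in particular all of $\trans{X_j},\trans{Y_j},\trans{X_jY_j}$ exceed $\lambda$ throughout, so by the half‑length property $\geom{\hex_j}{A}\geq\tfrac13\lambda$ for $A\in\{X_j,Y_j,X_jY_j\}$, where $\hex_j=\hex(X_j,Y_j)$. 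It then suffices to bound $\ell$.

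The crux — and the substitute for Lee and Xu's trigonometric lemma — is the identity, valid for every non‑narrow Farey edge $(X,Y)$ of level $\geq N+1$,
\begin{equation*}
   \geom{\hex(X,Y)}{XY}+\geom{\hex(X,Y)}{X}=\geom{\hex(X,Y)}{Y}+o_\lambda(1),
\end{equation*}
after possibly exchanging $X$ and $Y$ and choosing the signs according to the configuration; here $o_\lambda(1)$ denotes a quantity bounded by a function of $\lambda$ alone that tends to $0$ as $\lambda\to\infty$. (This is precisely the relation invoked in the proof of Proposition \ref{prop:LnRn}.) To prove it I would pass to the projective model, normalising so that $\side{L}$ and $\Ax X$ are diameters of $B^d$. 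Since $\geom{\hex(X,Y)}{X}$ and $\geom{\hex(X,Y)}{Y}$ are at least $\tfrac13\lambda$, the Euclidean‑diameter computation carried out in the proof of Lemma \ref{lm:hypNotInt} shows that the hyperplanes $\hyp{X}{X}$, $\hyp{Y}{Y}$ and the two hyperplanes orthogonal to $\Ax XY$ through $\side{X}$ and $\side{Y}$ all have Euclidean diameter $O(e^{-\lambda/3})$; combined with non‑narrowness, $\angle_{\side{L}}(\Ax X,\Ax Y)\geq\tfrac{\pi}{2}$, this pins $\Ax Y$ and $\Ax XY$ within Euclidean Hausdorff distance $o_\lambda(1)$ of $\Ax X$ (Figure \ref{fig:finitenessNonAcute}). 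Reading off the feet of the common perpendiculars that define the relevant geometric translation lengths — the points $x,x',y,y',o,o',a,b$ of Figure \ref{fig:finitenessNonAcute}, which are then $o_\lambda(1)$‑close in the appropriate groups — and translating Euclidean closeness near $\partial\H$ back into hyperbolic distances yields the identity. I expect this to be the main obstacle: for $d\leq 4$ one could instead quote the hexagon trigonometric formulae, but in general one must control the degenerating configuration by hand, and keeping careful track of the signs in the definition of the signed geometric translation length is the delicate point.

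Feeding the identity into the half‑length property — using $\geom{\hex_j}{A}=\tfrac12\trans{A}\pm\max\{\tfrac16\transhex{\hex_j}-\epsilon,0\}$ together with $\transhex{\hex_j}=\min\{\trans{X_j},\trans{Y_j}\}$ — gives, for each $j$,
\begin{equation*}
   \trans{X_jY_j}+\trans{X_j}\leq\trans{Y_j}+\max\{\transhex{\hex_j}-6\epsilon,0\}+o_\lambda(1)
\end{equation*}
and the reverse inequality with the two sides exchanged. As at the end of the proof of Proposition \ref{prop:LnRn}, one iterates this along the chain: on each maximal $L$‑ or $R$‑run, Lemma \ref{lm:transLength} pins down the exact asymptotics of the growing translation length, which, plugged into the iterated inequality, forces in the limit $\tfrac12\transhex{\hex_j}\leq-3\epsilon$ as soon as $\lambda$ is large enough that the cumulative $o_\lambda(1)$‑errors stay below $3\epsilon$ — contradicting $\epsilon>0$; equivalently, the per‑step inequality shows that $\trans{X_j}+\trans{Y_j}$ must decrease by a definite amount at each step, so $\ell-N-1$ is bounded in terms of $\epsilon$ and $\max\{\trans{X}+\trans{Y}:\Lv(X,Y)=N+1\}$. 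Either way $\ell$ is bounded, so only finitely many Farey edges are non‑narrow. Gathering the finitely many lower bounds imposed on $\lambda$ here and in the rest of Section \ref{sec:proofOfConverse} into a non‑increasing $\lambda_d\circ\PrimSys$ (the constraints only relax as $\PrimSys(\rho)$ grows) completes the argument.
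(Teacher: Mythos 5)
Your overall architecture (contradiction, per-step additive identity for the hexagon side-lengths proved by a large-scale degeneration argument in the projective model, then a definite decrement of translation lengths forcing a bound) is the same as the paper's, but there is a genuine gap at the step where you claim that every Farey edge of level $\geq N+1$ has $\trans{X},\trans{Y},\trans{XY}>\lambda$. The level of an \emph{edge} does not control the levels of both of its constituent Christoffel words: for instance $L^{m}(a,b)=(a,a^{m}b)$ has level $m$ for arbitrarily large $m$, yet contains $a$, of level $0$, and the $Q_\lambda$-conditions only guarantee that \emph{finitely many classes} lie below $\lambda$ — they do not prevent $\trans{A}\leq\lambda$ (indeed $\lambda$ is chosen large compared to $\PrimSys(\rho)$). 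Consequently, along a long $L$- or $R$-run inside your chain of non-narrow edges, one coordinate is a fixed low-level primitive whose translation length may be small, so the hypothesis $\geom{\hex_j}{X_j},\geom{\hex_j}{Y_j}\geq\tfrac13\lambda$ that underlies your key identity fails there, the per-step decrement is unavailable, and the bound on the chain length $\ell$ does not follow.

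This is precisely the point the paper's proof is organized around. It does not bound finite chains; it extracts (König-style) an \emph{infinite} descending path of edges with $\hem{X_n}{X_n}\cap\hem{Y_n}{Y_n}\neq\varnothing$, and then invokes Proposition \ref{prop:LnRn} — pure $L$- or $R$-iterates eventually become narrow, and narrowness is hereditary above level $N$ — to conclude the path must turn infinitely often, hence $\Lv X_n\to\infty$ \emph{and} $\Lv Y_n\to\infty$ individually. Only then do the $Q_\lambda$-conditions give that \emph{both} $\trans{X_n}$ and $\trans{Y_n}$ are eventually large, which legitimizes the degenerating-hexagon estimate and also guarantees that both sequences $(\trans{X_n})$ and $(\trans{Y_n})$ decrease along the path, producing the contradiction. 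Your finite-chain variant could be repaired, but not for free: you would need either to pass to the infinite-path formulation so that Proposition \ref{prop:LnRn} applies as stated, or a uniform version of that proposition bounding the length of non-narrow $L$- or $R$-runs, neither of which your proposal supplies (you cite Lemma \ref{lm:transLength} only in the final bookkeeping, and the concluding sentence conflating the limit $\tfrac12\transhex{\hex_j}\leq-3\epsilon$ with the chain-length bound reflects this missing ingredient).
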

\begin{proof}
   Suppose there are infinitely many Farey edges $(U,V)$ such that $\hem{U}{U}\cap\hem{V}{V}\neq\varnothing$ ($\ast$). Since there are only finitely many Farey edges whose level is smaller than $N$ (and with Corollary \ref{cor:heredityNarrow} and Proposition \ref{prop:heredityThreshold}), we can find a Farey edge $(U,V)$ of level $N$ that satisfies ($\ast$), and that has infinitely many successors satisfying ($\ast$). We now define a sequence $(U_n,V_n)$ by setting $(U_0,V_0) = (U,V)$, and $(U_{n+1},V_{n+1}) = L(U_n,V_n)$ if $V^+(L(U_n,V_n))$ contains infinitely many Farey edges satisfying ($\ast$), and $(U_{n+1},V_{n+1}) = R(U_n,V_n)$ else.
   It is straightforward to check that the sequence $(U_n,V_n)$ is well-defined for $n\in\N$. We will denote by $\hex_n = \hex(U_n,V_n)$ the associated sequence of right-angled hexagons.

   We then make the following observations:
   \begin{enumerate}[(i), ref = \roman*]
      \item For all $n\in\N$, $\hem{U_n}{U_n}\cap\hem{V_n}{V_n}\neq\varnothing$ by Proposition \ref{prop:heredityThreshold} and Corollary \ref{cor:heredityNarrow}.
         \label{item:seqSatsisfiesStar}
      \item Both $\Lv U_n\to +\infty$ and $\Lv V_n\to +\infty$, otherwise, either $(U_n,V_n) = L^{n-m}(U_m,V_m)$ for some $m$ high enough and all $n\geq m$, or $(U_n,V_n) = R^{n-m}(u_m,v_m)$ instead. In both cases, Propositions \ref{prop:heredityThreshold} and \ref{prop:LnRn} show that this contradicts point \eqref{item:seqSatsisfiesStar} above.
         \label{item:LvGoesToInfty}
   \end{enumerate}

   Notice that by \eqref{item:LvGoesToInfty}, both $\Lv U_n$ and $\Lv V_n$ will be larger than $N$ if $n$ is large enough. Hence, up to supposing that $\lambda$ is high enough, we can assume by the half-length property that $\geom{\hex_n}{U_n}$ and $\geom{\hex_n}{V_n}$ are as large as we need.

   In particular, in a configuration where $\side{L_n}$ and $\Ax A_n$ are diameters of $B^d$, \eqref{item:seqSatsisfiesStar} and the last paragraph imply that up to supposing that $\lambda$ is large enough, $d(\Ax U_n,\Ax V_n)$ and $\angle_{\side{L_n}}(\Ax U_n,\Ax V_n)$ are respectively as close to $0$ and $\pi$ as needed (see Figure~\ref{fig:finitenessNonAcute1}).

   We choose $\lambda$ large enough that when $\Lv U_n$ and $\Lv V_n$ are larger than $N$, we have
   \begin{align*}
      d(\Ax U_n,\Ax V_n) &\leq K(2\epsilon) &\angle_{\side{L_n}}(\Ax U_n,\Ax V_n) &\geq \pi - K(2\epsilon)
   \end{align*}
   where we choose $\epsilon = E([\rho])$ optimal. In particular, by Lemma \ref{lm:additivityForRHHfarFromNarrow} applied to $\hex_n$ with $S = U_n$ if $(U_{n+1},V_{n+1}) = L(U_n,V_n)$, and $S = V_n$ otherwise, for $n$ large enough
   \begin{align*}
      \abs{\geom{\hex_n}{U_nV_n} + \geom{\hex_n}{U_n} - \geom{\hex_n}{V_n}} &\leq 2\epsilon &&\text{if $(U_{n+1},V_{n+1}) = L(U_n,V_n)$} \\
      \abs{\geom{\hex_n}{U_nV_n} + \geom{\hex_n}{V_n} - \geom{\hex_n}{U_n}} &\leq 2\epsilon &&\text{if $(U_{n+1},V_{n+1}) = R(U_n,V_n)$}
   \end{align*}
   By the half-length property, this implies that for $n$ large enough, if $(U_{n+1},V_{n+1}) = L(U_n,V_n)$, then
   \begin{align*}
      \abs{\trans{U_nV_n} + \trans{U_n} - \trans{V_n}} &\begin{multlined}[t]\leq \abs{\trans{U_nV_n}-2\geom{\hex_n}{U_nV_n})} + 4\epsilon\\
         \shoveright{+ \abs{\trans{U_n}-2\geom{\hex_n}{U_n}}}\\
         +\abs{\trans{V_n}-2\geom{\hex_n}{V_n}}
      \end{multlined}\\
      &\leq 3\left(\frac13 \transhex{\hex_n} - 2\epsilon\right) + 4\epsilon\\
      &\leq \transhex{\hex_n} - 2\epsilon
   \end{align*}
   and similarly, if $(U_{n+1},V_{n+1}) = R(U_n,V_n)$, we have when $n$ is large enough
   \begin{equation*}
      \abs{\trans{U_nV_n} + \trans{V_n} - \trans{U_n}} \leq \transhex{\hex_n} - 2\epsilon
   \end{equation*}
   This leads to a contradiction because it proves that the sequences $(\trans{U_n})_n$ and $(\trans{V_n}))_n$ are decreasing and both diverge to $-\infty$. Indeed, we see that if $(U_{n+1},V_{n+1}) = L(U_n,V_n)=(U_n,U_nV_n)$, then $\trans{V_{n+1}}\leq\trans{V_n}-\trans{U_n} + \transhex{\hex_n}-2\epsilon \leq \trans{V_n} - 2\epsilon$, while $\trans{U_{n+1}} = \trans{U_n}$ remains constant. Similarly, if $(U_{n+1},V_{n+1})=R(U_n,V_n)$, then $\trans{U_{n+1}}\leq\trans{U_n}-2\epsilon$ and $\trans{V_{n+1}}=\trans{V_n}$. Hence, point \eqref{item:LvGoesToInfty} above forces both sequences to decrease infinitely often by at least $2\epsilon$, so they must diverge to $-\infty$.
\end{proof}

From now on, $\lambda$ is supposed to be large enough that the last proposition holds. This defines the function $ \Lc_d$ that we need in Theorem \ref{thm:BQ+HLinPS}, which depends only on $\PrimSys([\rho])$ and $E([\rho])$. The fact that $\Lc_d$ is a non-increasing function of both $\PrimSys([\rho])$ and $E([\rho])$ comes from the fact that $\mu\mapsto R(\mu)$ and $\mu\mapsto K(\mu)$ are respectively decreasing and increasing. Since $E([\rho])\leq \frac16\PrimSys([\rho])$ by Equation \eqref{eq:HLP}, if $\PrimSys([\rho])\to 0$, then $E([\rho])\to 0$ as well, so $\Lc_d([\rho])\to + \infty$ as $R(\mu)\to +\infty$ and $K(\mu)\to 0$ when $\mu\to 0$.

\subsection{Hyperplanes ordering}\label{par:propO}

A key observation in Lee and Xu's proof is that it is possible to order the translates of a hyperplane by the elements of $\Ax_{(u,v)} w$ where $(u,v)$ is a high level Farey edge and $w\in V^+(u,v)$ is a positive word in $(u,v)$. This relies on the following property.

\begin{definition}
   A Farey edge $(U,V)$ is said to have \emph{property $\mathcal{O}$} if there exists a hyperplane $\mathcal{H}$ such that for all $S,T\in\{U,V\}$, the three hyperplanes $S^{-1}\mathcal{H}$, $\mathcal{H}$ and $T\mathcal{H}$ are pairwise disjoint and $\mathcal{H}$ separates the other two.
\end{definition}

We will simply write $S^{-1}\mathcal{H},\mathcal{H},T\mathcal{H}$ to symbolize the above relations between those three hyperplanes.

We now show the connection between narrow bases and bases having property $\mathcal{O}$.

\begin{proposition}\label{prop:narrowImplyPropO}
   If $(U,V)$ is a narrow basis of level larger than $N$, then both $L(U,V)$ and $R(U,V)$ satisfy property $\mathcal{O}$.
\end{proposition}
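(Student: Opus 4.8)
The plan is to treat $L(x,y)$ in detail; the case of $R(x,y)$ follows by the symmetry interchanging the roles of $x$ and $y$ (hence of $L$ and $R$). Write $(X',Y'):=L(x,y)$, so $X'=X$ and $Y'=XY$. The word $xy$ is the Christoffel word attached to the Farey mediant of $(x,y)$, so it has level one more than $(x,y)$, in particular $>N$; hence $\trans{Y'}=\trans{\rho(xy)}>\lambda$, whereas $\trans{X'}$ need not be large. Note also that $\Ax Y'=\Ax XY$ and that the $L$-side of the hexagon $\hex':=\hex(X',Y')$ is $\Perp(\Ax X,\Ax XY)=\side[\hex(X,Y)]{X}$; I abbreviate the sides and orthogonal hyperplanes of $\hex'$ as $\side{L'},\side{X'},\hyp{L'}{X'},\hyp{Y'}{Y'},\dots$ in the sense of Definition~\ref{def:tecNotations}. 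Since $(x,y)$ is narrow of level $>N$, Corollary~\ref{cor:heredityNarrow} gives that $L(x,y)$ is narrow, still of level $>N$, so by the half-length property $\geom{\hex'}{X'},\geom{\hex'}{Y'}\geq\tfrac13\transhex{\hex'}\geq\tfrac13\PrimSys(\rho)>\tfrac16\PrimSys(\rho)$; together with $\trans{Y'}>\lambda\geq 6R(\tfrac16\PrimSys(\rho))$ this furnishes all the metric input needed to apply Lemma~\ref{lm:hypNotInt} with $\mu=\tfrac16\PrimSys(\rho)$.

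The hyperplane I would use to witness property $\mathcal O$ for $(X',Y')$ is $\mathcal H:=\hyp{L'}{X'}$, orthogonal to $\Ax X'$ and passing through $\side{L'}$. The triple $(X')^{-1}\mathcal H,\ \mathcal H,\ X'\mathcal H$ is immediate: being orthogonal to $\Ax X'$, these hyperplanes occupy the $\Ax X'$-coordinates $-\trans{X'},0,\trans{X'}$, hence are pairwise disjoint with $\mathcal H$ between, for any $\trans{X'}>0$ --- this is exactly why $\mathcal H$ is taken orthogonal to the axis of the possibly short generator. Narrowness of $(X',Y')$, read as $(Y')^-\in\ohem{L'}{X'}$ (equivalently $\angle_{\side{L'}}(\Ax X',\Ax Y')<\tfrac\pi2$), forces $\Ax Y'$ to cross $\mathcal H$ transversally, at $\side{L'}\cap\Ax Y'$, with $(Y')^-$ lying in $\ohem{L'}{X'}$ and $(Y')^+$ on the opposite side of $\mathcal H$ --- the same side as $(X')^+$, while $(X')^-$ lies in $\ohem{L'}{X'}$. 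So $\mathcal H$ separates $\{(X')^-,(Y')^-\}$ from $\{(X')^+,(Y')^+\}$.

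What remains is to upgrade this separation of endpoints to the separation of hyperplanes demanded by property $\mathcal O$: for each $(U,V)\in\{X',Y'\}^2$, the triple $U^{-1}\mathcal H$, $\mathcal H$, $V\mathcal H$ must be pairwise disjoint with $\mathcal H$ between. Each remaining pair (those involving $Y'$) I would treat with Lemma~\ref{lm:hypNotInt}, taking $\Delta$ a line contained in $\mathcal H$ --- e.g.\ $\side{L'}$, which also lies in $\hyp{L'}{Y'}$ --- and $\Q$ the relevant translate $V\mathcal H$ or $U^{-1}\mathcal H$: the long loxodromic $Y'$ moves $\Q$ a distance comparable to $\trans{Y'}>6R(\mu)$ off $\Delta$, so $d(\Delta,\Q)\geq R(\mu)$, whereas the ``hexagon'' distances $d(\Delta,\hyp{X'}{X'})$ and $d(\Delta,\hyp{Y'}{Y'})$ are geometric side-lengths, hence $\geq\mu$. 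The key point is that the acute angle $\angle_{\side{L'}}(\Ax X',\Ax Y')<\tfrac\pi2$ given by narrowness translates, after applying $Y'$ or $(Y')^{-1}$ and reading off the appropriate common perpendiculars, into either the incidence hypothesis of Lemma~\ref{lm:hypNotInt}~(\ref{item:lm1}) or the obtuse-angle hypothesis of Lemma~\ref{lm:hypNotInt}~(\ref{item:lm2}) --- the backward translate $(Y')^{-1}\mathcal H$ lies behind $\mathcal H$, making an obtuse angle with the forward direction, and symmetrically for $Y'\mathcal H$. Running this for all four pairs yields property $\mathcal O$ for $L(x,y)$; the case $R(x,y)=(X'',Y'')$ is identical with $\mathcal H:=\hyp{L''}{Y''}$, orthogonal to the axis of the possibly short generator $Y''$ (here $X''=\rho(xy)$ plays the role of the long one).

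I expect the main obstacle to be precisely this last step. All the ingredients are on hand, but for each of the four pairs one must choose the triple $(\Delta,\P',\Q)$ so that narrowness genuinely produces the required incidence or the $>\tfrac\pi2$ angle in Lemma~\ref{lm:hypNotInt}, and keep every orientation compatible so that ``$\mathcal H$ separates the other two'' holds simultaneously in all cases. The delicate feature is that $\mathcal H$ is orthogonal to $\Ax X'$ but only transverse to $\Ax Y'$, at an angle that narrowness pins down solely through the strict inequality $<\tfrac\pi2$, so one cannot reduce to an orthogonality argument on the $Y'$-side and must instead combine the half-space formulation of narrowness with the largeness of $\trans{Y'}$. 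The metric estimates (geometric side-lengths $\geq\tfrac13\PrimSys(\rho)$ from the half-length property, displacement $>\lambda$ from the level being $>N$) are uniform and routine; the content is the combinatorial-geometric verification that narrowness is exactly what makes Lemma~\ref{lm:hypNotInt} applicable.
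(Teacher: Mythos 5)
You choose the same witness hyperplane as the paper, $\mathcal H=\hyp{L'}{X'}$, and the triple $X'^{-1}\mathcal H,\mathcal H,X'\mathcal H$ is indeed immediate; but the step you yourself flag as the main obstacle --- the pairs involving $Y'$ --- is the entire content of the proposition, and the mechanism you propose for it does not work. You want to apply Lemma \ref{lm:hypNotInt} with $\Delta=\side{L'}$ and $\Q=Y'\mathcal H$ (or $Y'^{-1}\mathcal H$), justifying $d(\Delta,\Q)\geq R(\mu)$ by ``$Y'$ moves $\Q$ a distance comparable to $\trans{Y'}$ off $\Delta$''. That bound is not available: $Y'\mathcal H$ is orthogonal to $Y'\Ax X'$, not to $\Ax Y'$, so while the point $Y'(\side{L'}\cap\Ax Y')$ is at distance $\trans{Y'}$ from $\side{L'}$, the hyperplane through it can tilt back and pass arbitrarily close to (or even meet) $\side{L'}$; a lower bound on $d(\side{L'},Y'\mathcal H)$ is exactly the kind of statement property $\mathcal O$ is meant to deliver, so assuming it is circular. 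The same issue affects your intermediate hyperplane: to take $\P'=\hyp{Y'}{Y'}$ you need $d(\P,\P')=d(\Delta,\P')\geq\mu$, and in particular $\hyp{L'}{X'}\cap\hyp{Y'}{Y'}=\varnothing$, which is again one of the facts to be proven. Note also that $\Ax Y'$ crossing $\mathcal H$ at $\side{L'}\cap\Ax Y'$ is automatic (the common perpendicular $\side{L'}$ always meets $\Ax Y'$ and lies in $\mathcal H$), so the narrowness of $(X',Y')$ as you use it carries little force.

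The paper's proof supplies precisely the missing mechanism. It only ever applies Lemma \ref{lm:hypNotInt} to hyperplanes orthogonal to $\Ax Y'$, namely $\Q\in\{\hyp{Y'}{Y'},\,Y'\hyp{Y'}{Y'}\}$, for which $d(\side{L'},\Q)$ is realized along $\Ax Y'$ and bounded below by the half-length property together with $\Lv Y'\geq N$; and it manufactures the intermediate hyperplane as an auxiliary $H'$ orthogonal to $\Ax X$, midway between $\hyp{L}{X}$ and $\hyp{X}{X}$ in the \emph{old} hexagon $\hex(X,Y)$. One first proves $H'\cap\hyp{Y}{Y}=\varnothing$ by case (\ref{item:lm2}) of the lemma, with a case distinction according to whether $\Lv X$ or $\Lv Y$ realizes $\Lv(X,Y)$ (this is where the level hypothesis enters), and it is the narrowness of $(X,Y)$ --- not of $L(x,y)$ --- that then forces $\Ax Y'$ to cross $H'$. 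Two applications of case (\ref{item:lm1}) with $\P'=H'$ yield $\hyp{L'}{X'}\cap\hyp{Y'}{Y'}=\varnothing$ and $\hyp{L'}{X'}\cap Y'\hyp{Y'}{Y'}=\varnothing$; these trap $\mathcal H$ in the open slab between $\hyp{Y'}{Y'}$ and $Y'\hyp{Y'}{Y'}$, and all four separation triples then follow from this slab containment together with the orthogonality of $\mathcal H$ to $\Ax X'$ and the side information coming from narrowness. So your outline correctly identifies the hyperplane and the easy triple, but the disjointness statements at the heart of the proposition are not established, and the direct route you sketch in their place is unsound.
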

\begin{proof}
Write $(U',V') = L(U,V)$, $\hex=\hex(U,V)$ and $\hex'=\hex(U',V')$. We define $H'$ to be the hyperplane orthogonal to $\Ax U$ in the middle of the segment delimited by $\hyp{L}{U}$ and $\hyp{U}{U}$. We start by using repeatedly Lemma \ref{lm:hypNotInt} to show
   \begin{description}[style=nextline, leftmargin=.7cm]
      \item[$\bm{H'\cap \hyp{V}{V} = \varnothing}$]
         If $\Lv U = \Lv (U,V)$, then $(\Delta,\P,\P',\Q) = (\side{L},\hyp{L}{V},\hyp{V}{V},H')$ yields the result, as $d(H_L,\hyp{V}{V})=d(\hyp{L}{V},\hyp{V}{V}) = \geom{\hex}{V}\geq \frac13\trans{V}+\epsilon\geq \frac16\PrimSys([\rho])$, and $d(H_L,H') = \frac12 d(H_L,\hyp{U}{U}) =\frac12\geom{\hex}{U}\geq\frac 16\trans{U} + \frac12\epsilon\geq R(\frac16\PrimSys([\rho]))$ since $\Lv U\geq N$. If $\Lv V = \Lv (U,V)$, $(\Delta,\P,\P',\Q) = (\side{L},\hyp{L}{U},H',\hyp{V}{V})$ is the right choice, as $d(H_L,H') = d(\hyp{L}{U},H')\allowbreak = \frac12\geom{\hex}{U}\geq\frac16\trans{U} + \epsilon\geq\frac16\PrimSys([\rho])$, and $d(H_L,\hyp{V}{V}) = \geom{\hex}{V}\geq\frac13\trans{V} + \epsilon\geq R(\frac16\PrimSys([\rho]))$ since $\Lv V\geq N$. In both cases, the narrowness of $(U,V)$ allows to use Lemma \ref{lm:hypNotInt}.\eqref{item:lm2} to conclude.

         Observe that this implies that $\Ax V'\cap H'\neq\varnothing$ as $\Ax V' = \Perp(H_V,H_U)$ and $H'$ separates $H_U\subset\hyp{U}{U}$ and $H_V\subset\hyp{V}{V}$ since $(U,V)$ is narrow.
\item[$\bm{\hyp{L'}{U'}\cap\hyp{V'}{V'}=\varnothing}$]
   We check similarly that we can apply Lemma \ref{lm:hypNotInt}.\eqref{item:lm1} with $(\Delta,\P,\P',\Q) = (\side{L'},\hyp{L'}{U'},H',\hyp{V'}{V'})$ to get the expected result, since $\Lv V' = \Lv(U',V')\geq N$ and $\Ax V'\cap H'\neq\varnothing$.
\item[$\bm{\hyp{L'}{U'}\cap V'\hyp{V'}{V'}=\varnothing}$]
   Again, Lemma \ref{lm:hypNotInt}.\eqref{item:lm1} applied to $(\Delta,\P,\P',\Q) = (\side{L'},\hyp{L'}{U'},H',V'\hyp{V'}{V'})$ gives the result. Indeed, we have as before that $d(H_{L'},H') = d(\hyp{L'}{U'},H') = d(\hyp{U}{U},H')\allowbreak = \frac12\geom{\hex}{U}\geq\frac16\trans{U} + \epsilon\geq \frac16\PrimSys([\rho])$, and we also have $d(H_{L'},V'\hyp{V'}{V'}) = \trans{V'} - d(H_{L'},\hyp{V'}{V'}) = \trans{V'} - \geom{\hex'}{V'} \geq \frac13\trans{V'} + \epsilon\geq R(\frac16\PrimSys([\rho]))$ since $\Lv V'\geq N$. Moreover, $\Ax V'\cap H'\neq\varnothing$ allows to use case \eqref{item:lm1} since $\Ax V' = \Perp(H_{L'},\hyp{V'}{V'}) = \Perp(H_{L'},V'\hyp{V'}{V'})$.
\end{description}

The last two results give $V'^{-1}\hyp{L'}{U'}\cap\hyp{L'}{U'} = \varnothing$, so that $U'\hyp{L'}{U'},\hyp{L'}{U'},V'^{-1}\hyp{L'}{U'}$.

Since $V'^{-1}\hyp{L'}{U'}\cap\hyp{L'}{U'} = \varnothing$, we also have $\hyp{L'}{U'}\cap V'\hyp{L'}{U'} = \varnothing$, so that $V'\hyp{L'}{U'},\hyp{L'}{U'},\allowbreak U'^{-1}\hyp{L'}{U'}$ as well. We trivially have that $U'\hyp{L'}{U'},\hyp{L'}{U'},U'^{-1}\hyp{L'}{U'}$, and $V'^{-1}\hyp{L'}{U'},\hyp{L'}{U'},\allowbreak V'\hyp{L'}{U'}$ follows from $V'^{-1}\hyp{L'}{U'}\cap\hyp{L'}{U'} = \varnothing$ and $\hyp{L'}{U'}\cap V'\hyp{L'}{U'} = \varnothing$. Hence, $L(U,V)$ has property $\mathcal{O}$.

We prove in a similar way that $R(U,V)$ satisfies $\mathcal{O}$.
\end{proof}
We get as a corollary (using Proposition \ref{prop:finitenessNonAcute})
\begin{corollary}\label{cor:finitenessNonO}
All but finitely many Farey edges satisfy property $\mathcal{O}$.
\end{corollary}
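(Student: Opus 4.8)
The plan is to bootstrap from Proposition \ref{prop:finitenessNonAcute} (all but finitely many Farey edges are narrow) and Proposition \ref{prop:narrowImplyPropO} (a narrow basis of level $> N$ has both Christoffel successors satisfying property $\mathcal{O}$), via a routine covering argument on the Farey tree. First I would fix $\lambda$ large enough that Proposition \ref{prop:finitenessNonAcute} applies, so that the set $\mathcal{N}$ of Farey edges that are \emph{not} narrow is finite. Since every Farey edge of level $> N$ that fails property $\mathcal{O}$ must, by (the contrapositive of) Proposition \ref{prop:narrowImplyPropO}, be of the form $L(x,y)$ or $R(x,y)$ for some Farey edge $(x,y)$ that is either of level $\le N$ or not narrow, the non-$\mathcal{O}$ edges of level $> N$ are contained in the set of immediate successors of elements of the finite set (edges of level $\le N$) $\cup\,\mathcal{N}$. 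That set of successors is finite (each Farey edge has exactly two successors, $L$ and $R$), and together with the finitely many Farey edges of level $\le N$ it exhausts all edges that can fail property $\mathcal{O}$.

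Concretely, I would argue: let $\mathcal{F}$ be the set of Farey edges failing property $\mathcal{O}$. If $(x,y)\in\mathcal{F}$ has level $\le N$ there are only finitely many such. If $(x,y)\in\mathcal{F}$ has level $> N$, write its predecessor in the Farey/Christoffel construction as $(x_0,y_0)$, so that $(x,y)\in\{L(x_0,y_0),R(x_0,y_0)\}$ and $\Lv(x_0,y_0)=\Lv(x,y)-1$. If moreover $\Lv(x_0,y_0)> N$, then by Proposition \ref{prop:narrowImplyPropO} the predecessor $(x_0,y_0)$ cannot be narrow — otherwise both its successors, in particular $(x,y)$, would have property $\mathcal{O}$. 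Hence $(x_0,y_0)\in\mathcal{N}$. Therefore every element of $\mathcal{F}$ is either of level $\le N+1$, or is a successor of an element of $\mathcal{N}$. Both of these are finite sets (the first because there are finitely many Farey edges of bounded level, the second because $\mathcal{N}$ is finite and each edge has two successors), so $\mathcal{F}$ is finite.

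This argument is entirely combinatorial once Propositions \ref{prop:finitenessNonAcute} and \ref{prop:narrowImplyPropO} are in hand, so I expect no real obstacle here — the only mild subtlety is the bookkeeping at the threshold level $N$ (one must allow level $\le N+1$ rather than $\le N$, since the hypothesis of Proposition \ref{prop:narrowImplyPropO} requires the \emph{predecessor} to have level $> N$). The substantive content has already been absorbed into the earlier propositions, in particular into Proposition \ref{prop:finitenessNonAcute}, whose proof is where the half-length property and the choice of $\lambda_d$ are genuinely used.
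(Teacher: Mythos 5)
Your argument is correct and is exactly the route the paper intends: the corollary is stated as an immediate consequence of Propositions \ref{prop:finitenessNonAcute} and \ref{prop:narrowImplyPropO}, and your covering argument on the Farey tree (edges of level at most $N+1$, plus successors of the finitely many non-narrow edges) is just the bookkeeping the paper leaves implicit. The remark about needing level $\le N+1$ at the threshold is the right subtlety to flag.
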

From now on, the proof is the same as the one given by Lee and Xu. We include it for the sake of completeness.

\subsection{End of the proof (from Lee--Xu)}
\begin{fact}[{see \cite[\Thm 5.4]{LeeXu2020}}]\label{fact:highLv} If $f=(x,y)\in E$ has property $\mathcal{O}$, then there exist positive constants $m_f$ and $c_f$ and a point $o_f\in \H$ such that for every positive word $w$ in $\{x,y\}$ the inequality
\begin{equation*}
m_f d_e(u,v)-c_f\leq d(\rho(u)o_f,\rho(v)o_f)
\end{equation*}
holds for every $u,v\in\Ax w$.
\end{fact}
\begin{proof}
Choose a point $o_f\in\mathcal{H}$, and a word $w$ positive in $\{x,y\}$. $w$ is positive in $f$, so that if $g,h\in\Ax_f w$ are consecutive (in this order), then $g^{-1}h\in\{x,y\}$. In fact, if $w = w_1\cdots w_n$ where $w_i\in\{x,y\}$ for all $i$, then 
\begin{equation*}
\Ax_f w = \{\ldots,w^{-1}w_n^{-1},w^{-1},\ldots,w_n^{-1}w_{n-1}^{-1},w_n^{-1},1,w_1,w_1w_2,\ldots,w,ww_1,\ldots\}
\end{equation*}

Hence, for all $v\in\Ax_f(w)$, we find $g,h\in\{x,y\}$ such that $(vg^{-1},v,vh)\subset \Ax_f w$. Property $\mathcal{O}$ then tells us that $\rho(vg^{-1})\mathcal{H},\rho(v)\mathcal{H},\rho(vh)\mathcal{H}$, which implies that $\rho(\Ax_f w)\mathcal{H}$ is a well ordered set of pairwise disjoint hyperplanes.

Thus, if we write $m = \min_{u\in\{x,y\}}d(\mathcal{H},\rho(u)\mathcal{H})$, we see that
\begin{equation*}
md_f(u,v)\leq d(\rho(u)o_f,\rho(v)o_f)
\end{equation*}
for all $u,v\in\Ax_f w$.
The triangle inequality then allows to change $d_f$ into $d_e$ and $\Ax_f w$ into $\Ax w$.
\end{proof}

The proof of Theorem \ref{thm:BQ+HLinPS} now follows easily.
\begin{proof}[Proof of Theorem \ref{thm:BQ+HLinPS}]
Take $\rho: F_2\to \Isom(\H)$ a representation that satisfies both the $Q$-conditions and the half-length property, and choose a point $o\in\H$. We know from Corollary \ref{cor:finitenessNonO} that there exists $N$ such that if $[x,y]\in E$ is a Farey edge with $\Lv[x,y]\geq N$, then $[x,y]$ satisfies $\mathcal{O}$.

Denote by $\{f_i\}_{1\leq i\leq 2^{N+1}}$ the set of level $N$ Farey edges. Then we can write
\begin{equation*}
V = V_{< N}\cup V^+[f_1]\cup\cdots\cup V^+[f_{2^{N+1}}]
\end{equation*}
where $V_{<N}$ is the (finite) set of Christoffel words of level smaller than $N$. For all integers $i\in[1;2^{n+1}]$, let $o_i\in \H$ and $m_i,c_i>0$ be the point and constants given by Fact \ref{fact:highLv} for the Farey edge $f_i$, and write $r_i = d(o,o_i)$. We see with the triangle inequality that
\begin{align*}d(\rho(u)o,\rho(v)o) &\geq d(\rho(u)o_i,\rho(v)o_i)-d(\rho(u)o,\rho(u)o_i)-d(\rho(v)o,\rho(v)o_i)\\
&\geq m_id_e(u,v)-c_i-2r_i
\end{align*}
for all $u,v\in\Ax w$ where $w\in V^+[f_i]$.

Now, if $w\in V_{<N}$, since $\rho(w)$ is hyperbolic, we can find positive constants $m_w$ and $c_w$ such that
\begin{equation*}
m_wd_e(u,v)-c_w\leq d(\rho(u)o,\rho(v)o)
\end{equation*}
for all $u,v\in\Ax w$.

Thus, if we write 
\begin{align*}
m &=\min\{m_w,w\in V_{<N}\}\cup\{m_i,1\leq i\leq 2^{N+1}\}\\
c &=\max\{c_w,w\in V_{<N}\}\cup\{c_i+2r_i,1\leq i\leq 2^{N+1}\}
\end{align*}
we get that for every $w\in V$,
\begin{equation*}
md_e(u,v)-c\leq d(\rho(u)o,\rho(v)o)
\end{equation*}
for all $u,v\in\Ax w$.
\end{proof}

Theorems \ref{thm:PSisPDisBQ} and \ref{thm:PSisBQCox} follow.
\begin{proof}[Proof of Theorems \ref{thm:PSisPDisBQ} and \ref{thm:PSisBQCox}]
   Combining Propositions \ref{prop:PSinPDinBQ} and \ref{prop:BQisIrr} with Theorem \ref{thm:BQ+HLinPS} proves theorem \ref{thm:PSisPDisBQ}.

   Theorem \ref{thm:PSisBQCox} is an easy corollary of Theorem \ref{thm:PSisPDisBQ}.
\end{proof}

\section{The bounded intersection property}\label{sec:BIP}

We can also prove Theorem \ref{thm:PSinBIP} using the same proof as that of \cite[\Thm II]{LeeXu2020}. We do not give here all of the definitions and details concerning the palindromic primitive words and the bounded intersection property, and refer to Section 6 of \cite{LeeXu2020} for full details.

Lee and Xu start by modifying the choice of primitive words representatives as in \cite{GilmanKeen2011} so that all representatives with odd word length are palindromic in the basis $(a,b)$. This amounts to the following modifications in the definitions of the transformations $L$ and $R$:

\begin{definition}
   If $(u,v)$ is a Farey edge, then
   \begin{align*}
      L(u,v) &= \begin{cases}
         (u,vu) &\text{if $|uv|_e$ is even}\\
         (u,uv) &\text{otherwise}
      \end{cases} &
      R(u,v) &= \begin{cases}
         (vu,v) &\text{if $|uv|_e$ is even}\\
         (uv,v) &\text{otherwise}
      \end{cases}
   \end{align*}
\end{definition}

Since $c = (ab)^{-1}$ has an even size in the basis $e$, it follows that applying this algorithm for each of the bases $(a,b)$, $(b,c)$ and $(c,a)$ gives 3 different representatives to each element of $V$, two of which are palindromes (see \cite[\Lm 6.6]{LeeXu2020} and the following sentence). In order to ease the redaction, we will say that a primitive element $w$ is $L$-palindromic if it is palindromic in $(a,b)$, $A$-palindromic if it is palindromic in $(c,a)$, and $B$-palindromic if it is palindromic in $(b,c)$.

Building on this observation and \cite{GilmanKeen2009}, both Lee and Xu and Series define the sets $J_L$, $J_A$ and $J_B$ as follows in the case where $\rho: F_2\to\PSL_2(\C)$, relying on \cite[\Lm 6.8]{LeeXu2020} and \cite[\Lm 5.1]{GilmanKeen2009} (see also \cite[\Rmk 6.9]{LeeXu2020}).
\begin{align*}
   J_L &= \{H_L\cap\Ax\rho(w),\text{ for $w\in V$ $L$-palindromic}\}\\
   J_A &= \{H_A\cap\Ax\rho(w),\text{ for $w\in V$ $A$-palindromic}\}\\
   J_B &= \{H_B\cap\Ax\rho(w),\text{ for $w\in V$ $B$-palindromic}\}
\end{align*}

An irreducible representation $\rho: F_2\to\PSL_2(\C)$ has the bounded intersection property (for Lee and Xu) if each of the associated sets $J_A$, $J_B$ and $J_L$ are bounded and all primitive elements are sent to hyperbolic isometries.

However, \cite[\Lm 6.8]{LeeXu2020} does not hold \emph{a priori} in our context for representations that are not Coxeter extensible, so $J_L$, $J_A$ and $J_B$ are no longer well-defined. Thus, we need to reinterpret them in order to get a new definition that makes sense in general.

We start by observing that when $\rho$ is Coxeter extensible and the fixed point sets $F_A$, $F_B$ and $F_L$ are of dimension $1$ (we can always suppose it is the case when $\rho:F_2\to \PSL_2(\C)$, by \cite[\pp 47]{Fenchel1989}), they correspond exactly to $\side{A}$, $\side{L}$ and $\side{B}$.

In this situation, the set $J_L$ is bounded if and only if there exists $M$ such that for all primitive element $w$ palindromic in $(a,b)$, $d(o_L,\Ax\rho(w))\leq M$ where $o_L = \side{L}\cap \Ax A$. Indeed, $a$ is primitive and palindromic in $(a,b)$, hence $o_L\in J_L$, and we know from \cite[\Lm 6.8]{LeeXu2020} that all the intersections are orthogonal along $\side{L} = \Ax I_L$ (hence the distance from $o_L$ to $\Ax\rho(w)$ for $w$ as above is realized along $\side{L}$).

The same argument shows that $J_A$ (\resp $J_B$) is bounded if and only if there exists $M$ such that for any primitive element $w$ palindromic in $(c,a)$ (\resp $(b,c)$), $d(o_A,\Ax\rho(w))\leq M$ (\resp $d(o_B,\Ax\rho(w))\leq M$), where $o_A =\side{A}\cap\Ax AB$ and $o_B = \side{B}\cap\Ax B$.

This motivates our new definition:
\begin{definition}\label{def:BIP}
   A representation $\rho: F_2\to\Isom(\H)$ satisfies the \emph{bounded intersection property} if
\begin{enumerate}
\item $\rho$ is irreducible;
\item the images of primitive elements are hyperbolic;
\item there exists $M$ such that for all $X\in\{A,L,B\}$, and for any $X$-palindromic primitive element $w$, $d(o_X,\Ax\rho(w))\leq M$.
\end{enumerate}
\end{definition}

\begin{remark}
   Observe that contrary to what is claimed by Series in \cite[\Rmk 1.6]{Series2020}, neither Lee and Xu's result, nor our is flawed. The misunderstanding comes from the fact that Lee and Xu's definition is slightly stronger than Series', who doesn't require that a representation satisfying the bounded intersection property sends primitive elements to hyperbolic elements. As in the rest of this paper, we chose to follow Lee and Xu's definition in this section.
\end{remark}

It is not hard to show, following the proof of \cite[\Prop 6.11]{LeeXu2020}, that this defines an $\Out(F_2)$-invariant subset of the character variety $\chi(F_2,\Isom(\H))$.

We are now able to prove Theorem \ref{thm:PSinBIP}. The proof is the same as that given by Lee and Xu.

\begin{proof}[Proof of Theorem \ref{thm:PSinBIP}]
Suppose $\rho$ is primitive stable. This means that for some point $o\in\H$, the orbit map $\tau_o:u\in F_2\mapsto \rho(u)o$ is uniformly a quasi-isometric embedding in restriction to the axes of primitive elements in $F_2$. Hence, the Morse lemma gives the existence of some $D$ such that for any primitive element $w$, and for any $u\in\Ax_e w$, $d(\rho(u)o,\Ax\rho(w))\leq D$.

Now, suppose without loss of generality that $w$ is $L$-palindromic. Since it is palindromic, it is cyclically reduced, so $1\in\Ax_e w$ by Proposition \ref{prop:AxisInF2}. Hence, $d(o,\Ax\rho(w))\leq D$, and $d(o_L,\Ax\rho(w))\leq D + d(o,o_L)$. Thus, $\rho$ has the bounded intersection property with $M = D + \max\{d(o,o_L),d(o,o_A),d(o,o_B)\}$ ($\rho$ satisfies items (i) and (ii) by Propositions \ref{prop:PSinPDinBQ} and \ref{prop:BQisIrr}).

\medskip

Now suppose $\rho$ has the bounded intersection property and is discrete. The images of primitive elements are hyperbolic by Definition \ref{def:BIP}. Choose any $\lambda >0$, and suppose $\{w\in V, \trans{\rho(w)}\leq \lambda\}$ is infinite. Without loss of generality, we may suppose that
\begin{equation*}
W = \{\text{$w\in V$ is $L$-palindromic}, \trans{\rho(w)}\leq \lambda\}
\end{equation*}
is infinite as well.

But for all $w\in W$, there exists $n\in\Ax\rho(w)$ such that $d(o_L,n)\leq M$ by the bounded intersection property. Hence,
\begin{align*}
d(o_L,\rho(w)o_L) &\leq d(o_L,n) + d(n,\rho(w)n) + d(\rho(w)n,\rho(w)o_L)\\
&\leq 2M + \lambda
\end{align*}
for every $w\in W$. This contradicts the discreteness of $\rho$, so $\rho$ satisfies the $Q_\lambda$-conditions for every $\lambda > 0$.
\end{proof}

\section{Further questions}\label{sec:furtherQuestions}

Little is known about the half-length property apart from the fact that Coxeter extensible representations satisfy it. It is to be thought of as some sort of weakening of being Coxeter extensible, and the hope is that it defines at least a neighborhood of the Coxeter extensible locus. Computer experiments seem to suggest that a lot of Schottky representations satisfy it, but it remains unclear how to prove this.

One could also consider generalizations of this proof in different settings. In this direction, one can observe that besides the half-length property, the proof relies on two sets of assumptions. The first one is that $X$ needs to be Gromov hyperbolic for the most critical parts of the argument (see Propositions \ref{prop:LnRn} and \ref{prop:finitenessNonAcute}). It is not hard to convince oneself that if the space is hyperbolic, much of the geometric definitions and properties involving orthogonality are easily adapted by considering closest point projections.

Almost every part of the argument goes through in this setting, in fact, apart from Lemma \ref{lm:hypNotInt}, which fails in the case of trees, because the orthogonal hyperplanes get “too thick”. An easy way to make sure this lemma holds is to work with Gromov hyperbolic spaces for which a similar picture is valid. In particular, we want these orthogonal hyperplanes to be somewhat “thin”.

A plausible candidate would be Gromov hyperbolic Hilbert geometries. In fact, Benoist's characterization of Gromov hyperbolic Hilbert geometries as quasi\-sym\-metric convex sets (see \cite[\Thm 1.4]{Benoist2003}) gives good hope that one could in this case use much of the argument exposed here as is. Indeed, it seems that \cite[\Prop 1.8]{Benoist2003} implies that we can define a uniform notion of “rough polars” to projective hyperplanes, which we may use to approximate closest point projections of a point onto this hyperplane. It seems plausible that the same proof would work in this setting.

\printbibliography

\end{document}